\documentclass[11pt,a4paper]{article}
\usepackage{amssymb,amsmath,amsfonts,amsthm,array,bm,color}
\usepackage{amsmath}
\setcounter{MaxMatrixCols}{30}
\usepackage{amsfonts}
\usepackage{mathrsfs}
\usepackage{graphicx}

\setlength{\hoffset}{-0.4mm} \setlength{\voffset}{-0.4mm}
\setlength{\textwidth}{158mm} \setlength{\textheight}{235mm}
\setlength{\topmargin}{0mm} \setlength{\oddsidemargin}{0mm}
\setlength{\evensidemargin}{0mm} \setlength\arraycolsep{1pt}
\setlength{\headsep}{0mm} \setlength{\headheight}{0mm}

\newcommand{\R}{\mathbb R}

\newcommand{\Pp}{\mathbb P}
\newcommand{\Ee}{\mathbb E}

\newcommand{\I}{{\bf 1}}

\def\<{\langle}
\def\>{\rangle}
\def\essinf{\mathop{\rm ess\,inf}}

\newcommand{\var}{\textmd{Var}}
\newcommand{\Bb}{\mathscr{B}}

\newcommand{\scalar}[1]{\langle#1\rangle}

\newtheorem{theorem}{Theorem}[section]
\newtheorem{lemma}[theorem]{Lemma}
\newtheorem{proposition}[theorem]{Proposition}
\newtheorem{corollary}[theorem]{Corollary}

\theoremstyle{definition}

\newtheorem{example}[theorem]{Example}
\newtheorem{remark}[theorem]{Remark}

\begin{document}

\allowdisplaybreaks

\title{\bfseries Refined basic couplings and Wasserstein-type distances for SDEs with L\'{e}vy noises}

\author{Dejun Luo\footnote{Email: luodj@amss.ac.cn. RCSDS, Academy of Mathematics and Systems Science, Chinese Academy of Sciences, Beijing 100190, China, and School of Mathematical Sciences, University of the Chinese Academy of Sciences, Beijing 100049, China. } \ and Jian Wang\footnote{Email: jianwang@fjnu.edu.cn. College of Mathematics and Informatics \& Fujian Key Laboratory of Mathematical Analysis and Applications (FJKLMAA), Fujian Normal University, 350007, Fuzhou, P.R. China.} }

\date{}

\maketitle

\makeatletter

\renewcommand\theequation{\thesection.\arabic{equation}}
\@addtoreset{equation}{section} \makeatother

\begin{abstract}
We establish the exponential convergence with respect  to the $L^1$-Wasserstein distance and the total variation for the semigroup corresponding to the stochastic differential equation
  $$d X_t=d Z_t+b(X_t)\,d t,$$
where $(Z_t)_{t\ge0}$ is a pure jump L\'{e}vy process whose L\'{e}vy measure $\nu$ fulfills
  $$ \inf_{x\in \R^d, |x|\le \kappa_0} [\nu\wedge (\delta_x \ast \nu)]( \R^d)>0$$
for some constant $\kappa_0>0$, and the drift term $b$ satisfies that for any $x,y\in \R^d$,
  $$\langle b(x)-b(y),x-y\rangle\le \begin{cases}
  \Phi_1(|x-y|)|x-y|,&  |x-y|< l_0;\\
  -K_2|x-y|^2,&  |x-y|\ge l_0
  \end{cases}$$
with some positive constants $K_2, l_0$ and positive measurable function $\Phi_1$. The method is based on the refined basic coupling for L\'{e}vy jump processes. As a byproduct, we obtain sufficient conditions for the strong ergodicity of the process $(X_t)_{t\ge0}$.
\medskip

\noindent\textbf{Keywords:} Refined basic coupling;  L\'{e}vy jump process; Wasserstein-type distance; strong ergodicity.

\medskip

\noindent \textbf{MSC 2010:} 60J25; 60J75.
\end{abstract}

\section{Introduction and Main Results}\label{section1}

In this paper we study the following $d$-dimensional stochastic differential equation (SDE) with jumps
  \begin{equation}\label{s1}
  d X_t=b(X_{t})\,d t+ d Z_t,\quad X_0=x\in \R^d,
  \end{equation}
where $b: \R^d\rightarrow\R^d$ is a measurable function, and $Z=(Z_t)_{t\ge0}$ is a pure jump L\'{e}vy process on $\R^d$.

Throughout this paper, we suppose that \emph{the SDE \eqref{s1} has a non-explosive and pathwise unique strong solution, and $b$ satisfies the assumption ${\mathbf B(\Phi_1(r),\Phi_2(r), l_0)}$ that for any $x,y\in\R^d$,
  \begin{equation}\label{th111}
  \frac{\langle b(x)-b(y), x-y\rangle}{|x-y|}\le \Phi_1(|x-y|)-\big[\Phi_1(|x-y|)+\Phi_2(|x-y|)\big]\I_{\{|x-y|\ge l_0\}},
  \end{equation}
where $\Phi_1$ and $\Phi_2$ are two nonnegative measurable functions, and $l_0\ge0$ is a constant}. For example, when $\Phi_2(r)=K_2r$ for some positive constant $K_2$, ${\mathbf B(\Phi_1(r),\Phi_2(r), l_0)}$ is reduced into ${\mathbf B(\Phi_1(r),K_2r, l_0)}$:
  \begin{equation}\label{th11}
  \begin{split}
  \langle b(x)-b(y),x-y\rangle \leq
  \begin{cases}
 \Phi_1(|x-y|)|x-y|, & |x-y|< l_0;\\
  -K_2|x-y|^2, & |x-y|\ge l_0.
  \end{cases}
  \end{split}
  \end{equation}
This holds if the drift term $b$ is dissipative outside some compact set. In particular, when $\Phi_1(r)=K_1 r$ for some constant $K_1\ge0$,  it follows from  \eqref{th11}  that  for any $x\in\R^d$,
  $$ \langle b(x), x\rangle \le \langle b(0), x\rangle+K_1|x|^2\le C_1(1+|x|^2),$$
which, along with \eqref{th11}, yields that the SDE \eqref{s1} has a non-explosive and  pathwise unique strong solution, see \cite[Chapter 6, Theorem 6.2.3]{AP} (in the standard Lipschitz case) or \cite[Theorem 2]{GK} and \cite[Chapter 3, Theorem 115]{RU} (in the one-sided  Lipschitz case). Note that, since we are sometimes concerned with only measurable drift term $b$, non-Lipschitz condition like ${\mathbf B(K_1,K_2r, l_0)}$ will also be adopted in our results below. The reader can refer to \cite{CSZ, P1, P, TTW, Zhang14} and references therein for recent studies on the existence and uniqueness of strong solution to \eqref{s1} with non-regular drift term. In particular, assuming that $Z$ is the truncated symmetric $\alpha$-stable process on $\R^d$ with $\alpha\in (0,2)$, and $b$ is bounded and $\beta$-H\"older continuous with $\beta> 1-\alpha/2$, it was proved in \cite[Corollary 1.4(i)]{CSZ} that the SDE \eqref{s1} has a unique strong solution for each $x\in\R^d$. Furthermore, in the one-dimensional case, if $\alpha>1$, then the SDE \eqref{s1} also enjoys a unique strong solution for each $x\in\R$, even if  the drift $b$ is only bounded and measurable (see \cite[Remark 1, p.\ 82]{TTW}).

Denote by $\nu$ the L\'evy measure of the pure jump L\'{e}vy process $Z$. We assume that \emph{there is a constant $\kappa_0>0$ such that}
  \begin{equation}\label{th10}
  \inf_{x\in \R^d,\, |x|\le \kappa_0} \big[\nu\wedge (\delta_x\ast \nu)\big]( \R^d)>0.
  \end{equation}
Condition \eqref{th10} was first used in \cite{SW} to study the coupling property of L\'{e}vy processes. It is satisfied by a large class of L\'evy measures. For instance, if
  $$\nu(dz)\ge \I_{B(z_0, \varepsilon)}\rho_0(z)\,dz$$
for some $z_0\in\R^d$ and some $\varepsilon>0$ such that $\rho_0(z)$ is positive and continuous on $B(z_0,\varepsilon)$, then such L\'{e}vy measure $\nu$ fulfills \eqref{th10}, see \cite[Proposition 1.5]{SW2} for details. Actually, as shown in Proposition \ref{ppp}, the condition \eqref{th10} implies that there is a nonnegative measurable function $\rho$ on $\R^d$ such that $\nu(dz)\ge \rho(z)\,dz$ and
  $$\inf_{{x\in\R^d, |x|\le \kappa_0}} \int_{\R^d}[\rho(z)\wedge \rho(z+x)]\,dz>0.$$

Let $(P_t)_{t\ge0}$ be the transition semigroup associated with the process $(X_t)_{t\ge0}$. In this paper we are interested in the asymptotics of the Wasserstein-type distances (including the $L^1$-Wasserstein distance and the total variation) between probability distributions $\delta_x P_t=P_t(x,\cdot)$ and $\delta_y P_t=P_t(y,\cdot)$ for any $x,y\in\R^d$, when the drift term $b$ is dissipative outside some compact set, i.e. $b$ satisfies ${\mathbf B(\Phi_1(r), K_2r, l_0)}$ for some positive measurable function $\Phi_1$, and some constants $K_2>0$ and $l_0\ge0$.

This kind of problems have already been studied by Eberle
\cite{Eberle11, Eberle} in the diffusion case, i.e., the pure jump
L\'{e}vy process $(Z_t)_{t\ge0}$ in \eqref{s1} is replaced by a
Brownian motion $(B_t)_{t\ge0}$. He proved that the
$L^1$-Wasserstein distance between $\delta_x P_t$ and $\delta_y P_t$
decays exponentially fast. This result was slightly strengthened in
\cite{LW}, where we obtained some convergence result with respect to
the $L^p$-Wasserstein distance for any $p\ge1$. In the general
settings of Riemannian manifold and of SDEs with multiplicative
noises, F.-Y. Wang \cite{Wang16} obtained the exponential decay in
the $L^2$-Wasserstein distance under ${\mathbf B(K_1r, K_2 r,
l_0)}$, i.e., \eqref{th11} holds with $\Phi_1(r)=K_1r$ for some
$K_1>0$; moreover, he establishes similar results for the $L^p$-Wasserstein
distance for all $p\ge1$ provided that the diffusion
semigroup is ultracontractive.
 Some developments in the jump case can be found in \cite{Wang15, M15} under ${\mathbf B(K_1r, K_2 r, l_0)}$.
 In particular, the second author \cite{Wang15} obtained exponential convergence rate in
 the $L^p$-Wasserstein distance for any $p\ge1$ when the L\'{e}vy noise in \eqref{s1} has
 an $\alpha$-stable component.
 In the recent paper \cite{M15}, Majka considered a
 larger class of L\'{e}vy processes without $\alpha$-stable components,
 and obtained the exponential convergence rates with respect to both the $L^1$-Wasserstein distance
 and the total variation. See the remarks at the end of Subsection \ref{suce1} for more detailed discussions. We mention that in \cite{M15} the associated L\'{e}vy measure of the L\'{e}vy process $Z$ essentially has a rotationally invariant absolutely continuous component.

In order to present our results, we first introduce some notations. Let $\psi$ be a strictly increasing function on $[0,\infty)$ satisfying $\psi(0)=0$. Given two probability measures $\mu_1$ and $\mu_2$ on $\R^d$, we define the following quantity
  \begin{equation}\label{e:wdis}
  W_\psi(\mu_1,\mu_2)=\inf_{\Pi\in \mathscr{C}(\mu_1,\mu_2)}\int_{\R^d\times\R^d} \psi(|x-y|)\,d\Pi(x,y),
  \end{equation}
where $|\cdot|$ is the Euclidean norm and $\mathscr{C}(\mu_1,\mu_2)$ is the collection of measures on $\R^d\times\R^d$ having $\mu_1$ and $\mu_2$ as marginals. When $\psi$ is concave, the above definition gives rise to a \emph{Wasserstein distance} $W_\psi$ in the space $\mathscr P(\R^d)$ of probability measures $\mu$ on $\R^d$ such that $\int \psi(|z|)\,\mu(d z)<\infty$. If $\psi(r)=r$ for all $r\geq 0$, then $W_\psi$ is the standard $L^1$-Wasserstein distance (with respect to the Euclidean norm $|\cdot|$), which will be denoted by $W_1(\mu_1,\mu_2)$ throughout this paper. Another well-known example for $W_\psi$ is given by $\psi(r)=\I_{(0,\infty)}(r)$, which leads to the total variation distance $W_{\psi}(\mu_1,\mu_2)= \frac{1}{2} \|\mu_1-\mu_2\|_{{\rm Var}}.$

\subsection{Exponential convergence in Wasserstein-type distances}\label{suce1}
Throughout this paper, we denote by
  $$J(s):= \inf_{|x|=s} \big[\nu\wedge (\delta_x\ast \nu)\big]( \R^d),\quad s>0.$$
Condition \eqref{th10} implies that $  \inf_{0<s\le \kappa_0} J(s)>0$ for some $\kappa_0>0$.
The following result is the first main contribution of our paper on exponential convergence in the $L^1$-Wasserstein distance and the total variation for the SDE \eqref{s1}. Refer to Theorems \ref{thtpw} and \ref{thtvart} below for more general statements.

\begin{theorem}\label{th1} The following two assertions hold.
\begin{itemize}
\item[\rm(a)] Assume that there are constants $\alpha\in [0,1)$ and $\theta\in (0,\infty)$ such that
  \begin{equation}\label{th13}
  \lim_{r\to 0}\inf_{s\in (0,r]} J(s)s^\alpha \bigg(\log\frac1s\bigg)^{-1-\theta}>0.
  \end{equation} If the drift term $b$ satisfies ${\mathbf B(K_1 r^\beta, K_2 r,l_0)}$ with some constants $\beta\in [1-\alpha,1],\, K_1,l_0\ge0$ and $K_2>0$, then there exist constants $\lambda, c>0$ such that for any $x,y\in\R^d$ and $t>0$,
  \begin{equation}\label{th1-1}
  W_1(\delta_xP_t, \delta_yP_t)\le c e^{-\lambda t}|x-y|.
  \end{equation}
\item[\rm(b)] Assume that \eqref{th13} holds with $\alpha=0$, i.e. there is a constant $\theta\in (0,\infty)$ such that
  \begin{equation}\label{th13-11}
  \lim_{r\to 0}\inf_{s\in (0,r]} J(s)\bigg(\log\frac1s\bigg)^{-1-\theta}>0.
  \end{equation}
If the drift term $b$ satisfies ${\mathbf B(K_1, K_2 r,l_0)}$ with some constants  $K_1,l_0\ge0$ and $K_2>0$, then there exist constants $\lambda, c>0$ such that for any $x,y\in\R^d$ and $t>0$,
  \begin{equation}\label{thver3}
  \|\delta_xP_t- \delta_yP_t\|_{\rm Var}+W_1(\delta_xP_t, \delta_yP_t)\le c e^{-\lambda t}(1+|x-y|).
  \end{equation} In particular,  \begin{equation}\label{thsrvar-1}
  \|\delta_xP_t- \delta_yP_t\|_{\rm Var}\le c e^{-\lambda t}(1+|x-y|).
  \end{equation}
\end{itemize}
\end{theorem}

Let us make some comments on Theorem \ref{th1}. Firstly, by Example
\ref{exa} below, the condition \eqref{th13} is satisfied for any
(truncated) symmetric $\alpha'$-stable process with $\alpha'\in
(\alpha,2)$. In this case, the condition ${\mathbf B(K_1 r^\beta, K_2 r,l_0)}$ in
part (a) holds if the drift coefficient $b$ is dissipative outside some
compact set and $\beta$-H\"older continuous with $\beta\geq 1-\alpha$.
The latter is weaker than the assumptions on $b$ in
\cite[Corollary 1.4(i)]{CSZ}, which further implies that the SDE
\eqref{s1} has a unique strong solution.

\begin{example}\label{exa}\it Suppose that
 \begin{equation}\label{c:exa}\nu(dz)\ge \I_{\{0<z_1\le 1\}}\frac{c_{d,\alpha}}{|z|^{d+\alpha}}\,dz\end{equation}
for some $\alpha\in (0,2)$ and $c_{d,\alpha}>0$. Then, $J(s)\geq \tilde c_{d,\alpha}s^{-\alpha}$ for any $s>0$ small enough.
\end{example}

From Example \ref{exa} above, we can immediately get exponential convergence rates in the $L^1$-Wasserstein distance and the total variation for the SDE \eqref{s1}, when the L\'{e}vy noise $Z$ has a (truncated) $\alpha$-stable component for all $\alpha\in(0,2)$ and the drift term $b$ is dissipative outside some ball. Therefore, Theorem \ref{th1} covers the main result of \cite{Wang15} (see Theorem 1.2 therein). On the other hand, Example \ref{exa} indicates that Theorem \ref{th1} works for L\'evy processes whose associated L\'{e}vy measure does not necessarily have a rotationally invariant component. Therefore, Theorem \ref{th1} partially extends the framework of \cite{M15}. We note that, in order to derive \eqref{thver3} and \eqref{thsrvar-1}, the assumptions concerning the concentration of the L\'evy measure around zero (small jump activity) are weaker in \cite{M15} than those in the present paper. More precisely, \cite[Corollary 1.2]{M15} works even for finite measures (e.g.\ see \cite[Example 1.7]{M15}), while \eqref{th13-11} essentially requires that the L\'evy measure is infinite.

The approach of Theorem \ref{th1} is based on the coupling for L\'{e}vy processes, as in \cite{Wang15, M15}. It seems that the couplings used in \cite{Wang15, M15} depend heavily on the rotational symmetry of the L\'{e}vy measure, and so they do not work in our general setting, since we do not assume that the L\'{e}vy process $Z$ has a symmetric $\alpha$-stable component or the associated L\'{e}vy measure of $Z$ has a rotationally invariant absolutely continuous component. Therefore, some new ideas are required for the construction of the coupling.
One key ingredient of the proof in the paper relies, similarly to \cite{Eberle, Wang15, M15}, on using Wasserstein distances of type $W_\psi$ defined by \eqref{e:wdis} with appropriately chosen concave test functions $\psi$, which, in some sense, are comparable with $W_1$ for the estimate \eqref{th1-1}, or are intermediate between $W_1$ and the total variation for \eqref{thver3}.  It is worth pointing out that our choice of the concave test function $\psi$ satisfying $\psi(r)\asymp r$ (see Theorem \ref{thtpw} below) is quite simple.
The choice explicitly reflects the properties of the L\'evy measure $\nu$ and the drift $b$, and yields the explicit expression of $\lambda$ in \eqref{th1-1}, which is optimal in the sense that it is the same as that when $b$ satisfies the uniformly dissipative condition (see Remark \ref{r:thtpw}(1) below). The example below further indicates that, when $b$ satisfies ${\mathbf B(K_1r,K_2r, l_0)}$ and $Z$ is a symmetric $\alpha$-stable process, the constant $\lambda$ in \eqref{th1-1} is also of optimal order as $\alpha\to 2$.

\begin{example}\label{est-stable}\it Let $Z$ be a L\'evy process such that the associated L\'evy measure $\nu$ satisfies \eqref{c:exa} for some $\alpha\in (0,2)$. Assume that the drift $b$ satisfies ${\mathbf B(K_1r,K_2r, l_0)}$ for some constants $K_1, l_0\ge 0$ and $K_2>0$.
Then, there are constants $c_1,c_2>0$ such that the constant $\lambda$ in \eqref{th1-1} satisfies
$$
 \lambda\ge
  \begin{cases}
 c_1(K_2 \wedge l_0^{-\alpha}), & K_1l_0^{\alpha}\le 1;\\
 c_1(K_2\wedge l_0^{-\alpha})e^{-c_2K_1l_0^\alpha}, & K_1l_0^\alpha>1.
  \end{cases}$$
In particular, if $K_1=0$ and $l_0$ is large, then $\lambda\ge c_1 l_0^{-\alpha}$; if $K_1>0$ and $l_0$ is large, then $\lambda\ge c_1 l_0^{-\alpha} e^{-c_2 K_1l_0^{\alpha}}$.
Taking into account the related discussions in \cite[Section 2.3]{Eberle} for diffusions, we find that the lower bounds for $\lambda$ are of optimal orders with respect to $l_0, K_1$ and $K_2$ when $\alpha\to 2$ $($i.e.\ $Z$ is replaced by the standard Brownian motion$)$.
\end{example}

Secondly, we can see from Theorem \ref{th1} that there is a balance between the contributions of the noise and the drift in \eqref{s1}. On the one hand, in the uniformly dissipative case, one does not need assumptions like \eqref{th10} on the L\'evy measure, instead, a simple application of the synchronous coupling yields the exponential contractivity with respect to the $L^1$-Wasserstein distance. Note that our coupling \eqref{basic-coup-3} reduces to the synchronous coupling if $\kappa_0=0$ (see also the formula \eqref{proofth2544}). On the other hand, if the drift is locally non-dissipative and H\"older continuous, then the noise is required to fulfill the stronger condition \eqref{th13}.

Thirdly, assuming that the L\'{e}vy measure $\nu$ of $Z$ satisfies \eqref{th10} for some bounded and non-degenerate jumps (not the stronger condition \eqref{th13}) but has a rotationally invariant density function with respect to the Lebesgue measure, and the drift term $b$ satisfies ${\mathbf B(K_1 r, K_2 r,l_0)}$ with some constants $K_1,l_0\ge0$ and $K_2>0$, Majka \cite{M15} also proved \eqref{thver3}, see \cite[Assumptions 1-5 and Corollary 1.2]{M15} for more details. It is obvious that \eqref{thver3} does not imply \eqref{th1-1}.
We note that \eqref{th1-1} was also proved in \cite[Theorem 3.1]{M16} under an additional \lq\lq high concentration around zero\rq\rq \,\,assumption on the L\'evy measure, see \cite[Assumption L5]{M16}. As mentioned in \cite[Remark 1.6]{M15}, sufficient concentration of the L\'evy measure near zero (that is, the L\'evy noise enjoys a lot of small jumps and exhibits a diffusion-like type of behavior) seems to be necessary for obtaining \eqref{th1-1} rather than \eqref{thver3}, which can be obtained under much milder conditions. \eqref{th13} and \eqref{th13-11} as well as \cite[Assumption L5]{M16} are about sufficiently high small jump activity, and they all require the L\'evy measure to be infinite.  \eqref{th13} and \eqref{th13-11} indicate that there is sufficient overlap of small jumps for the L\'evy measure and its translation, while under \cite[Assumption L5]{M16} the small jumps corresponding to the first marginal of the L\'evy measure itself do not have finite moment. So, we believe that in general cases \eqref{th13} and \eqref{th13-11} are not comparable with \cite[Assumption L5]{M16}.  However, concerning symmetric $\alpha$-stable processes, it follows from Example \ref{exa} that both \eqref{th13} and \eqref{th13-11} hold true for any $\alpha\in (0,2)$, but \cite[Assumption L5]{M16} is satisfied only with $\alpha\in[1,2)$, see the remark below \cite[Assumption L5]{M16}.

Applying Theorem \ref{th1} and using some standard arguments (e.g.\ see \cite[Corollary 2]{Eberle} or \cite[Corollary 1.8]{LW}), we can also obtain that, under assumptions of Theorem \ref{th1} and the following additional condition
  $$ \int_{\{|z|\ge1\}}|z|\,\nu(dz)<\infty,  $$
there exist a unique invariant probability measure $\mu$, some constants $c,\lambda>0$ and a positive measurable function $c(x)$ such that
  \begin{equation}\label{re-ex}
  W_1(\delta_x P_t, \mu)\le c e^{-\lambda t} W_1(\delta_x , \mu),\quad x\in \R^d, t>0
  \end{equation}
and
  \begin{equation}\label{re-ex1}
  \|\delta_x P_t -\mu\|_{\rm Var}\le c(x)e^{-\lambda t},\quad x\in \R^d, t>0.
  \end{equation}
In the literature, \eqref{re-ex1} is called the exponential ergodicity for the process $(X_t)_{t\ge0}$. Note that from \eqref{thver3}, one can only obtain the exponential ergodicity with respect to $W_\psi$, where $\psi(r)=r+\I_{(0,\infty)}(r)$. In particular, one only has
  $$W_1(\delta_x P_t, \mu)\le  e^{-\lambda t} \big(c_1W_1(\delta_x , \mu)+c_2(x)\big),\quad x\in \R^d, t>0$$
for some positive constant $c_1>0$ and some positive measurable function $c_2(x)$, instead of \eqref{re-ex}. See \cite[Corollary 1.8]{M15} for more details. We emphasize that getting bounds of type \eqref{th1-1} instead of \eqref{thver3} is important in some applications. For example, Majka \cite{M16} (see also \cite[Remark 1.6]{M15}) showed how \eqref{th1-1} is used to obtain the so-called transportation inequalities, which characterize the concentration of measure phenomenon for solutions of SDEs of the form \eqref{s1}. By the dual representation of $W_1$ (see e.g. \cite[(5.10)]{Chen1}), \eqref{th1-1} implies that the associated semigroup $(P_t)_{t\ge0}$ maps ${\rm Lip}_b(\R^d)$ into itself, where ${\rm Lip}_b(\R^d)$ denotes the set of bounded Lipschitz functions on $\R^d$. Such property is useful in studying the existence of a
unique invariant probability for Markov semigroups, see \cite{KPS} and \cite[Section 2.2]{Wang10}.

\subsection{Strong ergodicity}

We are also interested in obtaining the exponential rate for total variation which is stronger than \eqref{thsrvar-1}; that is, we want to prove
  \begin{equation}\label{thm-strong-ergo-1}
  \|\delta_xP_t- \delta_yP_t\|_{\rm Var}\le c e^{-\lambda t},\quad x,y\in \R^d, t>0
  \end{equation}
for some positive constants $c$ and $\lambda$. Note that, compared with \eqref{thsrvar-1}, \eqref{thm-strong-ergo-1} is equivalent to
 $$W_\psi(\delta_xP_t, \delta_yP_t)\le \frac12 ce^{-\lambda t}\psi(|x-y|)$$ with $\psi(r)=\I_{(0,\infty)}(r)$, which enjoys the same form as those in \eqref{th1-1} and \eqref{thver3}.

 As shown by the result below, \eqref{thm-strong-ergo-1} can be established by imposing stronger dissipative condition on the drift term $b$ outside some compact set. See Theorem \ref{thtvarst} below for more general statement.

\begin{theorem}\label{thm-strong-ergo}
Assume that the drift term $b$ satisfies ${\mathbf B(K_1, \Phi_2(r),l_0)}$ with some constants  $K_1,l_0\ge0$ and some positive measurable function $\Phi_2$ such that $\Phi_2(r)$ is bounded from below for $r$ large enough, and
  \begin{equation}\label{erg-2}
  \int_{r_0}^\infty \frac{1}{\Phi_2(s)}\,ds <\infty\quad\textrm{ for some }r_0>0.
  \end{equation}
If \eqref{th13-11} holds, then there exist constants $\lambda, c>0$ such that for any $x,y\in\R^d$ and $t>0$, \eqref{thm-strong-ergo-1} holds true.
\end{theorem}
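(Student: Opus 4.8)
The plan is to run the refined basic coupling and to establish \eqref{thm-strong-ergo-1} by means of a \emph{bounded} test function; the extra integrability \eqref{erg-2} is exactly what allows the (linearly growing) test function behind Theorem~\ref{th1}(b) to be capped off into a bounded one, and this is what upgrades the $(1+|x-y|)$-estimate \eqref{thsrvar-1} to the uniform bound \eqref{thm-strong-ergo-1}.

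Let $(X_t,Y_t)_{t\ge0}$ be the refined basic coupling of two solutions of \eqref{s1} started from $x$ and $y$, put $r_t:=|X_t-Y_t|$, and let $\mathscr L$ denote the generator of $(r_t)_{t\ge0}$ under this coupling. The first step is a reduction: suppose $\psi:[0,\infty)\to[0,\infty)$ is increasing and bounded with $\psi(0)=0<\psi(0^+)$ and $\mathscr L\psi\le-\lambda\psi$ on $(0,\infty)$ for some $\lambda>0$. Then $e^{\lambda t}\psi(r_t)$ is a $\Pp_{x,y}$-supermartingale, so $\Ee_{x,y}[\psi(r_t)]\le\psi(|x-y|)e^{-\lambda t}\le\|\psi\|_\infty e^{-\lambda t}$; since $\I_{(0,\infty)}\le\psi/\psi(0^+)$ pointwise while $(X_t,Y_t)$ is an admissible coupling of $\delta_xP_t$ and $\delta_yP_t$,
\[
\|\delta_xP_t-\delta_yP_t\|_{\rm Var}=2W_{\I_{(0,\infty)}}(\delta_xP_t,\delta_yP_t)\le\frac{2}{\psi(0^+)}\Ee_{x,y}[\psi(r_t)]\le\frac{2\|\psi\|_\infty}{\psi(0^+)}e^{-\lambda t},
\]
uniformly in $x,y$. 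Everything thus reduces to constructing such a $\psi$.

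Fix $R_0>\max\{l_0,\kappa_0\}$ at which the test function $\psi_0$ from the proof of Theorem~\ref{thtvart} is differentiable. On $[0,R_0]$ I take $\psi$ equal to $\psi_0$ (increasing, $\psi_0(0)=0<\psi_0(0^+)$, and $\mathscr L\psi_0\le-\lambda_0\psi_0$): here \eqref{th13-11} is used — it makes the $J$-driven coalescing jumps effective as $r\to0$ — together with the one-sided bound $\<b(x)-b(y),x-y\>\le K_1|x-y|$ for $|x-y|<l_0$ contained in ${\mathbf B(K_1,\Phi_2(r),l_0)}$, the dissipativity $\<b(x)-b(y),x-y\>\le-\Phi_2(|x-y|)|x-y|$ on $\{|x-y|\ge l_0\}$ only helping. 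For $r>R_0$ I set $\psi(r):=\psi_0(R_0)+\varepsilon\int_{R_0}^r\Phi_2(s)^{-1}\,\d s$ with $\varepsilon:=\psi_0'(R_0)\Phi_2(R_0)$, so that $\psi$ is $C^1$ at $R_0$; by \eqref{erg-2} this $\psi$ is increasing and bounded, $\|\psi\|_\infty=\psi_0(R_0)+\varepsilon\int_{R_0}^\infty\Phi_2(s)^{-1}\,\d s<\infty$ — the boundedness is precisely what \eqref{erg-2} buys. Now, since the jumps of $r_t$ under the refined basic coupling do not increase $r_t$ (a synchronous jump leaves $r_t$ unchanged; the remaining ones decrease it, sending $r_t$ to $0$ when $r_t\le\kappa_0$), every jump from a state $r\le R_0$ stays in $[0,R_0]$, whence $\mathscr L\psi(r)=\mathscr L\psi_0(r)\le-\lambda_0\psi(r)$ there; and for $r>R_0$ the jump part of $\mathscr L\psi(r)$ is nonpositive while the drift part equals $\frac{\<b(X_t)-b(Y_t),X_t-Y_t\>}{r}\,\psi'(r)\le-\Phi_2(r)\cdot\varepsilon/\Phi_2(r)=-\varepsilon$, so $\mathscr L\psi(r)\le-\varepsilon\le-\lambda\psi(r)$ as soon as $\lambda\le\varepsilon/\|\psi\|_\infty$. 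Taking $\lambda\le\min\{\lambda_0,\varepsilon/\|\psi\|_\infty\}$ gives $\mathscr L\psi\le-\lambda\psi$ on $(0,\infty)$, and the reduction above yields \eqref{thm-strong-ergo-1}.

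The step I expect to be the main obstacle is the verification of the global inequality $\mathscr L\psi\le-\lambda\psi$. The range $[0,R_0]$ is essentially Theorem~\ref{thtvart}, but one must handle the splicing at $r=R_0$ — which is why $\psi$ is glued in a $C^1$ fashion and why it matters that the coupling cannot increase $r_t$ across $R_0$ — and, above all, confirm from the construction of the refined basic coupling in the earlier sections that its jumps of $r_t$ are indeed nonpositive; should small upward jumps of $r_t$ be present, one would instead have to weigh them against $-\Phi_2(r_t)$, which the strong dissipativity \eqref{erg-2}, together with the growth of $\Phi_2$ it forces and the assumption that $\Phi_2$ is bounded below for large $r$, still permits at the cost of a more careful choice of $R_0$ and $\psi$. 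Granting this, the differential inequality for $\Ee_{x,y}[\psi(r_t)]$ and the passage to the total variation are routine, so that Theorem~\ref{thm-strong-ergo} would follow — presumably exactly along these lines from the more general Theorem~\ref{thtvarst}.
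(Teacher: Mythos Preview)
Your overall strategy---extend the test function used for Theorem~\ref{thtvart} by the integral $\int_{R_0}^r \Phi_2(s)^{-1}\,ds$ so that $\psi$ becomes bounded, and then read off the uniform total-variation bound---is exactly what the paper does in Theorem~\ref{thtvarst}. However, the mechanism you invoke for the jump part is wrong, and if taken at face value the argument breaks.

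You claim that ``the jumps of $r_t$ under the refined basic coupling do not increase $r_t$''. This is false. Look at \eqref{basic-coup-3}: the second row sends $(x,y)\to(x+z,\,y+z+(y-x)_\kappa)$, which changes $|x-y|$ to $|x-y|+\kappa\wedge|x-y|$. Equivalently, in \eqref{proofth2544} the jump contribution is
\[
\tfrac12\,\mu_{(x-y)_\kappa}(\R^d)\big[\psi(r+r\wedge\kappa)+\psi(r-r\wedge\kappa)-2\psi(r)\big],
\]
a symmetric second difference with an \emph{upward} term. Consequently your claim that a jump from $r\le R_0$ ``stays in $[0,R_0]$'', and hence that $\mathscr L\psi(r)=\mathscr L\psi_0(r)$ on $[0,R_0]$, is unfounded: for $r\in(R_0-\kappa,R_0]$ the operator sees $\psi(r+\kappa)$, which lies in the new branch.

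The correct reason the jump part is nonpositive is \emph{concavity} of $\psi$, i.e.\ condition \eqref{p-w-1}. In the paper's proof one first replaces $\Phi_2$ by $\Phi_2^\ast(r)=\inf_{s\ge r}\Phi_2(s)$ to make $\Phi_2$ nondecreasing, so that $\psi'(r)=\psi'(2l_0)\Phi_2(2l_0)/\Phi_2(r)$ is nonincreasing and $\psi$ is concave on all of $[0,\infty)$. Then $\psi(r+\delta)+\psi(r-\delta)-2\psi(r)\le 0$ for every $r\ge\delta>0$, the jump term is $\le 0$, and for $r>2l_0$ the drift gives $-\Phi_2(r)\psi'(r)=-\psi'(2l_0)\Phi_2(2l_0)$, a negative constant, which dominates $-\lambda\psi(r)$ because $\psi$ is bounded. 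Near the splice point nothing extra is needed: since $\psi$ is globally concave and agrees with (a concave) $\psi_0$ on $[0,2l_0]$ while $\psi\le\psi_0$ beyond, the second difference for $\psi$ is automatically $\le$ that for $\psi_0$, so the estimates from Theorem~\ref{thtvart} carry over. Once you replace your ``no upward jumps'' assertion by this concavity argument, your proof becomes the paper's.
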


A typical example for \eqref{erg-2} is that $\Phi_2(s)=K_2 s^{1+\theta}$ for some $K_2,\theta>0$. In this case, the drift term $b$ satisfies that for any $x,y\in\R^d$ with $|x-y|\ge l_0$,
  $$\langle b(x)-b(y), x-y\rangle\le - K_2|x-y|^{2+\theta}.$$
For instance, $b(x)=\nabla V(x)$ with $V(x)=-|x|^{2+\theta}\, (\theta>0)$ satisfies the condition above, see \cite[Example 1.7]{LW} or \cite[Example 1.3]{Wang15}.

Next we will consider the strong ergodicity (with respect to the total variation) by making use of Theorem \ref{thm-strong-ergo}.
 We emphasize that, to the best of our knowledge, the proposition below is the first result concerning the strong ergodicity of SDEs with L\'evy jumps via the coupling approach. We also note that \eqref{thm-strong-ergo-1}, rather than \eqref{thsrvar-1}, is a key point to yield the strong ergodicity.

\begin{proposition}\label{p-ergo}
Suppose that the L\'{e}vy measure $\nu$ of the process $Z$ fulfills \eqref{th13-11} and that
  \begin{equation}\label{red}
  \int_{\{|z|\ge 1\}}\log(1+|z|)\,\nu(dz)<\infty.
  \end{equation}
If $b$ satisfies ${\mathbf B(K_1r, \Phi_2(r),l_0)}$ with some constants  $K_1,l_0\ge0$ and some positive measurable function $\Phi_2$ satisfying $\liminf\limits_{r\to\infty} \frac{\Phi_2(r)}{r}>0$ and \eqref{erg-2}, then the process $(X_t)_{t\ge0}$ is strongly ergodic, i.e. there exist a unique invariant probability measure $\mu$  and some constants $c,\lambda>0$ such that
  $$\|\delta_x P_t -\mu\|_{\rm Var}\le ce^{-\lambda t},\quad x\in \R^d, t>0.$$
\end{proposition}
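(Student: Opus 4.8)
The plan is to deduce Proposition~\ref{p-ergo} from the uniform total variation contraction of Theorem~\ref{thm-strong-ergo} together with elementary semigroup manipulations; granting that theorem, the remaining steps are routine.

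First I would check that the hypotheses of Theorem~\ref{thm-strong-ergo} are in force. The only apparent gap is that Theorem~\ref{thm-strong-ergo} is stated for drifts obeying $\mathbf B(K_1,\Phi_2(r),l_0)$ with a \emph{constant} $K_1$, while here $b$ only satisfies $\mathbf B(K_1 r,\Phi_2(r),l_0)$; but this is harmless, since for $|x-y|<l_0$ one has $\langle b(x)-b(y),x-y\rangle\le K_1|x-y|^2\le(K_1 l_0)|x-y|$, so $b$ automatically satisfies $\mathbf B(K_1 l_0,\Phi_2(r),l_0)$ with the constant $K_1 l_0\ge 0$ as well (and for $|x-y|\ge l_0$ the two conditions coincide, both reducing to $\langle b(x)-b(y),x-y\rangle\le-\Phi_2(|x-y|)|x-y|$). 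Moreover $\liminf_{r\to\infty}\Phi_2(r)/r>0$ forces $\Phi_2(r)\to\infty$, so $\Phi_2$ is bounded from below for $r$ large, and \eqref{erg-2} and \eqref{th13-11} are assumed outright. Hence Theorem~\ref{thm-strong-ergo} applies and yields constants $c,\lambda>0$ with
\[
  \|\delta_x P_t-\delta_y P_t\|_{\rm Var}\le c e^{-\lambda t},\qquad x,y\in\R^d,\ t>0.
\]

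Next I would produce the invariant measure. The shortest route uses the displayed contraction directly: writing $\delta_x P_{t+s}=\int_{\R^d}(\delta_z P_t)\,(\delta_x P_s)(\d z)$ one gets $\|\delta_x P_{t+s}-\delta_x P_t\|_{\rm Var}\le\int_{\R^d}\|\delta_z P_t-\delta_x P_t\|_{\rm Var}\,(\delta_x P_s)(\d z)\le c e^{-\lambda t}$, so $(\delta_x P_t)_{t\ge 0}$ is Cauchy in total variation as $t\to\infty$ and, the total variation metric being complete on probability measures, converges in total variation to some probability measure $\mu_x$; the contraction applied once more gives $\|\mu_x-\mu_y\|_{\rm Var}=0$, so $\mu:=\mu_x$ does not depend on $x$, and letting $t\to\infty$ in $\delta_x P_{t+s}=(\delta_x P_t)P_s$ (with $P_s$ a total variation contraction) gives $\mu P_s=\mu$ for every $s>0$. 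Alternatively, and more in the spirit of the classical theory, the conditions \eqref{red} and $\liminf_{r\to\infty}\Phi_2(r)/r>0$ permit a Foster--Lyapunov proof of existence: the asymptotic linear dissipativity contained in the latter yields $\langle b(x),x\rangle\le-c_0|x|^2+C$ for $|x|$ large, while \eqref{red} is exactly the logarithmic moment condition that keeps the large-jump term $\int_{\{|z|\ge 1\}}[V(x+z)-V(x)]\,\nu(\d z)$ finite for a Lyapunov function $V$ of logarithmic growth, this being the familiar condition under which an (asymptotically) dissipative L\'{e}vy-driven SDE possesses a stationary distribution. Either way one obtains an invariant probability measure $\mu$.

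Finally, strong ergodicity and uniqueness follow immediately from the contraction and the invariance of $\mu$. For any $x\in\R^d$ and $t>0$,
\[
  \|\delta_x P_t-\mu\|_{\rm Var}=\|\delta_x P_t-\mu P_t\|_{\rm Var}\le\int_{\R^d}\|\delta_x P_t-\delta_y P_t\|_{\rm Var}\,\mu(\d y)\le c e^{-\lambda t};
\]
and if $\mu'$ is another invariant probability measure, the same estimate with $\delta_x,\delta_y$ replaced by $\mu,\mu'$ gives $\|\mu-\mu'\|_{\rm Var}=\|\mu P_t-\mu' P_t\|_{\rm Var}\le c e^{-\lambda t}\to 0$, so $\mu=\mu'$. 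Thus essentially all the weight of the argument rests on Theorem~\ref{thm-strong-ergo}; the present proof is bookkeeping around it. The one point that would require a little care, if one insists on the Foster--Lyapunov route for existence, is that under the mere logarithmic moment \eqref{red} no polynomial Lyapunov function is available, so one must work with a logarithmic one and exploit the Ornstein--Uhlenbeck-type representation of the solution rather than a naive pointwise generator estimate --- an issue the Cauchy-in-total-variation argument above avoids altogether, which is why I would take that route.
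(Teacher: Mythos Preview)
Your proof is correct, and your primary route to the invariant measure is genuinely different from the paper's. The paper establishes existence of an invariant probability measure via the Foster--Lyapunov approach you mention only as an alternative: it takes $f(x)=\log(1+|x|)$ for $|x|\ge 1$, uses \eqref{red} to control the large-jump part of $L_X f$ and the dissipativity to get $L_X f(x)\le -c_1\Phi_2(|x|)/(1+|x|)+c_2$, then invokes the $W_1$-contraction of Theorem~\ref{th1} to deduce the (strong) Feller property and applies the Meyn--Tweedie criterion. Only afterwards does it bring in Theorem~\ref{thm-strong-ergo} for uniqueness and the final bound, exactly as you do.

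Your Cauchy-in-total-variation argument is more elementary and more self-contained: it extracts existence, uniqueness and invariance of $\mu$ directly from the uniform contraction \eqref{thm-strong-ergo-1}, without appealing to Lyapunov functions, the Feller property, or Meyn--Tweedie. In particular, your route never uses the logarithmic moment condition \eqref{red}, so it actually shows that this hypothesis is superfluous for the conclusion of Proposition~\ref{p-ergo}. The paper's approach, by contrast, makes clear \emph{why} \eqref{red} appears in the statement and connects the result to the classical stability theory; but as a proof of the proposition as stated, yours is cleaner. Your reduction of $\mathbf B(K_1 r,\Phi_2,l_0)$ to $\mathbf B(K_1 l_0,\Phi_2,l_0)$ is also correct and is not made explicit in the paper.
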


The remainder of this paper is arranged as follows. In the next
section, we will present the refined basic coupling process for
L\'evy processes, which is interesting on its own. To reveal the new idea
behind this refined basic coupling, we begin with the construction
of coupling operator for L\'evy processes. Then we consider the
corresponding coupling operator for the SDE \eqref{s1}. In
particular, we directly prove that there exists a system of SDEs,
which is associated with this coupling operator and admits a unique
strong solution. Based on the coupling process constructed above,
general approaches via the coupling idea to exponential convergence
rates in Wasserstein distance for the SDE \eqref{s1} are presented
in Section \ref{genecop}. Proofs of all the results in Section
\ref{section1} are given in Section \ref{section3}. In Section 5 we first present
another application of the refined basic coupling for
L\'evy processes; namely, the regularity of the semigroup
$(P_t)_{t\ge0}$ associated to the SDE \eqref{s1} under the one-sided
Lipschitz condition. We also discuss in
Subsection \ref{5.3} some variations of the exponential convergence
in $L^1$-Wasserstein distance. Some properties related to
\eqref{th10} are given in the appendix.
Finally, we note that couplings of SDEs with multiplicative L\'evy noises were treated in the recent paper \cite{LW18}, where part of results above have been extended.

\section{Refined basic coupling for L\'evy processes}\label{sectionse2}

In this section we shall first construct a new coupling operator for pure jump L\'evy processes, and then find the corresponding SDE for the coupling process. The reason that we choose to begin with the construction of the coupling operator is that it clearly reveals the idea behind the coupling.

\subsection{Coupling operator for L\'evy processes}\label{coupling-op}

Recall that a $d$-dimensional \emph{pure jump L\'evy process} $Z = (Z_t)_{t\geq 0}$ is a stochastic process on $\R^d$ with $Z_0=0$, stationary and independent increments and c\`adl\`ag sample paths. Its finite-dimensional distributions are uniquely characterized by the characteristic exponent or the symbol of characteristic function $\Ee e^{i\scalar{\xi,Z_t}} = e^{-t\Phi_Z(\xi)}$ with
  $$ \Phi_Z(\xi) =\int\Bigl(1-e^{i\langle{\xi},{z}\rangle}+i\langle{\xi},{z}\rangle\I_{B(0,1)}(z)\Bigr)\,\nu(dz),$$
where $\nu$ is the L\'evy measure, i.e.\ a $\sigma$-finite measure on $(\R^d, \mathscr{B}(\R^d))$ such that $\nu(\{0\})=0$ and the integral $\int(1\wedge |z|^2)\,\nu(dz)<\infty$. Its infinitesimal generator acting on $C_b^2(\R^d)$ is given by
  \begin{equation}\label{proofth21}
  \begin{split}
  L_Z f(x)=&\int\!\!\Big(f(x+z)-f(x)-\langle\nabla f(x), z\rangle\I_{B(0,1)}(z)\Big)\,\nu(dz).
  \end{split}
  \end{equation}
Recall that an operator $\widetilde L_Z$ acting on $C_b^2(\R^d\times \R^d)$ is called a coupling of $L_Z$, if for any $f,g\in C_b^2(\R^d)$, setting $h(x,y)=f(x)+g(y)$ for all $x,y\in\R^d$, then we have
  \begin{equation}\label{marginality}
  \widetilde L_Z h(x,y)= L_Z f(x)+ L_Z g(y).
  \end{equation}
If the coupling operator $\widetilde L_Z$ generates a Markov process $(Z^1_t, Z^2_t)_{t\geq 0}$ on $\R^d \times\R^d$, then the latter is called a coupling process of $Z$. The coupling time is the first time that the two marginal processes $(Z_t^1)_{t\ge0}$ and $(Z_t^2)_{t\ge0}$ meet each other; that is, the stopping time $T=\inf\{t\geq 0: Z^1_t =Z^2_t\}$. If $T$ is almost surely finite, then the coupling is called successful. After the coupling time, we often let the two marginal processes move together.

We note that, what is need in applications, for example to estimate Wasserstein distance between distributions of SDEs as in Theorem \ref{th1}, is a coupling of two copies of the same process in the sense that it has two marginal processes with the same transition probabilities (or the same finite dimensional distributions) but possibly different initial conditions. Clearly, the condition \eqref{marginality} is not sufficient to guarantee this. One standard approach for this is to impose an additional assumption or to check the existence (but not necessarily unique) of solutions to the martingale problem associated with the coupling operator $\widetilde L_Z$. See e.g.\, \cite[Sections 2 and 3]{CL} and \cite[Section 2]{PW} for the diffusion case, and \cite[Section 3.1]{Wang10} and \cite[Section 2.2]{Wang15} for the L\'evy case. In the present paper, instead, we start from the assumption that the SDE \eqref{s1} has a unique strong solution, which enables us to prove the existence of a unique strong solution of some SDE on $\R^{2d}$ whose infinitesimal generator coincides with the coupling operator constructed below, see Propositions \ref{2-prop-1} and \ref{3-prop-2}. We also note that, by \cite[Theorem 1, p.\
2]{BLP} and \cite[Corollary 2.5]{KU}, for a large class of SDEs with jumps, if the strong solution exists uniquely, then the weak solution is also
unique, which in turn yields that the corresponding martingale problem is well posed. So, the approach via SDE to obtain the existence of coupling process associated with the coupling operator is stronger than the martingale problem used in aforementioned papers.

We first give the intuitive ideas that lead to the particular construction of our coupling. In the construction of a coupling process for pure jump L\'evy process $Z$, we often require the coupling time $T$ to be as small as possible, which provides better convergence speed. To this end, the natural idea is to make the two marginal processes jump to the same point with the biggest possible rate. This is exactly the meaning of the \emph{basic coupling} in \cite[Example 2.10]{Chen}. Here the biggest jump rate is the maximum common part of the jump intensities. In our setting, it takes the form $\mu_{y-x}(dz):=[\nu\wedge (\delta_{y-x} \ast\nu)](dz)$, where $x\neq y$ are the positions of the two marginal processes before the jump.

\begin{remark}
We claim that $\mu_x$ is a finite measure on $(\R^d, \mathscr{B}(\R^d))$ for any $x\neq0$. Indeed, for any $x,z\in\R^d$ with $x\neq0$ and $|z|\le |x|/2$,  $|z-x|\ge |x|-|z|\ge |x|/2$, which implies
  \begin{equation*}
  \int_{\{|z|\le |x|/2\}}(\delta_x\ast \nu)(dz)=\int_{\{|z|\le |x|/2\}}\nu(d(z-x)) \le \int_{\{|u|\ge |x|/2\}}\nu(du).
  \end{equation*}
Consequently,
 \begin{align*}
  \mu_x(\R^d)&=\int_{\{|z|\le |x|/2\}}\,\mu_x(dz)+\int_{\{|z|> |x|/2\}}\,\mu_x(dz)\\
  &\le\int_{\{|z|\le |x|/2\}} (\delta_x\ast \nu)(dz)+\int_{\{|z|>|x|/2\}}\,\nu(dz)  \le 2\int_{\{|z|\ge |x|/2\}}\,\nu(dz)<\infty.
  \end{align*}
\end{remark}

The operator corresponding to the basic coupling can be written as follows: for any $f\in C_b^2(\R^d \times \R^d)$,
  \begin{align*}
  \widetilde L_Z f(x,y)&= \int \!\!\Big(f(x+z, y+z+(x-y))-f(x,y)-\<\nabla_x f(x,y), z\>\I_{\{|z|\leq 1\}}\\
  &\hskip30pt -\<\nabla_y f(x,y), x-y+z\>\I_{\{|z+(x-y)|\leq 1\}} \Big)\,\mu_{y-x}(dz)\\
  &\quad + \int \!\!\Big(f(x+z, y)-f(x,y)-\<\nabla_x f(x,y), z\>\I_{\{|z|\leq 1\}}\Big)\,(\nu-\mu_{y-x}) (dz)\\
  &\quad + \int \!\!\Big(f(x, y+z)-f(x,y)-\<\nabla_y f(x,y), z\>\I_{\{|z|\leq 1\}}\Big)\,(\nu-\mu_{x-y}) (dz).
  \end{align*}
Here and in what follows, $\nabla_xh(x,y)$ and $\nabla_yh(x,y)$ are defined as the gradient of $h(x,y)$ with respect to $x$, $y\in \R^d$, respectively. The last two integrals are needed so that the marginality \eqref{marginality} of the coupling operator is satisfied. This can be seen by using the following crucial identity (see Corollary \ref{C:mea}):
  \begin{equation}\label{ee}
  \begin{split}
  \mu_{-x}(d(z-x)) &=(\delta_{x}*\mu_{-x})(dz)=  \big[\delta_{x}*\big(\nu\wedge (\delta_{-x}*\nu)\big)\big](dz)\\
  &=\big((\delta_{x}*\nu)\wedge \nu\big)(dz)= \mu_x(dz).
  \end{split}
  \end{equation}
This coupling can be illustrated as follows:
  \begin{equation}\label{basic-coup-1}
  (x,y)\longrightarrow
    \begin{cases}
    (x+z, y+z+(x-y)), & \mu_{y-x}(dz);\\
    (x+z, y), & (\nu - \mu_{y-x})(dz);\\
    (x, y+z), & (\nu - \mu_{x-y})(dz).
    \end{cases}
  \end{equation}
The first row of this coupling is quite good in applications, since the distance between the two marginals decreases from $|x-y|$ to $|(x+z)- (y+z+(x-y))|=0$. The second row, however, is not so welcome, because the new distance is $|x-y+z|$, which can be much bigger than the original one when the jump size $z$ is large. The same problem appears in the last row of the coupling.

Therefore, we have to modify the basic coupling to make it behave better. As a first step, we want to change the second row in \eqref{basic-coup-1} so that the distance after the jump is comparable with $|x-y|$. Inspired by the first row, a simple choice is $(x,y)\to (x+z, y+z+(y-x))$ with rate $\frac12 \mu_{x-y}(dz)$, where the distance after the jump is $2|x-y|$. The price to pay is that we need to modify at the same time the first row in \eqref{basic-coup-1}, so that the two marginal processes cannot jump to the same point with the biggest possible rate, but only half of it. For the last row, we simply let them jump with the same size and their distance remains unchanged. So the coupling \eqref{basic-coup-1} becomes
  \begin{equation}\label{basic-coup-2}
  (x,y)\longrightarrow
    \begin{cases}
    (x+z, y+z+(x-y)), & \frac12 \mu_{y-x}(dz);\\
    (x+z, y+z+(y-x)), & \frac12 \mu_{x-y}(dz);\\
    (x+z, y+z), & \big(\nu - \frac12 \mu_{y-x}  -\frac12 \mu_{x-y}\big)(dz).
    \end{cases}
  \end{equation}
Thanks to the identity \eqref{ee} again,
we are able to verify the marginality \eqref{marginality} for this modified coupling.

The above coupling \eqref{basic-coup-2} has a drawback too. If the original pure jump L\'evy process $Z$ is of finite range, then the jump intensity $\mu_{y-x}(dz)$ is identically zero for $|y-x|$ large enough. Thus the two marginal processes of the coupling \eqref{basic-coup-2} will never get closer if they are initially far away. Our intuitive idea to overcome this difficulty is that if the distance between the marginal processes is already small, then we let them jump as in \eqref{basic-coup-2}; while if the distance  is too large, then it would be more reasonable to reduce it by a small amount after each jump, since the requirement that their distance decreases to zero seems too greedy. Thus, we introduce a parameter $\kappa>0$ which serves as the threshold to determine whether the marginal processes jump to the same point or become slightly closer to each other. Let $\kappa_0$ be the constant in \eqref{th10}. For any $x$, $y\in\R^d$ and $\kappa\in(0, \kappa_0]$, define
  \begin{equation}\label{e:xy}
  (x-y)_{\kappa}=\bigg(1\wedge \frac{\kappa}{|x-y|}\bigg)(x-y).
  \end{equation}
We make the convention that $(x-x)_\kappa=0$. Then our coupling is given as follows:
  \begin{equation}\label{basic-coup-3}
  (x,y)\longrightarrow
    \begin{cases}
    (x+z, y+z+(x-y)_\kappa), & \frac12 \mu_{(y-x)_\kappa}(dz);\\
    (x+z, y+z+(y-x)_\kappa), & \frac12 \mu_{(x-y)_\kappa}(dz);\\
    (x+z, y+z), & \big(\nu - \frac12 \mu_{(y-x)_\kappa}  -\frac12 \mu_{(x-y)_\kappa}\big)(dz).
    \end{cases}
  \end{equation}
We see that if $|x-y|\leq \kappa$, then the above coupling is the same as that in \eqref{basic-coup-2}. If $|x-y|>\kappa$, then according to the first two rows, the distances after the jump are $|x-y|-\kappa$ and $|x-y|+\kappa$, respectively. We will call the coupling given by \eqref{basic-coup-3} the \emph{refined basic coupling} for pure jump L\'evy processes.

We make some further comments on the construction of the refined basic coupling. We first note that this construction does not require any geometric assumption on the L\'evy measure. Second, in order to obtain a coupling with good optimality properties, it is well known from the theory of optimal transport that one should not remove the common mass of two probability distributions, so in this sense the first row in \eqref{basic-coup-1} is natural (see \cite[Section 2.1]{M15} for more details). Therefore, the question is what one should do with the remaining mass. If the L\'evy measure is rotationally invariant, Majka \cite[Section 2.2]{M15} applied reflection to the remaining mass. For general setting, one can try to apply the independent coupling to the remaining mass as indicated in \eqref{basic-coup-1}. However, as mentioned in remarks below \eqref{basic-coup-1}, such coupling does not behave well. Intuitively, a much better solution would be to couple the remaining mass synchronously, but it turns out that such a construction does not produce a coupling.  In the preliminary construction of the refined basic coupling \eqref{basic-coup-2}, we send the two marginal processes to the same place only with half of the maximal probability (see the first row in \eqref{basic-coup-2}), and with the other half  we perform  a transformation which doubles the distance between the  two marginal processes  (see the second row in \eqref{basic-coup-2}). With this transformation, we can apply the synchronous movement with the remaining probability (see the third row in \eqref{basic-coup-2}) and still obtain a coupling. From the refined basic coupling constructed above, it seems that in some cases it may be a good idea to give up jumping to the same place with the maximal possible probability, since decreasing that probability may allow us to coupling the remaining mass in a more convenient way. Such an idea would be helpful in the study of constructing couplings of non-symmetric L\'evy processes with good optimality properties, which seems to be an interesting open problem. The readers can refer to related discussions in the end of \cite[Section 5]{B17}, where Makovian maximal coupling for subordinated Brownian motions, partly motivated by \cite{BSW}, was investigated.

We can now write explicitly the coupling operator $\widetilde L_Z$ corresponding to \eqref{basic-coup-3}. Fix $h\in C_b^2(\R^{2d})$. For any $x,y\in\R^d$, we define
  \begin{align}\label{proofth24}
    \widetilde{L}_Z h(x,y)
    &= \frac{1}{2}\int\Big( h(x+z,y+ z+(x-y)_{\kappa})-h(x,y)-\langle\nabla_xh(x,y), z\rangle \I_{\{|z|\le 1\}}\cr
    &\qquad\qquad -\langle\nabla_yh(x,y), z+(x-y)_{\kappa}\rangle \I_{\{|z+(x-y)_{\kappa}|\le 1\}}\Big)\,\mu_{(y-x)_{\kappa}}(dz)\cr
    &\quad +\frac{1}{2}\int\Big( h(x+z,y+ z+(y-x)_{\kappa})-h(x,y)-\langle\nabla_xh(x,y), z\rangle \I_{\{|z|\le 1\}}\\
    &\qquad\qquad -\langle\nabla_yh(x,y), z+(y-x)_{\kappa}\rangle \I_{\{|z+(y-x)_{\kappa}|\le 1\}}\Big)\,\mu_{(x-y)_{\kappa}}(dz)\cr
    &\quad +\int\Big( h(x+z,y+z)-h(x,y)-\langle\nabla_xh(x,y), z\rangle \I_{\{|z|\le 1\}}\cr
    &\qquad\qquad-\langle\nabla_yh(x,y), z\rangle \I_{\{|z|\le 1\}}\Big)\,\Big(\nu -\frac{1}{2}\mu_{(x-y)_{\kappa}} -\frac{1}{2}\mu_{(y-x)_{\kappa}}\Big)(dz). \nonumber
  \end{align}

Below, we prove rigorously that $\widetilde{L}_Z$ is indeed a coupling operator of the operator $L_Z$ given by \eqref{proofth21}. For this we let $h(x,y)=g(y)$ for any $x,y\in\R^d$, where $g\in C_b^2(\R^d)$. Then, according to \eqref{proofth24},
  \begin{align*}
    \widetilde{L}_Z h(x,y) &= \frac{1}{2}\int \Big(g(y+ z+(x-y)_{\kappa})-g(y)\\
    &\qquad\qquad -\langle\nabla g(y), z+(x-y)_{\kappa}\rangle \I_{\{|z+(x-y)_{\kappa}|\le 1\}}\Big)\,\mu_{(y-x)_{\kappa}}(dz)\\
    &\quad+\frac{1}{2}\int \Big( g(y+ z+(y-x)_{\kappa})-g(y)\\
    &\qquad\qquad -\langle\nabla g(y), z+(y-x)_{\kappa}\rangle \I_{\{|z+(y-x)_{\kappa}|\le 1\}}\Big)\,\mu_{(x-y)_{\kappa}}(dz)\\
    &\quad+\int \Big(g(y+z)-g(y)-\langle\nabla g(y), z\rangle \I_{\{|z|\le 1\}}\Big) \Big(\nu -\frac{1}{2}\mu_{(x-y)_{\kappa}} -\frac{1}{2}\mu_{(y-x)_{\kappa}}\Big)(dz).
    \end{align*}
Changing the variables $z+(x-y)_{\kappa}\to u$ and $z+(y-x)_{\kappa} \to u$ respectively leads to
  \begin{align*}
    \widetilde{L}_Z h(x,y)&=\frac{1}{2}\int\Big(g(y+u)-g(y)-\langle\nabla g(y), u\rangle \I_{\{|u|\le 1\}}\Big)\,\mu_{(y-x)_{\kappa}}(d(u-(x-y)_{\kappa}))\\
    &\quad+\frac{1}{2}\int\Big( g(y+u)-g(y)-\langle\nabla g(y), u\rangle \I_{\{|u|\le 1\}}\Big)\,\mu_{(x-y)_{\kappa}}(d(u-(y-x)_{\kappa}))\\
    &\quad+\int\Big(g(y+z)-g(y)-\langle\nabla g(y), z\rangle \I_{\{|z|\le 1\}}\Big) \Big(\nu -\frac{1}{2}\mu_{(x-y)_{\kappa}} -\frac{1}{2}\mu_{(y-x)_{\kappa}}\Big)(dz).
    \end{align*}
By \eqref{ee}, the expression above is equal to $L_Zg(y)$, cf.\ \eqref{proofth21}. Thus,
we can easily conclude that the operator $\widetilde L_Z$ defined by \eqref{proofth24} is a coupling operator of $L_Z$, i.e. \eqref{marginality} holds.

The existence of Markov processes associated with the coupling operator $\widetilde L_Z$ defined by \eqref{proofth24} will be  proved in the next subsection via the SDE approach. More explicitly, according to Propositions \ref{2-prop-1}, \ref{3-prop-2} and Remark \ref{r-coulevy} below, we can find  a Markov process $(Z_t, Z^\ast_t)_{t\ge0}$ on $\R^{2d}$, as a unique strong solution for some SDE (see \eqref{coup-SDE-1} and \eqref{Z-star} below), such that the associated infinitesimal generator is exactly the coupling operator $\widetilde L_Z$.

\subsection{Coupling process for L\'evy processes}\label{cou:pro}

The aim of this subsection is to find the SDE associated with the coupling operator $\widetilde L_Z$ defined above. This will help us with constructing the coupling process by solving the SDE.

For a pure jump L\'{e}vy process $Z$, by the L\'{e}vy--It\^{o} decomposition, there exists a Poisson random measure $N(ds,dz)$ associated with $Z$ in such a way that
  $$Z_t=\int_0^t\int_{\{|z|>1\}}z\,N(ds,dz)+\int_0^t\int_{\{|z|\le1\}}z\,\tilde{N}(ds,dz),$$
where
  $$\tilde{N}(ds,dz)=N(ds,dz)-ds\,\nu(dz)$$
is the compensated Poisson measure. Recall that there exist a sequence of random variables $(\tau_j)_{j\ge1}$ in $\R_+$ encoding the jump times and a sequence of random variables $(\xi_j)_{j\ge1}$ in $\R^d$ encoding the jump sizes such that
  $$N((0,t],A)(\omega)=\sum_{j=1}^\infty \delta_{(\tau_j(\omega), \xi_j(\omega))}((0,t]\times A),\quad \omega\in \Omega, A\in \mathscr{B}(\R^d).$$
To construct a coupling process, let us follow the idea in \cite[Section 2.2]{M15} and begin with extending the Poisson random measure $N$ on $\R_+\times \R^d$ to a Poisson random measure on $\R_+\times \R^d\times [0,1]$, by replacing the $d$-dimensional random variables $\xi_j$ determining the jump sizes of $(Z_t)_{t\ge0}$ with the $(d+1)$-dimensional random variables $(\xi_j, \eta_j)$, where each $\eta_j$ is a uniformly distributed random variable on $[0,1]$. Thus, we have
  \begin{equation*}\label{Poisson-meas}
  N((0,t],A)(\omega)=\sum_{j=1}^\infty \delta_{(\tau_j(\omega), \xi_j(\omega), \eta_j(\omega))}((0,t]\times A\times [0,1]),\quad \omega\in \Omega, A\in \mathscr{B}(\R^d).
  \end{equation*}
To save notations, we still denote the extended Poisson random measure by $N$, and write
  $$Z_t=\int_0^t\int_{\{|z|>1\}\times [0,1]}z\,N(ds,dz,du)+\int_0^t\int_{\{|z|\le1\}\times[0,1]}z\,\tilde{N}(ds,dz,du).$$
For simplicity, we set
  \begin{equation}\label{Poisson-random-measure}
  \bar{N}(ds,dz,du)=\I_{\{|z|>1\}\times[0,1]}N(ds,dz,du)+\I_{\{|z|\le1\}\times [0,1]}\tilde{N}(ds,dz,du)
  \end{equation}
and hence
  $$Z_t=\int_0^t\int_{\R^d\times[0,1]} z\,\bar{N}(ds,dz,du).$$
or equivalently,
  \begin{equation}\label{coup-SDE-1}
  d Z_t= \int_{\R^d\times[0,1]} z\,\bar{N}(dt,dz,du).
  \end{equation}
We want to find the SDE for the process $Z^\ast:=(Z^\ast_t)_{t\geq 0}$ so that $(Z_t, Z^\ast_t)_{t\geq 0}$ is a Markov process on $\R^{2d}$, and has the coupling operator $\widetilde L_Z$ constructed in \eqref{proofth24} as its generator.

With the above notations and taking into account the construction \eqref{basic-coup-3} of the coupling operator $\widetilde L_Z$, if a jump occurs at time $t$, then the process $Z$ moves from the point $Z_{t-}$ to $Z_{t-} + z$, and we draw a random number $u\in [0,1]$ to determine whether the process $Z^\ast$ should jump from the point $Z^\ast_{t-}$ to the points $Z^\ast_{t-} +z +(Z_{t-} -Z^\ast_{t-})_\kappa$, $Z^\ast_{t-}+z +(Z^\ast_{t-} -Z_{t-})_\kappa$ and $Z^\ast_{t-}+z$, respectively. To this end, we define the control function $\rho$ as follows: for any $x,z\in\R^d$,
  $$\rho(x,z)=\frac{\nu \wedge (\delta_x\ast \nu) (dz)}{\nu(dz)}\in [0,1]. $$
By convention, $\rho(0,z)\equiv 1$ for all $z\in\R^d$. For simplification of notations, we write $U_t=Z_{t} -Z^\ast_{t}$ and consider the following SDE:
  \begin{equation}\label{coup-SDE-2}
  \begin{split}
  d Z^\ast_t&= \int_{\R^d\times [0,1]} \Big[\big(z+(U_{t-})_{\kappa}\big) \I_{\{u\le \frac12 \rho((-U_{t-})_{\kappa},z)\}} \\
  &\quad \quad +\big(z+(-U_{t-})_{\kappa}\big) \I_{\{\frac12 \rho((-U_{t-})_{\kappa},z)< u\le \frac12 [\rho((-U_{t-})_{\kappa},z)+\rho((U_{t-})_{\kappa},z)]\}}\\
  &\quad\quad  + z \I_{\{\frac12 [\rho((-U_{t-})_{\kappa},z)+\rho((U_{t-})_{\kappa},z)]< u\le 1\}}\Big]  \bar{N}(dt,dz,du)\\
  &\quad - \int_{\R^d\times [0,1]} \!
  \Big[ \big(z+(U_{t-})_{\kappa}\big)\! \big(\I_{\{|z+(U_{t-})_{\kappa}|\le 1\}} \! -\!\I_{\{|z|\le 1\}}\big)\! \I_{\{u\le \frac12 \rho((-U_{t-})_{\kappa},z)\}}\\
  &\quad \quad +\big(z+(-U_{t-})_{\kappa}\big) \big(\I_{\{|z+(-U_{t-})_{\kappa}|\le 1\}} -\I_{\{|z|\le 1\}}\big)\\
  &\quad \quad \quad \times \I_{\{\frac12 \rho((-U_{t-})_{\kappa},z)< u\le
  \frac12 [\rho((-U_{t-})_{\kappa},z)+\rho((U_{t-})_{\kappa},z)]\}}\Big] \,\nu(dz)\,du\,dt.
  \end{split}
  \end{equation}
Here, the first integral with respect to the Poisson random measure corresponds to three jumps in \eqref{basic-coup-3}, while the second integral is needed to ensure that $(Z_t, Z^\ast_t)_{t\geq 0}$ has the generator $\widetilde L_Z$, see the proof of Proposition \ref{3-prop-2} below.

The equation \eqref{coup-SDE-2} looks a little complicated, thus we have to simplify it before moving forward. Recall that for $x,y\in\R^d$ and $\kappa\in (0,\kappa_0]$, $(x-y)_{\kappa}$ is given by \eqref{e:xy}. By collecting the terms involving $z$, we can rewrite the above equation as
  \begin{equation}\label{e:couppro}
  \begin{split}
  d{Z}^\ast_t&=\int_{\R^d\times [0,1]} z \, \bar{N}(dt,dz,du)\\
  &\quad + \int_{\R^d\times [0,1]}\Big[(U_{t-})_{\kappa} \I_{\{u\le \frac12 \rho((-U_{t-})_{\kappa},z)\}}\\
  &\quad\quad +(-U_{t-})_{\kappa} \I_{\{\frac12 \rho((-U_{t-})_{\kappa},z)< u\le \frac12 [\rho((-U_{t-})_{\kappa},z)+\rho((U_{t-})_{\kappa},z)]\}}\Big]
  \bar{N}(dt,dz,du)\\
  &\quad -\frac12 \int_{\R^d}
  \Big[\big(z+(U_{t-})_{\kappa}\big) \big(\I_{\{|z+(U_{t-})_{\kappa}|\le 1\}} -\I_{\{|z|\le 1\}}\big) \rho((-U_{t-})_{\kappa},z)\\
  &\quad\quad +\big(z+(-U_{t-})_{\kappa}\big) \big(\I_{\{|z+(-U_{t-})_{\kappa}|\le 1\}} -\I_{\{|z|\le 1\}}\big)  \rho((U_{t-})_{\kappa},z)\Big] \,\nu(dz) \,dt.
  \end{split}
  \end{equation}

Observe that if $U_{t-}=Z_{t-}-Z^\ast_{t-}=0$, then $dZ_t^\ast=dZ_t$; if $U_{t-}\neq0,$ then, by the fact that $\mu_x=\nu\wedge (\delta_x\ast \nu)$ is a finite measure on $(\R^d, \mathscr{B}(\R^d))$ for any $x\neq0$,
  \begin{equation}\label{prop-2.1}
  \int_{\R^d \times[0,1]}  \I_{\{ u\le \frac{1}{2}\rho((-U_{t-})_{\kappa},z)\}}\,\nu(dz)\,du =\frac12\mu_{(-U_{t-})_\kappa}(\R^d) <\infty
  \end{equation}
and
  \begin{equation}\label{prop-2.2}
  \begin{split}
  &\int_{\R^d \times[0,1]} \!\!\!\! \I_{\{\frac{1}{2}\rho((-U_{t-})_{\kappa},z)<u\le \frac{1}{2}[\rho((-U_{t-})_{\kappa},z)+\rho((U_{t-})_{\kappa},z)]\}}\nu(dz)du
 \!=\! \frac12 \mu_{(U_{t-})_\kappa}\!(\R^d) \!<\infty.
  \end{split}
  \end{equation}
Hence,
  $$\int_{\R^d \times[0,1]}  \I_{\{ u\le \frac{1}{2}\rho((-U_{t-})_{\kappa},z)\}}\,\bar{N}(dt,dz,du)$$
and
  $$\int_{\R^d \times[0,1]}  \I_{\{\frac{1}{2}\rho((-U_{t-})_{\kappa},z)<u\le \frac{1}{2}[\rho((-U_{t-})_{\kappa},z)+\rho((U_{t-})_{\kappa},z)]\}}\,\bar{N}(dt,dz,du)$$
are well defined.

We denote by $J_i\, (1\leq i \leq 3)$ the three terms on the right hand side of \eqref{e:couppro}. On the one hand, using \eqref{ee} and changing variable $z+(U_{t-})_\kappa \to z$ lead to
  \begin{align*}&\frac12 \int_{\R^d}
  \big(z+(U_{t-})_{\kappa}\big) \big(\I_{\{|z+(U_{t-})_{\kappa}|\le 1\}} -\I_{\{|z|\le 1\}}\big) \rho((-U_{t-})_{\kappa},z)\,\nu(dz)\\
  &= \frac12\int_{\R^d} z \big(\I_{\{|z|\le 1\}} -\I_{\{|z+(-U_{t-})_{\kappa}|\le 1\}}\big)  \rho((U_{t-})_{\kappa},z)\,\nu(dz).
  \end{align*}
Thus,
  $$J_3=\frac12 (U_{t-})_{\kappa}\int_{\R^d} \big(\I_{\{|z+(-U_{t-})_{\kappa}|\le 1\}} -\I_{\{|z|\le 1\}}\big)  \rho((U_{t-})_{\kappa},z) \,\nu(dz) \,dt.$$
On the other hand, the subtracted term in the martingale part of $J_2$ is
  \begin{align*}&\int_{\{|z|\le 1\} \times [0,1]}\Big[(U_{t-})_{\kappa} \I_{\{u\le \frac12 \rho((-U_{t-})_{\kappa},z)\}}\\
  &\quad\qquad\qquad +(-U_{t-})_{\kappa} \I_{\{\frac12 \rho((-U_{t-})_{\kappa},z)< u\le \frac12 [\rho((-U_{t-})_{\kappa},z)+\rho((U_{t-})_{\kappa},z)]\}}\Big]
  \,\nu(dz)\,du\, dt\\
  &=\frac12 (U_{t-})_\kappa\bigg[\int_{\{|z|\le 1\}}\rho((-U_{t-})_{\kappa},z)\,\nu(dz)-\int_{\{|z|\le 1\}}\rho((U_{t-})_{\kappa},z)\,\nu(dz)\bigg]dt  \\
  &=\frac12 (U_{t-})_\kappa \int \big(\I_{\{|z+(-U_{t-})_{\kappa}|\le 1\}} -\I_{\{|z|\le 1\}}\big)  \rho((U_{t-})_{\kappa},z) \,\nu(dz)\,dt,
  \end{align*}
where in the last equality we also used \eqref{ee}. According to both equalities above, we can write \eqref{e:couppro} in an equivalent but more convenient way as
  \begin{equation*}
  \begin{split}
  d{Z}^\ast_t&=\int_{\R^d\times [0,1]} z \, \bar{N}(dt,dz,du) + (U_{t-})_{\kappa} \int_{\R^d\times [0,1]}\Big[ \I_{\{u\le \frac12 \rho((-U_{t-})_{\kappa},z)\}} \\
  &\qquad\qquad\qquad - \I_{\{\frac12 \rho((-U_{t-})_{\kappa},z)< u\le \frac12 [\rho((-U_{t-})_{\kappa},z)+\rho((U_{t-})_{\kappa},z)]\}}\Big]
  {N}(dt,dz,du).
  \end{split}
  \end{equation*}

We denote by
  $$V_{t}(z,u)=(U_{t})_{\kappa} \Big[ \I_{\{u\le \frac12 \rho((-U_{t})_{\kappa},z)\}} - \I_{\{\frac12 \rho((-U_{t})_{\kappa},z)< u\le \frac12 [\rho((-U_{t})_{\kappa},z)+\rho((U_{t})_{\kappa},z)]\}}\Big] $$
and
  $$dL^\ast_t=\int_{\R^d\times [0,1]}V_{t-}(z,u) \,{N}(dt,dz,du).$$
Then \eqref{coup-SDE-2} reduces to
  \begin{equation}\label{Z-star}
  d{Z}^\ast_t= dZ_t+dL^\ast_t.
  \end{equation}

By Remark \ref{r-coulevy} below, the process $(Z_t, Z^\ast_t)_{t\ge0}$ constructed above is a Markov coupling process for the L\'evy process $Z$, and its infinitesimal generator is $\widetilde L_Z$ defined in \eqref{proofth24}. Since the proof is similar to that of the coupling for the SDE \eqref{s1}, we postpone it in the next subsection.

\subsection{Coupling for the SDE \eqref{s1}}\label{sectioncousde}

In this part we study the coupling process of the solution $(X_t)_{t\geq 0}$ to the SDE \eqref{s1}. The infinitesimal generator of  $(X_t)_{t\geq 0}$ is
  \begin{equation}\label{SDE-generator}
  \begin{split}
  L_X f(x)&= \int\!\!\Big(f(x+z)-f(x)-\langle\nabla f(x), z\rangle\I_{\{|z|\leq 1\}} \Big)\,\nu(dz)+\langle b(x), \nabla f(x)\rangle\\
  &= L_Zf(x)+\langle b(x),  \nabla f(x)\rangle.
  \end{split}
  \end{equation}
Given the coupling operator $\widetilde L_Z$ in \eqref{proofth24} for the pure jump L\'evy process $Z$, it is natural to define $\widetilde L_X$ as follows: for any $h\in C_b^2(\R^d\times \R^d)$,
  \begin{equation}\label{SDE-coup-op}
  \widetilde L_X h(x,y)= \widetilde L_Z h(x,y) +\<b(x), \nabla_x h(x,y)\>+ \<b(y), \nabla_y h(x,y)\>.
  \end{equation}
Since $\widetilde L_Z$ is a coupling operator of $L_Z$, it is easy to see that $\widetilde{L}_X$ is a coupling operator of $L_X$ too.

Next we present the coupling equation corresponding to $\widetilde L_X$. Recall that the process $(X_t)_{t\geq 0}$ is generated by the SDE
  $$dX_t= b(X_t)\, dt+ dZ_t, \quad X_0=x.$$
Therefore, taking into account the equation \eqref{Z-star}, we denote by $U_t=X_t-Y_t$ and
  $$V_{t}(z,u)=(U_{t})_{\kappa} \Big[ \I_{\{u\le \frac12 \rho((-U_{t})_{\kappa},z)\}} - \I_{\{\frac12 \rho((-U_{t})_{\kappa},z)< u\le \frac12 [\rho((-U_{t})_{\kappa},z)+\rho((U_{t})_{\kappa},z)] \}}\Big]$$
for $z\in\R^d$ and $u\in[0,1]$. Then the marginal process $(Y_t)_{t\geq 0}$ of the coupling process $(X_t, Y_t)_{t\ge0}$ should fulfill the equation
  $$dY_t=b(Y_t)\, dt +d Z^\ast_t$$
with $d Z^\ast_t= dZ_t + dL^\ast_t$, where
  \begin{equation}\label{Z-star-SDE}
  d L^\ast_t= \int_{\R^d\times [0,1]}V_{t-}(z,u) \,{N}(dt,dz,du).
  \end{equation}

Fix any $x,y\in \R^d$ with $x\neq y$. We consider the system of equations:
  \begin{equation}\label{SDE-coup-eq-1}
  \begin{cases}
  dX_t=b(X_t)\, dt+dZ_t,& X_0=x;\\
  dY_t= b(Y_t)\,dt+ dZ_t+dL^\ast_t, & Y_0=y.
  \end{cases}
  \end{equation}

\begin{proposition}\label{2-prop-1}
The system of equations \eqref{SDE-coup-eq-1} has a unique strong solution.
\end{proposition}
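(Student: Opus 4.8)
The plan is to establish existence and uniqueness of the system \eqref{SDE-coup-eq-1} by an interlacing (piecing-out) construction that uses the structure $dL^\ast_t = \int_{\R^d\times[0,1]} V_{t-}(z,u)\,N(dt,dz,du)$, combined with the fact that $\mu_x(\R^d)<\infty$ for $x\neq 0$ (the Remark after \eqref{e:bound}). The key observation is that the extra driving term $L^\ast_t$ is a pure jump process of \emph{finite activity} as long as the two marginals remain distinct: indeed, by \eqref{prop-2.1}--\eqref{prop-2.2}, the intensity of the Poisson jumps contributing to $L^\ast$ at time $t$ is $\tfrac12\mu_{(-U_{t-})_\kappa}(\R^d)+\tfrac12\mu_{(U_{t-})_\kappa}(\R^d)<\infty$ whenever $U_{t-}=X_{t-}-Y_{t-}\ne 0$. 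So between consecutive such jumps, $(X_t,Y_t)$ solves the decoupled system
\begin{equation*}
  dX_t=b(X_t)\,dt+dZ_t,\qquad dY_t=b(Y_t)\,dt+dZ_t,
\end{equation*}
which has a unique non-explosive strong solution by the standing assumptions on $b$ (in particular $X_t-Y_t$ evolves by the deterministic ODE $dU_t=(b(X_t)-b(Y_t))\,dt$ between $L^\ast$-jumps, since the common noise $dZ_t$ cancels).

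First I would make this rigorous: set $U_0=x-y\ne0$ and let $\sigma_1$ be the first time a Poisson atom $(\tau_j,\xi_j,\eta_j)$ of $N$ satisfies $\eta_j\le \tfrac12[\rho((-U_{\tau_j-})_\kappa,\xi_j)+\rho((U_{\tau_j-})_\kappa,\xi_j)]$; since the relevant intensity is finite and bounded on compact time sets (using that $|U_t|$ stays bounded on bounded intervals by non-explosion of the marginals and the one-sided bound \eqref{th111}), $\sigma_1>0$ a.s. On $[0,\sigma_1)$ the system is the decoupled one above, which has a unique strong solution; at $\sigma_1$ we add the jump $V_{\sigma_1-}(\xi,\eta)=(U_{\sigma_1-})_\kappa(\ldots)$ prescribed by \eqref{Z-star-SDE} to $Y$. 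Then restart from $(X_{\sigma_1},Y_{\sigma_1})$ and iterate, producing stopping times $0=\sigma_0<\sigma_1<\sigma_2<\cdots$. One must check that $\sigma_n\to\infty$ a.s.; this follows because on each interval the only way to accumulate infinitely many $L^\ast$-jumps in finite time would force $|U_t|\to0$ along a sequence, but once $|U_t|\le\kappa$ a single $L^\ast$-jump of the first type sends $U$ to $0$ (the marginals couple) after which $dL^\ast\equiv0$ and the system is decoupled forever; and if $|U_t|$ stays away from $0$ the jump intensity is bounded, so only finitely many jumps occur in finite time. Splicing the unique solutions on each $[\sigma_n,\sigma_{n+1})$ yields a global strong solution, and pathwise uniqueness propagates interval by interval from the uniqueness on the decoupled pieces plus the deterministic jump rule.

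The main obstacle I anticipate is controlling the jump intensity of $L^\ast$ near the diagonal: the map $x\mapsto\mu_{x_\kappa}(\R^d)$ need not be bounded as $|x|\downarrow0$ (the bound in \eqref{e:bound} blows up like $\int_{\{|z|\ge|x|/2\}}\nu(dz)$), so one cannot rule out an accumulation of $L^\ast$-jumps purely by a uniform intensity bound. The resolution, as sketched, is qualitative rather than quantitative: either the marginals couple in finitely many jumps (because each first-type jump when $|U|\le\kappa$ is a genuine coupling jump sending $U\mapsto0$, after which $L^\ast$ vanishes), or $|U_t|$ is bounded below on the time interval in question and the intensity is then finite. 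A careful case analysis along these lines — together with the observation that between $L^\ast$-jumps $U$ solves a deterministic ODE controlled by \eqref{th111}, hence cannot reach $0$ in finite time from a positive value and stays in a bounded region — closes the argument. The remaining points (measurability/adaptedness of $V_{t-}$, that the interlacing construction yields a solution of \eqref{coup-SDE-2}/\eqref{Z-star-SDE} in the stated integral sense) are routine given the explicit jump description \eqref{basic-coup-3}.
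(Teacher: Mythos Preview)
Your proposal is correct and follows essentially the same interlacing (piecing-out) strategy as the paper: between the finitely-active $L^\ast$-jumps, both marginals solve the same SDE \eqref{s1} driven by the common noise $Z$, so the standing well-posedness assumption applies; at each $L^\ast$-jump one modifies $Y$ according to the deterministic rule encoded in $V_{t-}$; and one iterates. The paper's version differs only in bookkeeping: rather than arguing about $\sigma_n\to\infty$ via the dichotomy ``$|U_t|$ bounded below vs.\ coupling in finitely many jumps'', the paper introduces at each step the hitting time $T_k=\inf\{t>\sigma_{k-1}: X_t=Y^{(k)}_t\}$ of the diagonal and compares it with the next $L^\ast$-jump time $\sigma_k$. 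On $\{T_k\le\sigma_k\}$ one invokes pathwise uniqueness of \eqref{s1} to set $Y_t=X_t$ for $t\ge T_k$ and stops modifying; on $\{T_k>\sigma_k\}$ one applies the jump rule at $\sigma_k$ and restarts. This sidesteps the delicate point you flagged (that the intensity $\mu_{(U_{t-})_\kappa}(\R^d)$ may blow up as $|U_t|\downarrow 0$) without needing the qualitative ODE argument you sketch: if the marginals meet before the next $L^\ast$-jump, one simply couples them there. Your route is a touch more analytic and makes the role of the one-sided condition \eqref{th111} explicit, while the paper's is slightly cleaner because it avoids any discussion of how $|U_t|$ behaves near zero between jumps.
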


\begin{proof}
In the setting of our paper, we always assume that the equation \eqref{s1} (i.e., the first equation in \eqref{SDE-coup-eq-1}) has a non-explosive and pathwise unique strong solution $(X_t)_{t\geq 0}$. We show that the sample paths of $(Y_t)_{t\geq 0}$ can be obtained by repeatedly modifying those of the solution of the following equation:
  \begin{equation}\label{2-prop-1.1}
  d \tilde Y_t=b(\tilde Y_t)\,dt+ dZ_t,\quad \tilde Y_0=y.
  \end{equation}

Denote by $Y^{(1)}_t$ the solution to \eqref{2-prop-1.1}. Take a uniformly distributed random variable $\zeta_1$ on $[0,1]$, and define the stopping times $T_1=\inf\big\{t>0: X_t=Y^{(1)}_t \big\}$ and
  \begin{align*}
  \sigma_1=\inf\bigg\{t>0: &\, \zeta_1\leq \frac12\Big[\rho\big((Y^{(1)}_t-X_t)_\kappa, \Delta Z_t\big)+ \rho\big((X_t-Y^{(1)}_t)_\kappa,\Delta Z_t\big)\Big]\bigg\}.
  \end{align*}
We consider two cases:
\begin{itemize}
\item[(i)] On the event $\{T_1\leq \sigma_1\}$, we set $Y_t=Y^{(1)}_t$ for all $t< T_1$; moreover, by the pathwise uniqueness of the equation \eqref{s1}, we can define $Y_t=X_t$ for $t\geq T_1$.
\item[(ii)] On the event $\{T_1> \sigma_1\}$, we define $Y_t=Y^{(1)}_t$ for all $t< \sigma_1$ and
  $$Y_{\sigma_1}=Y^{(1)}_{\sigma_1-}+ \Delta Z_{\sigma_1} + \begin{cases}
 \big(X_{\sigma_1 -} -Y^{(1)}_{\sigma_1 -}\big)_\kappa, & \mbox{if } \zeta_1\leq \frac12 \rho\big(\big(Y^{(1)}_{\sigma_1 -}-X_{\sigma_1 -}\big)_\kappa,\Delta Z_{\sigma_1} \big); \\
  \big( Y^{(1)}_{\sigma_1 -} -X_{\sigma_1 -}\big)_\kappa, & \mbox{if } \zeta_1> \frac12 \rho\big(\big(Y^{(1)}_{\sigma_1 -}-X_{\sigma_1 -}\big)_\kappa,\Delta Z_{\sigma_1} \big).
  \end{cases}$$
\end{itemize}

Next, we restrict on the event $\{T_1> \sigma_1\}$ and consider the SDE \eqref{2-prop-1.1} with $t>\sigma_1$ and $\tilde Y_{\sigma_1}=Y_{\sigma_1}$. Denote its solution by $Y^{(2)}_t$. Similarly, we take another uniformly distributed random variable $\zeta_2$ on $[0,1]$, and define $T_2=\inf\big\{t>\sigma_1: X_t=Y^{(2)}_t\big\}$ and
  \begin{align*}
  \sigma_2=\inf\bigg\{t>\sigma_1: &\, \zeta_2\leq \frac12\Big[\rho\big((Y^{(2)}_t-X_t)_\kappa, \Delta Z_t\big)+ \rho\big((X_t-Y^{(2)}_t)_\kappa,\Delta Z_t\big)\Big]\bigg\}.
  \end{align*}
In the same way, we can define the process $Y_t$ for $t\leq \sigma_2$. We repeat this procedure and note that, thanks to \eqref{prop-2.1} and \eqref{prop-2.2}, only finite many modifications have to be made in any finite interval of time. Finally, we obtain the sample paths $(Y_t)_{t\geq 0}$.
\end{proof}

Furthermore, the following conclusion indicates that the process $(X_t, Y_t)_{t\geq 0}$ is indeed the coupling process of $(X_t)_{t\ge0}$.

\begin{proposition}\label{3-prop-2}
The infinitesimal generator of the process $(X_t, Y_t)_{t\geq 0}$ is $\widetilde L_X$ defined in \eqref{SDE-coup-op}.
\end{proposition}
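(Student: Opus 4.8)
The plan is to verify that for every test function $h\in C_b^2(\R^d\times\R^d)$, applying It\^o's formula for jump processes to $h(X_t,Y_t)$ produces exactly $\widetilde L_X h(X_t,Y_t)$ as the drift of the resulting semimartingale. Recall that $(X_t,Y_t)$ solves the system \eqref{SDE-coup-eq-1} with $dL^\ast_t$ given by \eqref{Z-star-SDE}, and that $\widetilde L_X = \widetilde L_Z + \langle b(x),\nabla_x\cdot\rangle+\langle b(y),\nabla_y\cdot\rangle$ by \eqref{SDE-coup-op}. Since the drift terms $\langle b(x),\nabla_x h\rangle$ and $\langle b(y),\nabla_y h\rangle$ come directly from the bounded-variation parts $b(X_t)\,dt$ and $b(Y_t)\,dt$ of the two equations, the whole problem reduces to checking that the jump part of the dynamics contributes precisely $\widetilde L_Z h(X_t,Y_t)$; in other words it suffices to treat the case $b\equiv 0$, which is exactly the coupling process $(Z_t,Z^\ast_t)$ for the L\'evy process promised in Remark~\ref{r-coulevy}. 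So I would state and prove the result for $(Z_t,Z^\ast_t)$ and then note that adding the drift is immediate.

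First I would write out It\^o's formula for $h(Z_t,Z^\ast_t)$ using the decomposition $dZ_t=\int z\,\bar N(dt,dz,du)$ and $dZ^\ast_t=dZ_t+dL^\ast_t$ from \eqref{Z-star}. The key point is that the jumps of the pair are governed by the Poisson random measure $N(dt,dz,du)$ on $\R^d\times[0,1]$: at a jump time $t$ with mark $(z,u)$, the pair moves from $(Z_{t-},Z^\ast_{t-})$ to $(Z_{t-}+z,\,Z^\ast_{t-}+z+V_{t-}(z,u))$, where $V_{t-}(z,u)$ is the displacement recorded in the definition preceding \eqref{Z-star}. Splitting the unit interval $[0,1]$ in the $u$-variable according to the three regions
$\{u\le\frac12\rho((-U_{t-})_\kappa,z)\}$, $\{\frac12\rho((-U_{t-})_\kappa,z)<u\le\frac12[\rho((-U_{t-})_\kappa,z)+\rho((U_{t-})_\kappa,z)]\}$ and the complement, and using that $\rho(x,z)\nu(dz)=\mu_x(dz)$ together with the symmetry identity $\mu_{-x}(d(z-x))=\mu_x(dz)$ from \eqref{ee}, the compensator $\nu(dz)\,du$ integrated against the jump functional collapses exactly into the three integrals in \eqref{proofth24} — the first against $\frac12\mu_{(y-x)_\kappa}$, the second against $\frac12\mu_{(x-y)_\kappa}$, and the remainder against $\nu-\frac12\mu_{(x-y)_\kappa}-\frac12\mu_{(y-x)_\kappa}$. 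The small-jump compensator terms $\langle\nabla_x h,z\rangle\I_{\{|z|\le1\}}$ and $\langle\nabla_y h,z+(\cdot)_\kappa\rangle\I_{\{|z+(\cdot)_\kappa|\le1\}}$ that appear in \eqref{proofth24} are produced precisely by the correction term in \eqref{coup-SDE-2} (the $\nu(dz)\,du\,dt$ integral), whose purpose was noted to be exactly this; here one must be careful to match indicator functions $\I_{\{|z+(U_{t-})_\kappa|\le1\}}$ versus $\I_{\{|z|\le1\}}$, using the same changes of variable $z+(U_{t-})_\kappa\mapsto z$ that were already carried out in the excerpt.

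The main obstacle, and the place I would spend the most care, is the bookkeeping of the truncation/compensation at small jumps: because $(Z^\ast_t)$ jumps by $z+(U_{t-})_\kappa$ rather than by $z$, the natural compensator for $Z^\ast$ involves the indicator $\I_{\{|z+(U_{t-})_\kappa|\le1\}}$, whereas the generator formula \eqref{proofth24} and the Poisson measure $\bar N$ are set up with $\I_{\{|z|\le1\}}$; the correction integral in \eqref{coup-SDE-2}–\eqref{e:couppro} is engineered to reconcile these, and one must verify the integrability of this correction (finiteness of $\mu_{(U_{t-})_\kappa}(\R^d)$, which is guaranteed by \eqref{e:bound} and \eqref{prop-2.1}–\eqref{prop-2.2}) so that It\^o's formula may be legitimately applied term by term. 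I would organize the computation by fixing $\omega$ and $t$, conditioning on $U_{t-}=x-y$, and simply comparing the drift of $h(X_t,Y_t)-\int_0^t\widetilde L_X h(X_s,Y_s)\,ds$ against zero, concluding that it is a local martingale; combined with the boundedness of $h\in C_b^2$ and the local finiteness of the number of modifications established in Proposition~\ref{2-prop-1}, this identifies $\widetilde L_X$ as the generator. A final remark would record that this argument, specialized to $b\equiv0$, proves the assertion in Remark~\ref{r-coulevy} for $(Z_t,Z^\ast_t)$.
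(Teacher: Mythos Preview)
Your proposal is correct and follows essentially the same approach as the paper: both apply It\^o's formula to $h(X_t,Y_t)$ using the representation \eqref{coup-SDE-2}--\eqref{SDE-coup-eq-1}, split the $u$-integral into the three regions to recover the measures $\frac12\mu_{(y-x)_\kappa}$, $\frac12\mu_{(x-y)_\kappa}$ and $\nu-\frac12\mu_{(x-y)_\kappa}-\frac12\mu_{(y-x)_\kappa}$, and observe that the correction integral in \eqref{coup-SDE-2} fixes the mismatch between the indicators $\I_{\{|z|\le1\}}$ and $\I_{\{|z+(\cdot)_\kappa|\le1\}}$. The only organizational difference is that the paper carries the drift $b$ throughout the computation rather than reducing to $b\equiv0$ first, but this is inessential.
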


\begin{proof} According to the discussions in the previous subsection, the driven noise $(Z^\ast_t)_{t\ge0}$ defined by $Z^\ast_t= Z_t + L^\ast_t$ in the second equation of  \eqref{SDE-coup-eq-1} also enjoys the expression \eqref{coup-SDE-2} with $U_t=X_t-Y_t$ replacing $U_t=Z_t-Z^\ast_t$.
Then, the desired assertion can be proved by making use of the equations \eqref{coup-SDE-2} and \eqref{SDE-coup-eq-1} and applying the It\^{o} formula.  Indeed, denote by $\bar L_X$ the generator corresponding to $(X_t, Y_t)_{t\geq 0}$. For $h\in C_b^2(\R^d\times \R^d)$, by \eqref{coup-SDE-2} and \eqref{SDE-coup-eq-1}, we have
  \begin{align*}
  \bar L_X h(x,y)
  &= \frac12\int_{\R^d} \Big(h(x+z,y+ z+(x-y)_\kappa) -h(x,y) -\<\nabla_x h(x,y),z\> \I_{\{|z|\leq 1\}}\\
  &\hskip60pt -\<\nabla_y h(x,y),z+(x-y)_\kappa\> \I_{\{|z|\leq 1\}} \Big) \mu_{(y-x)_\kappa}(dz)\\
  &\hskip14pt +\frac12\int_{\R^d} \Big(h(x+z,y+ z+(y-x)_\kappa) -h(x,y) -\<\nabla_x h(x,y),z\> \I_{\{|z|\leq 1\}}\\
  &\hskip60pt -\<\nabla_y h(x,y),z+(y-x)_\kappa\> \I_{\{|z|\leq 1\}} \Big) \mu_{(x-y)_\kappa}(dz)\\
  &\hskip14pt +\int_{\R^d} \Big(h(x+z,y+ z) -h(x,y) -\<\nabla_x h(x,y),z\> \I_{\{|z|\leq 1\}} \\
  &\hskip60pt -\<\nabla_y h(x,y),z\> \I_{\{|z|\leq 1\}} \Big)\Big( \nu(dz)-\frac12 \mu_{(y-x)_\kappa}(dz) -\frac12 \mu_{(x-y)_\kappa}(dz)\Big)\\
  &\hskip14pt -\frac12 \int_{\R^d} \<\nabla_y h(x,y),z+(x-y)_\kappa\> \big(\I_{\{|z+(x-y)_\kappa|\leq 1\}} -\I_{\{|z|\leq 1\}} \big) \mu_{(y-x)_\kappa}(dz)\\
  &\hskip14pt -\frac12 \int_{\R^d} \<\nabla_y h(x,y),z+(y-x)_\kappa\> \big(\I_{\{|z+(y-x)_\kappa|\leq 1\}} -\I_{\{|z|\leq 1\}} \big) \mu_{(x-y)_\kappa}(dz)\\
  &\hskip14pt +\<b(x), \nabla_x h(x,y)\>+ \<b(y), \nabla_y h(x,y)\>,
  \end{align*}
where the first three integrals come from the integral in \eqref{coup-SDE-2} with respect to the Poisson random measure $\bar{N}(dt,dz,du)$, while the next two terms follow from the second integral in \eqref{coup-SDE-2}. Simplifying the above identity, we can easily see that $\bar L_X h(x,y)= \widetilde L_X h(x,y)$, therefore the proof is complete.
\end{proof}

According to the above discussions, $\widetilde L_X$ is a coupling operator of $L_X$ in \eqref{SDE-generator}, thus we deduce

\begin{corollary}
The process $(Y_t)_{t\geq 0}$ has the same finite dimensional distributions with $(X_t)_{t\geq 0}$.
\end{corollary}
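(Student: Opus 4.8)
The plan is to show that the marginal process $(Y_t)_{t\ge 0}$ solves the martingale problem associated with the generator $L_X$ of \eqref{s1} given in \eqref{SDE-generator}, and then to invoke the well-posedness of that martingale problem. Recall that, by definition, the assertion "$(Y_t)_{t\ge0}$ has the same finite dimensional distributions as $(X_t)_{t\ge0}$" should be read as: $(Y_t)_{t\ge0}$ is a realization of the (unique) solution of \eqref{s1}, albeit started from $y$ rather than from $x$.

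First I would record that $\widetilde L_X$ is indeed a coupling operator of $L_X$ at the level of functions depending only on the second variable. Taking $h(x,y)=g(y)$ with $g\in C_b^2(\R^d)$ in \eqref{SDE-coup-op}, the computation carried out in \eqref{eee} gives $\widetilde L_Z h(x,y)=L_Z g(y)$, and adding the drift contribution $\langle b(y),\nabla g(y)\rangle$ yields $\widetilde L_X h(x,y)=L_X g(y)$ for all $x,y\in\R^d$. Next, by Proposition \ref{3-prop-2} the process $(X_t,Y_t)_{t\ge0}$ is generated by $\widetilde L_X$; equivalently, one may apply the It\^o formula directly to $g(Y_t)$ using the second equation of \eqref{SDE-coup-eq-1} together with the representation \eqref{Z-star-SDE} of $L^\ast$, repeating verbatim the bookkeeping in the proof of Proposition \ref{3-prop-2} with $h(x,y)=g(y)$. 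Either way one obtains that, for every $g\in C_b^2(\R^d)$,
$$M_t^g:=g(Y_t)-g(Y_0)-\int_0^t L_X g(Y_s)\,ds,\qquad t\ge 0,$$
is a local martingale; since $g$ is bounded and the relevant jump compensator is $\sigma$-finite, a routine localization upgrades this to a true martingale. Hence the law of $(Y_t)_{t\ge0}$ on the Skorokhod space solves the martingale problem for $(L_X,\delta_y)$.

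Finally I would conclude by uniqueness. Under the standing assumptions the SDE \eqref{s1} admits a non-explosive, pathwise unique strong solution from every deterministic starting point; by the Yamada--Watanabe theorem this forces uniqueness in law, so the martingale problem for $(L_X,\delta_y)$ has exactly one solution, namely the distribution of the solution of \eqref{s1} issued from $y$. Therefore $(Y_t)_{t\ge0}$ has the claimed finite dimensional distributions. The one step requiring care is the verification of the martingale property for $M^g_t$: one must apply the It\^o formula for discontinuous semimartingales to $g(Y_t)$ and check that the absolutely continuous drift produced by $dL^\ast_t$ in \eqref{Z-star-SDE}, combined with the three families of jumps coming from $\bar N$, recombines precisely into $L_X g(Y_t)$ — this is exactly the content of identity \eqref{eee}, together with the matching of compensated and uncompensated parts already performed in the passage leading to \eqref{Z-star}.
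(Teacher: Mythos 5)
Your argument is correct and follows essentially the same route as the paper: the corollary there is deduced directly from the marginality identity \eqref{eee} (i.e.\ $\widetilde L_X h(x,y)=L_X g(y)$ for $h(x,y)=g(y)$, $g\in C_b^2(\R^d)$) together with the generator identification in Proposition \ref{3-prop-2}, and your martingale-problem formulation combined with Yamada--Watanabe simply makes explicit the uniqueness-in-law step that the paper leaves implicit. The only point to phrase carefully is that $L_X g$ need not be bounded for $g\in C_b^2(\R^d)$ (the drift $b$ may be unbounded), so the passage from local to true martingale should be handled by stopping/localization rather than by boundedness of the integrand; this is routine and does not affect the conclusion.
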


Summarizing all the conclusions above, the coupling operator $\widetilde L_X$ generates a non-explosive coupling process $(X_t,Y_t)_{t\ge0}$ of the process $(X_t)_{t\ge0}$, and $X_t=Y_t$ for any $t\ge T,$ where $T=\inf\{t\ge0: X_t=Y_t\}$ is the coupling time of the process $(X_t,Y_t)_{t\ge0}$.

\begin{remark}\label{r-coulevy} Since the drift term $b$ can be chosen to be $b(x)=0$ for all $x\in \R^d$ in the proofs of Propositions \ref{2-prop-1} and \ref{3-prop-2}, one can claim that the process $(Z_t, Z^\ast_t)_{t\ge0}$ constructed in Subsection \ref{cou:pro} is a Markov coupling process for the L\'evy process $Z$, and its infinitesimal generator is $\widetilde L_Z$ defined in \eqref{proofth24}. In particular, the process $(Z^\ast_t)_{t\ge0}$ defined by \eqref{e:couppro} is also a L\'evy process on $\R^d$ with L\'evy measure $\nu$.  \end{remark}

\section{Exponential convergence in Wasserstein-type distances via coupling}\label{genecop}

By making full use of the coupling operator and the coupling process
constructed in Subsection \ref{sectioncousde}, we will provide in
this part a general result for exponential convergence in Wasserstein distances including the total variation.

\subsection{Preliminary calculations}\label{sec-pc}
Let $\widetilde{L}_X $ be the coupling operator given in \eqref{SDE-coup-op}. We will compute the expression of $\widetilde{L}_X f(|x-y|)$ for any $f\in C_b^1([0,\infty))$ with $f\ge0$.

Let $(X_t,Y_t)_{t\ge0}$ be the coupling process corresponding to the operator $\widetilde{L}_X $ constructed in Subsection \ref{sectioncousde}.
Recall that for any $t\ge0$, $\kappa\in (0,\kappa_0]$ and $z,u\in\R^d$,  $U_t=X_t-Y_t$ and
  $$V_{t}(z,u)=(U_{t})_{\kappa} \Big[ \I_{\{u\le \frac12 \rho((-U_{t})_{\kappa},z)\}} - \I_{\{\frac12 \rho((-U_{t})_{\kappa},z)< u\le \frac12 [\rho((-U_{t})_{\kappa},z)+\rho((U_{t})_{\kappa},z)] \}}\Big].$$ In particular,
  $$V_0(z,u)=(x-y)_{\kappa} \Big[ \I_{\{u\le \frac12 \rho((y-x)_{\kappa},z)\}} - \I_{\{\frac12 \rho((y-x)_{\kappa},z)< u\le \frac12 [\rho((y-x)_{\kappa},z)+\rho((x-y)_{\kappa},z)] \}}\Big].$$
It follows from the system \eqref{SDE-coup-eq-1} that
  \begin{equation*}\label{difference}
  \begin{split}
  dU_t &=(b(X_t)-b(Y_t))\, dt - \int_{\R^d\times [0,1]} V_{t-}(z,u)\, {N}(dt,dz,du).
  \end{split}
  \end{equation*}

Take $f\in C_b^1([0,\infty))$ with $f\ge0$. By the It\^{o} formula,
  \begin{align*}
  f(|U_t|)&= f(|x-y|)+ \int_0^t \frac{f'(|U_s|)}{|U_s|} \<U_s, b(X_s)-b(Y_s)\> \, ds\\
  &\quad +\int_0^t\int_{\R^d\times [0,1]} \big[f(|U_{s-}- V_{s-}(z,u)|)-f(|U_{s-}|)\big]\, N(ds,dz,du).
  \end{align*}
Therefore,
  \begin{align*}
  \widetilde L_X f(|x-y|) &= \frac{f'(|x-y|)}{|x-y|}\< b(x)-b(y),x-y\>\\
  &\quad + \int_{\R^d\times [0,1]} \big[f(|(x-y)-V_0(z,u)|) -f(|x-y|)\big]\, \nu(dz) \, du.
  \end{align*}
By the definition of $V_0$, the second term on the right hand side is equal to
  \begin{align*}
  &\hskip14pt \frac12 \int_{\R^d}\bigg[\big(f(|(x-y)-(x-y)_\kappa|) -f(|x-y|)\big)\rho((y-x)_{\kappa},z) \\
  &\hskip50pt + \big(f(|(x-y)+(x-y)_\kappa|) -f(|x-y|)\big)\rho((x-y)_{\kappa},z) \bigg] \nu(dz)\\
  &= \frac12 \Big[\big(f(|(x-y)-(x-y)_\kappa|) -f(|x-y|)\big)\mu_{(y-x)_{\kappa}}(\R^d)\\
  &\hskip50pt +\big(f(|(x-y)+(x-y)_\kappa|) -f(|x-y|)\big)\mu_{(x-y)_{\kappa}}(\R^d)\Big].
  \end{align*}
Thanks to the fact (also see Corollary \ref{C:mea}) that
  \begin{equation*}\label{eeff}
  \mu_{(y-x)_{\kappa}}(\R^d)= \mu_{(-(x-y))_{\kappa}}(\R^d) = \mu_{(x-y)_{\kappa}}(\R^d),
  \end{equation*}
we can finally conclude that, for any $x$, $y\in\R^d$ with $x\neq y$,
  \begin{equation}\label{proofth2544}\allowdisplaybreaks\aligned
  \widetilde{L}_X f(|x-y|)
    &=\frac{1}{2} \mu_{(x-y)_{\kappa}}(\R^d)\Big[f\big(|x-y|+\kappa\wedge |x-y|\big) +  f\big(|x-y|-\kappa\wedge |x-y|\big)\\
    &\hskip85pt  - 2f(|x-y|)\Big] +\frac{f'(|x-y|)}{|x-y|}\langle b(x)-b(y),x-y\rangle.
  \endaligned
  \end{equation}
Note that, by \eqref{proofth2544}, $\widetilde{L}_X f(|x-y|)$ is pointwise well defined for any $f\in C^1([0,\infty))$.

\subsection{General result}

The following theorem provides us a general result for exponential convergence in Wasserstein-type distance via the coupling method. Recall the definition of $J(s)$ at the beginning of Section 1.1.

\begin{theorem}\label{p-w}
Assume that the drift term $b$ satisfies ${\mathbf B(\Phi_1, \Phi_2, l_0)}$, i.e. \eqref{th111}, and that \eqref{th10} holds for the L\'{e}vy measure $\nu$ with some $\kappa_0>0$. For any $n\ge 1$, let $\psi_n\in C^1([0,\infty))$ be increasing on $[0,\infty)$, satisfying $\psi_n(0)=0$ and
  \begin{equation}\label{p-w-1}
  \psi_n(r+s)+\psi_n(r-s)-2\psi_n(r)\leq 0\quad \mbox{for all } r\geq 1/n,\, 0<s\le r\wedge \kappa_0.
\end{equation}
Suppose that there are $\lambda>0$ and $\kappa\in (0,\kappa_0]$ such that for $n\ge l_0^{-1}\vee l_0$ large enough, $\psi_n$ satisfies the condition ${\mathbf C(\lambda, \kappa, n)}$ on $[1/n,n]$ as follows:
\begin{itemize}

\item[\rm(i)] for $r\in[1/n,l_0)$,
  $$\frac{1}{2}J(\kappa\wedge r) \big[\psi_n(r+r\wedge \kappa)+\psi_n(r-r\wedge \kappa)-2\psi_n(r) \big] +\Phi_1(r)\psi_n'(r)\le -\lambda \psi_n(r);$$

\item[\rm(ii)] for $ r\in[l_0,n]$,
  $$-\Phi_2(r)\psi_n'(r)\le -\lambda \psi_n(r).$$

\end{itemize}
Then for any $t>0$ and $x,y\in\R^d$,
  \begin{equation}\label{ep-w}
  W_{\psi_\infty}(\delta_xP_t, \delta_y P_t)\leq \psi_{\infty}(|x-y|)e^{-\lambda t},
  \end{equation}
where  $\psi_\infty=\liminf_{n\to\infty}\psi_n.$
\end{theorem}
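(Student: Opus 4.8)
The plan is to use that the process $(X_t,Y_t)_{t\ge0}$ built in Section \ref{sectioncousde} is a coupling of $(X_t)_{t\ge0}$, so $X_t\sim\delta_xP_t$, $Y_t\sim\delta_yP_t$, and, setting $U_t=X_t-Y_t$, the definition of $W_{\psi_\infty}$ as an infimum over couplings gives
$$W_{\psi_\infty}(\delta_xP_t,\delta_yP_t)\le \Ee\big[\psi_\infty(|U_t|)\big].$$
So it is enough to bound the right-hand side by $e^{-\lambda t}\psi_\infty(|x-y|)$; we may assume $x\ne y$, the case $x=y$ being trivial. Since $\psi_\infty=\liminf_n\psi_n$, the idea is to get an exponential estimate for each fixed large $n$ and then let $n\to\infty$ via Fatou. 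The first step is to convert the hypotheses into a pointwise differential inequality for $\psi_n$: in the explicit formula \eqref{proofth2544} for $\widetilde L_X f(|x-y|)$ the second-difference term is nonpositive by \eqref{p-w-1} applied with $s=\kappa\wedge|x-y|\le\kappa_0\wedge|x-y|$, so using $\mu_{(x-y)_\kappa}(\R^d)\ge J(\kappa\wedge|x-y|)\ge0$ it is bounded above by $\tfrac12 J(\kappa\wedge|x-y|)\big[\psi_n(|x-y|+\kappa\wedge|x-y|)+\psi_n(|x-y|-\kappa\wedge|x-y|)-2\psi_n(|x-y|)\big]$; for the drift term, ${\mathbf B(\Phi_1,\Phi_2,l_0)}$ (i.e.\ \eqref{th111}) together with $\psi_n'\ge0$ gives $\tfrac{\psi_n'(|x-y|)}{|x-y|}\langle b(x)-b(y),x-y\rangle\le\psi_n'(|x-y|)\big(\Phi_1(|x-y|)-[\Phi_1+\Phi_2](|x-y|)\I_{\{|x-y|\ge l_0\}}\big)$. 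Feeding these into conditions (i) and (ii) of ${\mathbf C(\lambda,\kappa,n)}$ (splitting according to $|x-y|\in[1/n,l_0)$ or $|x-y|\in[l_0,n]$) yields, for all $n$ large enough,
$$\widetilde L_X\psi_n(|x-y|)\le-\lambda\,\psi_n(|x-y|)\qquad\text{whenever }1/n\le|x-y|\le n.$$

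The second step is localization. Let $\tau_n=\inf\{t\ge0:|U_t|\notin[1/n,n]\}$. Since $U$ has no Brownian part (its finite-variation part is the absolutely continuous drift $b(X_t)-b(Y_t)$ and its jumps are of the bounded form $\pm(U_{t-})_\kappa$, occurring at the finite rate $\mu_{(U_{t-})_\kappa}(\R^d)<\infty$ while $|U|\ge1/n$), Itô's formula applies to the $C^1$ function $u\mapsto\psi_n(|u|)$ away from the origin and gives
$$e^{\lambda(t\wedge\tau_n)}\psi_n(|U_{t\wedge\tau_n}|)=\psi_n(|x-y|)+\int_0^{t\wedge\tau_n}e^{\lambda s}\big(\lambda\psi_n+\widetilde L_X\psi_n\big)(|U_s|)\,ds+M_{t\wedge\tau_n}$$
with $M$ a local martingale. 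For $s\le\tau_n$ the integrand is $\le0$ by the displayed inequality, so $e^{\lambda(t\wedge\tau_n)}\psi_n(|U_{t\wedge\tau_n}|)$ is a nonnegative local supermartingale, hence a genuine supermartingale; taking $n$ large enough that $1/n\le|x-y|\le n$ we get $\Ee\big[e^{\lambda(t\wedge\tau_n)}\psi_n(|U_{t\wedge\tau_n}|)\big]\le\psi_n(|x-y|)$, and dropping the nonnegative contribution of $\{\tau_n\le t\}$ leaves
$$\Ee\big[\I_{\{\tau_n>t\}}\psi_n(|U_t|)\big]\le e^{-\lambda t}\psi_n(|x-y|).$$

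The third step is the limit $n\to\infty$. Let $T=\inf\{t\ge0:X_t=Y_t\}$ be the coupling time, after which $U\equiv0$; since $U$ is non-explosive, $|U_s|>0$ for $s<T$, and a jump cannot move $U$ away from $0$ (the jump size contains the factor $(U_{s-})_\kappa$), so on any $[0,t]$ with $t<T$ the càdlàg path $|U_s|$ stays in a compact subinterval of $(0,\infty)$ and therefore $\tau_n>t$ for every $n$ large (depending on $\omega$); hence $\tau_n\uparrow T$ and $\I_{\{\tau_n>t\}}\uparrow\I_{\{|U_t|>0\}}$ a.s. Combined with $\psi_\infty(0)=0$ (as $\psi_n(0)=0$), this gives $\liminf_n\I_{\{\tau_n>t\}}\psi_n(|U_t|)=\psi_\infty(|U_t|)$ a.s.; Fatou's lemma applied to the nonnegative variables $\I_{\{\tau_n>t\}}\psi_n(|U_t|)$, together with $\liminf_n\psi_n(|x-y|)=\psi_\infty(|x-y|)$, yields $\Ee[\psi_\infty(|U_t|)]\le e^{-\lambda t}\psi_\infty(|x-y|)$, which is \eqref{ep-w}. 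The genuinely delicate point, and the main obstacle, is exactly this passage to the limit: one must \emph{not} try to estimate $\psi_n(|U_t|)$ on the exceptional set $\{\tau_n\le t\}$ — where no bound is available because $\psi_n$ is controlled only on $[1/n,n]$ — but instead keep the indicator $\I_{\{\tau_n>t\}}$ throughout and rely on the path-regularity argument showing $\tau_n\uparrow T$, so that the indicator converges to $1$ precisely on $\{|U_t|>0\}$ while $\psi_\infty$ vanishes on its complement. The supermartingale/Itô justification, being purely about the pure-jump structure of $U$, is routine by comparison.
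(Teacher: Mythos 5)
Your proposal is correct and takes essentially the same route as the paper: you first derive the pointwise inequality $\widetilde L_X\psi_n(|x-y|)\le-\lambda\,\psi_n(|x-y|)$ on $1/n\le|x-y|\le n$ from \eqref{proofth2544}, \eqref{p-w-1}, \eqref{th111} and ${\mathbf C(\lambda,\kappa,n)}$, and then combine the coupling process of Section \ref{sectioncousde} with exit times of $[1/n,n]$, a supermartingale (Dynkin-type) estimate and Fatou's lemma. The only organizational difference is in the passage to the limit: the paper applies the estimate to $\psi_m$ with the stopping time $T_n$ for $m\ge n$, keeps the stopped boundary term and lets $m\to\infty$ then $n\to\infty$, whereas you use a single index, discard the contribution of $\{\tau_n\le t\}$ via the indicator $\I_{\{\tau_n>t\}}$ and argue $\tau_n\uparrow T$ — both versions are valid and lead to \eqref{ep-w}.
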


In applications, the limit function $\psi_{\infty}$ is finite on $[0,\infty)$, hence \eqref{ep-w} implies the finiteness of $W_{\psi_\infty}(\delta_xP_t, \delta_y P_t)$ for all $x,y\in \R^d$ and $t>0$.

\begin{proof}[Proof of Theorem $\ref{p-w}$]
\emph{Step $1$.} Let $\widetilde{L}=\widetilde{L}_X $ be the coupling operator given in \eqref{SDE-coup-op}. We first prove that for $n\ge l_0^{-1}\vee l_0$ large enough and for all $x,y\in \R^d$ with $1/n\le |x-y|\le n$,
  \begin{equation}\label{esope}
  \widetilde{L}\psi_n(|x-y|)\le - \lambda \psi_n(|x-y|).
  \end{equation}
For this, we consider the following two cases.
\begin{itemize}
\item[(a)] $1/n\le |x-y|< l_0$. The definition of $J(s)$ leads to
  $$\mu_{(x-y)_{\kappa}}(\R^d)= \big[{\nu}\wedge(\delta_{\left(1\wedge \frac{\kappa}{|x-y|}\right)(x-y)}\ast {\nu})\big](\R^d)\geq J(|x-y|\wedge \kappa).$$
Thus by \eqref{proofth2544}, \eqref{p-w-1} and \eqref{th111},
  \begin{align*}
  \widetilde{L} \psi_n(|x-y|)&\leq \frac12 J(|x-y|\wedge \kappa)\Big[\psi_n(|x-y|+|x-y|\wedge \kappa)\\
  &\hskip40pt +\psi_n(|x-y|-|x-y|\wedge \kappa)-2\psi_n(|x-y|) \Big]\\
  &\hskip13pt + \Phi_1(|x-y|)\psi_n'(|x-y|)\\
  &\leq -\lambda \psi_n(|x-y|),
  \end{align*}
where we used the condition (i) in the last inequality.

\item[(b)] $l_0\le |x-y|\le n$. In view of \eqref{proofth2544}, it is obvious from the conditions \eqref{p-w-1} and \eqref{th111} that
  $$\widetilde{L}\psi_n(|x-y|)\le-\Phi_2(|x-y|)\psi'_n(|x-y|)\le - \lambda \psi_n(|x-y|),$$
where the last inequality follows from (ii).
\end{itemize}
Then \eqref{esope} is proved by summarizing these arguments.

\emph{Step $2$.} Based on \eqref{esope}, the proof of the desired
assertion \eqref{ep-w} is similar to that of \cite[Theorem 1.3]{LW}
or \cite[Theorem 1.2]{Wang15} by some slight modifications. For the
sake of completeness, we present the details here. Let
$(X_t,Y_t)_{t\ge0}$ be the coupling process constructed in
Subsection \ref{sectioncousde}. It suffices to verify that for
$x,y\in\R^d$ with $|x-y|>0$ and any $t>0$,
  $$ \widetilde{\Ee}^{(x,y)}\psi_\infty(|X_t-Y_t|)\leq \psi_\infty(|x-y|)e^{-\lambda t},$$
where $\widetilde{\Ee}^{(x,y)}$ is the expectation of $(X_t,Y_t)_{t\ge0}$ starting from $(x,y)$.

For any $t>0$ set $r_t=|U_t|=|X_t-Y_t|$, and for $n\geq 1$ define the stopping time
  $$T_n=\inf\{t>0: r_t\notin [1/n, n]\}.$$
Since the coupling process $(X_t,Y_t)_{t\ge0}$ is non-explosive, we have $T_n\uparrow T$ a.s. as $n\to\infty$, where $T$ is the coupling time of the process $(X_t,Y_t)_{t\ge0}$.

For any $x,$ $y\in\R^d$ with $|x-y|>0$, we take $n\ge l_0^{-1}\vee l_0$ large enough such that $1/n<|x-y|<n$.  For $m\ge n$, let $\psi_m$ be the function and $\lambda$ be the constant given in the statement. Then,
  $$\aligned
  &\widetilde{\Ee}^{(x,y)}\big[e^{\lambda(t\wedge T_{n})} \psi_m(|X_{t\wedge T_{n}}-Y_{t\wedge T_{n}}|)\big]\\
  &=\psi_m(|x-y|)+\widetilde{\Ee}^{(x,y)}\bigg(\int_0^{t\wedge T_{n}} e^{\lambda s}\big[\lambda\psi_m(|X_{s}-Y_{s}|)+\widetilde{L} \psi_m(|X_{s}-Y_{s}|)\big]\,d s\bigg)\\
  &\le \psi_m(|x-y|),\endaligned$$ where the inequality above follows from \eqref{esope}.
Hence,
  $$\widetilde{\Ee}^{(x,y)} \big[e^{\lambda (t\wedge T_{n})}\psi_m(r_{t\wedge T_n})\big]\leq \psi_m(r_0).$$
 Thus by Fatou's lemma, first letting $m\to \infty$ and then $n\to\infty$ in the above inequality gives us
  $$ \widetilde{\Ee}^{(x,y)}\big(e^{\lambda(t\wedge T)}\psi_\infty(r_{t\wedge T})\big)\leq \psi_\infty(r_0). $$
Thanks to our convention that $Y_t=X_t$ for $t\geq T$, we have $r_t=0$ and so $\psi_\infty(r_t)=0$ for all $t\geq T$, which implies
  $$ \widetilde{\Ee}^{(x,y)}\big(e^{\lambda(t\wedge T)}\psi_\infty(r_{t\wedge T})\big)=e^{\lambda t} \widetilde{\Ee}^{(x,y)} \big(\psi_\infty(r_t)\I_{\{T>t\}}\big) = e^{\lambda t}\widetilde{\Ee}^{(x,y)} \psi_\infty(r_t).$$
Therefore, the desired assertion \eqref{ep-w} follows from all the discussions above.
\end{proof}

\section{General results and proofs}\label{section3}

\subsection{Proofs of results related to Wasserstein-type distances}

The following result is crucial for constructing test functions $\psi_n$ in Theorem \ref{p-w}.

\begin{lemma}\label{lem-test-funct}
Let $g\in C([0,2l_0])\cap C^3((0,2l_0])$ be satisfying $g(0)=0$ and
  \begin{equation}\label{lem-test-funct.0}
  g'(r)\geq 0,\ g''(r)\leq 0 \mbox{ and } g'''(r)\geq 0\quad \mbox{for any } r\in (0,2l_0].
  \end{equation}
Then for all $c_1, c_2>0$ the function
  \begin{equation}\label{lem-test-funct.1}
  \psi(r):=\psi_{c_1,c_2}(r)= \begin{cases}
  c_1 r+ \int_0^r e^{-c_2 g(s)}\, ds ,& r\in [0, 2l_0],\\
 \psi(2l_0)+ \psi'(2l_0) (r-2l_0), & r\in(2l_0,\infty)
  \end{cases}
  \end{equation}
satisfies
\begin{itemize}
\item[\rm (1)] $\psi\in C^1([0,\infty))$ and $c_1r\le \psi(r)\le (c_1+1)r$ on $[0,2l_0]$;
\item[\rm (2)]$\psi'> 0$, $\psi''\leq 0,$  $\psi'''\geq 0$ and $\psi^{(4)}\leq 0$ on $(0,2l_0]$;
\item[\rm (3)] for any $0\le \delta\le r$, $$ \psi(r+\delta)+\psi(r-\delta)-2\psi(r)\leq 0;$$
\item[\rm (4)] for any  $0\leq \delta \leq r\leq l_0$,
  $$\psi(r+\delta) +\psi(r-\delta)-2\psi(r)\leq \psi''(r)\delta^2.$$
\end{itemize}
\end{lemma}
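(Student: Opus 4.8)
The plan is to verify properties (1)--(4) one at a time, exploiting the explicit form of $\psi$ on $[0,2l_0]$ and the affine extension beyond $2l_0$. First, for (1): on $[0,2l_0]$ one has $\psi'(r)=c_1+e^{-c_2g(r)}$, which is continuous on $(0,2l_0]$ and extends continuously to $r=0$ (using $g\in C([0,2l_0])$), and the two pieces match in value and first derivative at $r=2l_0$ by construction, so $\psi\in C^1([0,\infty))$. The bounds $c_1r\le\psi(r)\le(c_1+1)r$ on $[0,2l_0]$ follow immediately from $0<e^{-c_2g(s)}\le1$, which holds because $g\ge 0$ on $[0,2l_0]$: indeed $g'\ge0$ and $g(0)\ge0$ (here I would note $g(0)\ge 0$ should be read from $g\in C([0,2l_0])$ together with $g'\ge 0$; if $g(0)$ could be negative one only gets $e^{-c_2 g(s)}\le e^{-c_2 g(0)}$, so the clean bound uses $g\ge 0$).

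For (2), I differentiate: on $(0,2l_0]$, $\psi'(r)=c_1+e^{-c_2g}>0$; $\psi''(r)=-c_2 g' e^{-c_2 g}\le0$ since $g'\ge0$; $\psi'''(r)=c_2 e^{-c_2 g}\big(c_2 (g')^2 - g''\big)\ge0$ since $g''\le0$; and $\psi^{(4)}(r)= c_2 e^{-c_2 g}\big(-c_2 g'(c_2 (g')^2 - g'') + 2c_2 g' g'' - g'''\big)$, which I would regroup as $c_2 e^{-c_2 g}\big(-c_2^2 (g')^3 + 3 c_2 g' g'' - g'''\big)$; each of the three terms $-c_2^2(g')^3$, $3c_2 g' g''$, $-g'''$ is $\le 0$ under \eqref{lem-test-funct.0}, so $\psi^{(4)}\le0$ on $(0,2l_0]$. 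These are routine computations; the only care needed is to keep track of signs.

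For (3), the inequality $\psi(r+\delta)+\psi(r-\delta)-2\psi(r)\le0$ for $0\le\delta\le r$ is concavity of $\psi$ along the segment. On $[0,2l_0]$ this is $\psi''\le0$ from (2); for the affine part $\psi$ is linear so the second difference vanishes; the subtle case is when $r-\delta\le 2l_0\le r+\delta$, i.e. the segment straddles the kink $2l_0$. Here I would use that $\psi'$ is nonincreasing on all of $[0,\infty)$ (it decreases on $[0,2l_0]$ by $\psi''\le0$ and is constant afterwards, and $\psi\in C^1$ so there is no upward jump at $2l_0$), hence $\psi$ is concave on $[0,\infty)$, which gives (3) directly via $\psi(r\pm\delta)-\psi(r)=\pm\int_0^\delta \psi'(r\pm t)\,dt$ and monotonicity of $\psi'$.

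For (4), the quantitative bound for $0\le\delta\le r\le l_0$: write $\psi(r+\delta)+\psi(r-\delta)-2\psi(r)=\int_0^\delta\big(\psi'(r+t)-\psi'(r-t)\big)\,dt = \int_0^\delta\!\int_{-t}^{t}\psi''(r+s)\,ds\,dt$, and since $r+s$ ranges over $[r-\delta,r+\delta]\subseteq[0,2l_0]$ where $\psi'''\ge0$ (so $\psi''$ is nondecreasing), we have $\psi''(r+s)\le\psi''(r+t)\le \psi''(r)$ is the wrong direction — instead I would use $\psi''$ nondecreasing to bound $\psi''(r+s)\le\psi''(r)$ for $s\le 0$ and need the reverse for $s\ge0$; the cleaner route is: the second difference equals $\int_0^\delta\!\int_0^t\big(\psi''(r+s)+\psi''(r-s)\big)\,ds\,dt$, and by $\psi^{(4)}\le0$ the map $s\mapsto \psi''(r+s)+\psi''(r-s)$ is nonincreasing in $|s|$ hence $\le 2\psi''(r)$ at $s=0$, giving $\int_0^\delta\!\int_0^t 2\psi''(r)\,ds\,dt=\psi''(r)\delta^2$. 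This last step — correctly invoking $\psi^{(4)}\le 0$ to get the Taylor-type bound with the right sign — is the main obstacle, since a careless Taylor expansion would only yield a bound with the remainder term of indeterminate sign; the concavity-of-the-even-part argument via $\psi^{(4)}\le0$ is what makes it work, and this is precisely why property (2) was stated with the fourth-derivative condition included.
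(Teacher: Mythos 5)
Your proposal is correct and follows essentially the same route as the paper: the sign computations for $\psi',\psi'',\psi''',\psi^{(4)}$ are identical, (3) rests on $\psi'$ being nonincreasing (you use the integral representation where the paper uses the mean value theorem), and (4) hinges, exactly as in the paper, on $\psi'''$ being nonincreasing because $\psi^{(4)}\le 0$ — you phrase this via the second-difference double integral, the paper via a third-order Taylor expansion with Lagrange remainders, which is only a cosmetic difference. Your side remark on needing $g\ge 0$ for the upper bound in (1) is a fair observation (the paper implicitly has $g(0)=0$, $g'\ge0$ in its application) and does not affect the validity of your argument.
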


\begin{proof} (1) is trivial. The property (2) follows from \eqref{lem-test-funct.0} and the definition of $\psi$ by direct calculations. The assertion (3) is trivial if $\delta=0$, thus we assume $\delta>0$ in the sequel. By the mean value formula, there exist constants $\xi_1\in (r,r+\delta)$ and $\xi_2\in (r-\delta, r)$ such that
  $$\psi(r+\delta)-\psi(r)=\psi'(\xi_1)\delta$$
and
  $$\psi(r-\delta)-\psi(r)= -\psi'(\xi_2)\delta.$$
Therefore,
  $$\psi(r+\delta)+\psi(r-\delta)-2\psi(r)=(\psi'(\xi_1)-\psi'(\xi_2))\delta\le 0,$$
since $\psi'$ is decreasing due to the definition of $\psi$.

To prove (4), we will still assume $\delta>0$. Similar to the proof of (3), by the Taylor formula, there exist constants $\xi_1\in (r,r+\delta)$ and $\xi_2\in (r-\delta, r)$ such that
  \begin{align*}
  \psi(r+\delta)&=\psi(r)+ \psi'(r)\delta +\frac12 \psi''(r)\delta^2 +\frac16 \psi'''(\xi_1)\delta^3,\\
  \psi(r-\delta)&=\psi(r)- \psi'(r)\delta +\frac12 \psi''(r)\delta^2 -\frac16 \psi'''(\xi_2)\delta^3.
  \end{align*}
Therefore,
  $$\psi(r+\delta) +\psi(r-\delta)-2\psi(r)= \psi''(r)\delta^2 +\frac{\delta^3}6 \big[\psi'''(\xi_1)- \psi'''(\xi_2)\big] \leq \psi''(r)\delta^2$$
since $\psi'''$ is decreasing due to (2).
\end{proof}

In the next theorem we establish the exponential contraction in $L^1$-Wasserstein distance which is more general than assertion (a) in Theorem \ref{th1}.

\begin{theorem}\label{thtpw}
Assume that
\begin{itemize}
\item[\rm (a)] \eqref{th10} holds for the L\'{e}vy measure $\nu$ with some $\kappa_0>0$;
\item[\rm (b)] the drift $b$ satisfies ${\mathbf B(\Phi_1(r), K_2 r, l_0)}$ for some constants $ K_2>0,\,l_0\ge 0$, and a nonnegative concave function $\Phi_1\in C([0,2l_0])\cap C^2((0,2l_0])$ such that $\Phi_1(0)=0$ and $\Phi''_1$ is nondecreasing;
\item[\rm (c)]  there is a nondecreasing and concave function $\sigma\in C([0,2l_0])\cap C^2((0,2l_0])$ such that for some $\kappa\in (0,\kappa_0]$, one has
  \begin{equation}\label{thtpw.1}
  \sigma(r)\leq \frac1{2r} J(\kappa\wedge r) (\kappa\wedge r)^2, \quad r\in (0, 2l_0];
  \end{equation}
and the integrals $g_1(r)=\int_{0}^r \frac{1}{\sigma(s)}\,ds$ and $g_2(r)=\int_0^r \frac{\Phi_1(s)}{s\sigma(s)}\, ds$ are well defined for all $r\in [0,2l_0]$.
\end{itemize}
Set $c_2=(2K_2)\wedge g_1(2l_0)^{-1}$ and $c_1=e^{-c_2 g(2l_0)}$, where the function $g$ is defined by
  $$g(r)=g_1(r)+\frac2{c_2} g_2(r),\quad r\in (0,2l_0].$$ Let $\psi$ be defined by \eqref{lem-test-funct.1} with $c_1,c_2$ and $g$ given above.
Then for any $x,y\in\R^d$ and $t>0$,
  $$  W_\psi(\delta_xP_t, \delta_yP_t)\le e^{-\lambda t}  \psi(|x-y|)$$
and
  $$  W_1(\delta_xP_t, \delta_yP_t)\le C e^{-\lambda t} |x-y|,$$
where
  \begin{equation}\label{pwcon}\begin{split}
   C& =\frac{1+c_1}{2c_1}=\frac1{2}\Big(1+\exp\big\{g(2l_0) \big[ (2K_2)\wedge g_1(2l_0)^{-1} \big] \big\}\Big),\\
    \lambda&=\frac{c_2}{1+e^{c_2 g(2l_0)}}= \frac{(2K_2)\wedge g_1(2l_0)^{-1}}{1+ \exp\big\{ g(2l_0) \big[ (2K_2)\wedge g_1(2l_0)^{-1} \big] \big\}}.
  \end{split}
  \end{equation}

\end{theorem}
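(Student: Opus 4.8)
The plan is to apply the general result Theorem~\ref{p-w} with a carefully chosen sequence of test functions $\psi_n$, all obtained by truncating a single function $\psi = \psi_{c_1,c_2}$ built via Lemma~\ref{lem-test-funct}. First I would set $g(r) = g_1(r) + \tfrac{2}{c_2} g_2(r)$ on $(0,2l_0]$ and check that $g$ satisfies the hypotheses of Lemma~\ref{lem-test-funct}, namely $g\in C([0,2l_0])\cap C^3((0,2l_0])$ with $g'\ge 0$, $g''\le 0$, $g'''\ge 0$. The first derivative is immediate from the integral representations; the key point is that $g_1' = 1/\sigma$ and $g_2' = \Phi_1/(s\sigma)$ are both nonincreasing, which uses the concavity and monotonicity assumptions on $\sigma$ and $\Phi_1$ in hypothesis (c) and (b), plus the fact that $\Phi_1(s)/s$ is nonincreasing (itself a consequence of $\Phi_1$ concave with $\Phi_1(0)=0$). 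Differentiating once more and using $\Phi_1'' $ nondecreasing and $\sigma$ concave gives $g'''\ge 0$. Then Lemma~\ref{lem-test-funct} produces $\psi=\psi_{c_1,c_2}$ with properties (1)--(4); in particular $\psi\in C^1([0,\infty))$ is increasing with $\psi(0)=0$, and (3) gives the concavity-type inequality \eqref{p-w-1}.

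Next I would define, for $n\ge (2l_0)\vee l_0^{-1}\vee l_0$, the truncation $\psi_n := \psi$ (the function $\psi$ is already affine beyond $2l_0$, so no further truncation is needed; one just checks $\psi_n \in C^1([0,\infty))$, increasing, $\psi_n(0)=0$, and that $\psi_\infty = \liminf_n \psi_n = \psi$). The heart of the proof is verifying the condition $\mathbf{C}(\lambda,\kappa,n)$ of Theorem~\ref{p-w}, i.e.\ (i) on $[1/n, l_0)$ and (ii) on $[l_0, n]$, with $\lambda$ as in \eqref{pwcon}. For (ii), since $\Phi_2(r) = K_2 r$ and $\psi$ is affine with slope $\psi'(2l_0) = c_1 + e^{-c_2 g(2l_0)} = 2c_1$ on $[2l_0,\infty)$ and on $[l_0,2l_0]$ has $\psi'(r) \le \psi'(l_0)$, I need $-K_2 r \psi'(r) \le -\lambda\psi(r)$, i.e.\ $\lambda \le K_2 r\psi'(r)/\psi(r)$; because $r\psi'(r)/\psi(r)$ can be bounded below using $\psi(r) \le (c_1+1)r$-type estimates and monotonicity of $\psi'$, this reduces to $\lambda \le 2K_2 c_1/(c_1+\text{(something)})$, which the choice of $\lambda$ respects once one also accounts for the $c_2 g_1(2l_0)^{-1}$ alternative in the definition of $c_2$. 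For (i), on $r\in[1/n,l_0)$ I would use part (4) of Lemma~\ref{lem-test-funct} to bound the second-difference term by $\psi''(r)(\kappa\wedge r)^2$, then use \eqref{thtpw.1} to replace $\tfrac12 J(\kappa\wedge r)(\kappa\wedge r)^2$ by $r\sigma(r)$, obtaining
\begin{equation*}
\tfrac12 J(\kappa\wedge r)\big[\psi(r+r\wedge\kappa)+\psi(r-r\wedge\kappa)-2\psi(r)\big] + \Phi_1(r)\psi'(r) \le r\sigma(r)\psi''(r) + \Phi_1(r)\psi'(r).
\end{equation*}
A direct computation of $\psi', \psi''$ from \eqref{lem-test-funct.1} (namely $\psi'(r) = c_1 + e^{-c_2 g(r)}$ and $\psi''(r) = -c_2 g'(r) e^{-c_2 g(r)}$) together with $g' = 1/\sigma + \tfrac{2}{c_2}\Phi_1/(r\sigma)$ should make the right-hand side collapse to $-\tfrac{c_2}{?}$ times something comparable to $\psi(r)$; tracking constants carefully yields exactly the stated $\lambda$.

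The main obstacle I anticipate is the bookkeeping in step~(i): one must show that $r\sigma(r)\psi''(r) + \Phi_1(r)\psi'(r) \le -\lambda\psi(r)$ with the \emph{explicit} $\lambda$ from \eqref{pwcon}, and this requires simultaneously (a) plugging in the exact formula for $g'$ so that the $\Phi_1\psi'$ term cancels against part of $r\sigma\psi''$, leaving $-c_2 e^{-c_2 g(r)}$, and (b) bounding $\psi(r) = c_1 r + \int_0^r e^{-c_2 g(s)}ds \le (c_1 + e^{-c_2 g(0)}) r = (c_1+1)r$ from above while bounding $e^{-c_2 g(r)} \ge e^{-c_2 g(2l_0)}$ from below, so that $c_2 e^{-c_2 g(r)} \ge \lambda(c_1+1) r / r \ge \lambda \psi(r)/r \cdot$(appropriate factor). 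The constant $c_1 = e^{-c_2 g(2l_0)}$ is chosen precisely so that $\psi'$ is continuous at $2l_0$ and so that these two bounds match up, and $c_2 = (2K_2)\wedge g_1(2l_0)^{-1}$ is chosen so that \emph{both} (i) and (ii) go through with the same $\lambda$; getting the minimum and the denominator $1 + e^{c_2 g(2l_0)}$ to appear correctly is the delicate part. Once $\mathbf{C}(\lambda,\kappa,n)$ is established uniformly in large $n$, Theorem~\ref{p-w} gives $W_{\psi}(\delta_x P_t, \delta_y P_t) \le \psi(|x-y|)e^{-\lambda t}$, and finally I convert this to the $W_1$ statement using the two-sided bound $c_1 r \le \psi(r) \le (c_1+1) r$ on $[0,2l_0]$ and the affine (slope-$2c_1$, hence between $c_1 r$ and $(c_1+1)r$ up to the additive constant — more precisely $\psi(r) \ge c_1 r$ everywhere and $\psi(r) \le \frac{1+c_1}{2}\cdot\frac{2c_1}{c_1}\cdot$\,\ldots) behaviour for $r > 2l_0$, yielding the constant $C = (1+c_1)/(2c_1)$.
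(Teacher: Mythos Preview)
Your proposal is correct and follows essentially the same route as the paper: verify that $g$ satisfies the hypotheses of Lemma~\ref{lem-test-funct}, check $\mathbf{C}(\lambda,\kappa,\infty)$ directly for the single function $\psi=\psi_{c_1,c_2}$ (no $n$-dependent truncation is needed since $\psi$ is already globally $C^1$ and affine beyond $2l_0$), and apply Theorem~\ref{p-w}. For the final conversion to $W_1$, the clean global two-sided bound is $2c_1 r\le \psi(r)\le (c_1+1)r$ for all $r\ge 0$, which follows from the concavity of $\psi$ together with $\psi'(0^+)=c_1+1$ and $\psi'(2l_0)=c_1+e^{-c_2 g(2l_0)}=2c_1$; this (rather than the weaker $\psi(r)\ge c_1 r$ you wrote) is what yields $C=(1+c_1)/(2c_1)$.
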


Before going to the proof, we make some comments.

\begin{remark}\label{r:thtpw}\rm
(1) When $l_0=0$, the drift term $b$ satisfies the uniformly dissipative condition, i.e. for any $x,y\in\R^d$,
 $$\langle b(x)-b(y),x-y\rangle\le -K_2|x-y|^2.$$
By using the classical synchronous coupling, one can prove that for any $x,y\in\R^d$ and $t>0$,
  $$  W_1(\delta_xP_t, \delta_yP_t)\le e^{-K_2 t}|x-y|.$$
In this case, the constants $C$ and $\lambda$ given by \eqref{pwcon} are also equal to $1$ and $K_2$, respectively.

(2) Assume that $\Phi_1(s)=K_1 s$ for any $s\in [0, 2l_0]$ with some constant $K_1\ge 0$. Then
  $$g(r)=\int_0^r \frac{ds}{\sigma(s)}+\frac{2}{c_2}\int_0^r \frac{K_1 s}{s\sigma(s)}\, ds =\frac{c_2+2K_1 }{c_2} g_1(r),\quad r\in (0,2l_0],$$
and so
  $$c_2g(2l_0)={(2K_1+c_2)g_1(2l_0)}={\big[2K_1+ (2K_2)\wedge g_1(2l_0)^{-1}\big] g_1(2l_0)}\le 2K_1g_1(2l_0)+1.$$
Therefore, for any fixed $a_0>0$, we have
  $$\lambda \ge  \begin{cases}
  \big(1+{e^{1+2a_0}}\big)^{-1} \big[(2K_2)\wedge g_1(2l_0)^{-1}\big], &  K_1 g_1(2l_0) \le a_0;\\
  (2e)^{-1} \big[(2K_2)\wedge g_1(2l_0)^{-1}\big] \exp\big(-2K_1 g_1(2l_0)\big), &  K_1 g_1(2l_0)\ge a_0.
  \end{cases}$$
According to Example \ref{est-stable}, when $Z$ is the (truncated)
symmetric $\alpha$-stable process with $\alpha\in (0,2)$, the lower
bounds above for $\lambda$ are of optimal orders with respect to
$l_0, K_1$ and $K_2$ when $\alpha\to 2$ (i.e.\ $Z$ is replaced by
the standard Brownian motion).

(3) Suppose that  \eqref{th10} holds with some $\kappa_0>0$ and
  $$\lim_{\kappa\to0} J(\kappa) \kappa^2 =0,$$
which are true for (truncated) $\alpha$-stable processes with $\alpha\in (0,2)$, cf. the proof of Example \ref{exa}. We claim that, if $l_0>0$, then the constant $\lambda$ defined in \eqref{pwcon} tends to $0$ as $\kappa \to0$. Indeed, for $\kappa< r\leq 2l_0$, one has
  $$\sigma(r)\leq \frac1{2r} J(\kappa\wedge r) (\kappa\wedge r)^2 = \frac1{2r} J(\kappa) \kappa^2,$$
hence, as $\kappa\to0$,
  $$g_1(2l_0)=\int_0^{2l_0} \frac{dr}{\sigma(r)}\geq \int_\kappa^{2l_0} \frac{2r}{J(\kappa) \kappa^2}\, dr=\frac{4l_0^2-\kappa^2}{J(\kappa) \kappa^2} \to\infty$$ and
so
$$\lambda\le (2K_2)\wedge g_1(2l_0)^{-1}\to 0.$$
\end{remark}

Next, we are in a position to present the

\begin{proof}[Proof of Theorem $\ref{thtpw}$]
We split the proof into two steps.

\emph{Step $1$.} We first show that the function $g$ defined in the theorem satisfies \eqref{lem-test-funct}. For $r\in (0,2r_0]$, it is clear that
  $$g'(r)=\frac1{\sigma(r)}\bigg[1 + \frac{2\Phi_1(r)}{c_2 r}\bigg]\geq 0.$$
Next, since $\Phi_1$ is concave and $\Phi_1(0)=0$, we have $\Phi_1(r)=\int_0^r \Phi_1'(s)\, ds\geq \Phi_1'(r) r$. This together with $\sigma'\geq 0$ implies
  $$g''(r)=- \frac{\sigma'(r)}{\sigma(r)^2}\bigg[1 + \frac{2\Phi_1(r)}{c_2 r}\bigg] +\frac2{c_2\sigma(r)}\cdot \frac{\Phi_1'(r)r -\Phi_1(r)}{r^2}\leq 0.$$
Finally,
  $$\aligned
  g'''(r)&=\frac{2\sigma'(r)^2-\sigma(r)\sigma''(r)}{\sigma(r)^3} \bigg[1 + \frac{2\Phi_1(r)}{c_2 r}\bigg] -\frac{4\sigma'(r)}{c_2\sigma(r)^2} \cdot \frac{\Phi_1'(r)r -\Phi_1(r)}{r^2}\\
  &\hskip14pt  + \frac{2}{c_2\sigma(r)}\cdot \frac{2\Phi_1(r)-2\Phi_1'(r)r+ \Phi_1''(r) r^2}{r^3}. \endaligned$$
As $\sigma''(r)\leq 0$, the first term on the right hand side is nonnegative. The same is true for the second term since $\sigma'(r)\geq 0$ and $\Phi_1'(r)r -\Phi_1(r)\leq 0$. For the last term, we have by Taylor's formula that there is a constant $\xi\in (0,r)$ such that
  $$\Phi_1(0)=\Phi_1(r)-\Phi_1'(r)r +\frac12 \Phi_1''(\xi) r^2 \leq \Phi_1(r)-\Phi_1'(r)r +\frac12 \Phi_1''(r) r^2,$$
where the last inequality is due to the fact that $\Phi_1''$ is nondecreasing. Note that $\Phi_1(0)=0$, we conclude that the third term is also nonnegative. Therefore $g'''(r)\geq 0$.

\emph{Step $2$.} Let $\psi$ be defined by \eqref{lem-test-funct.1} with $c_1,c_2$ and $g$ given in the theorem. We prove that $\psi$ satisfies ${\mathbf C(\lambda, \kappa, \infty)}$ for some $\lambda>0$ and $\kappa\in(0,\kappa_0]$ (see Theorem \ref{p-w} for its meaning). Note that, by (3) in Lemma \ref{lem-test-funct}, $\psi$ verifies \eqref{p-w-1} for all $r\geq s\ge 0$. Under the condition ${\mathbf B(\Phi_1(r), K_2 r, l_0)}$, (4) in Lemma \ref{lem-test-funct} and \eqref{thtpw.1} yield that for all $r\in (0,l_0]$,
  \begin{eqnarray*}
  \Theta(r)&:=& \frac12 J(\kappa\wedge r) \big[\psi(r+\kappa\wedge r)+\psi(r-\kappa\wedge r)-2\psi(r) \big] + \Phi_1(r)\psi'(r)\\
  &\leq& \frac12 J(\kappa\wedge r) (\kappa\wedge r)^2 \psi''(r)+\Phi_1(r)\psi'(r)\leq  \sigma(r) r\psi''(r)+\Phi_1(r)\psi'(r).
  \end{eqnarray*}
By \eqref{lem-test-funct.1}, we have $\psi'(r)= c_1+ e^{-c_2 g(r)}$ and $\psi''(r)= -c_2g'(r) e^{-c_2 g(r)}$. Hence, by the definition of $g$, we get that
  \begin{equation}\label{rr1}\begin{split}
  \Theta(r)&\leq \sigma(r) r\big[-c_2g'(r) e^{-c_2 g(r)}\big]+ \Phi_1(r)\big[c_1+ e^{-c_2g(r)}\big]\\
  &\leq -c_2 r e^{-c_2 g(r)}\bigg[1+\frac{2\Phi_1(r)}{c_2 r}\bigg] + 2\Phi_1(r) e^{-c_2 g(r)}\\
  &=  -c_2r e^{-c_2 g(r)} \leq -c_1 c_2 r\leq -\frac{ c_1 c_2}{c_1+1} \psi(r),
  \end{split}\end{equation}
where the last inequality follows from (1) in Lemma \ref{lem-test-funct}.

Next, if $r\in (l_0, 2l_0]$, by ${\mathbf B(\Phi_1(r), K_2 r, l_0)}$ and (1) in Lemma \ref{lem-test-funct} again,
  \begin{equation}\label{rr2}\begin{split}
  -K_2 r\psi'(r) &= -K_2 r \big[c_1+ e^{-c_2 g(r)}\big] \leq -\frac{K_2 [c_1+ e^{-c_2g(2l_0)}]}{c_1+1} \psi(r)\\
  &= -\frac{2K_2 c_1}{c_1+ 1} \psi(r)\leq -\frac{ c_1 c_2}{c_1+ 1} \psi(r).
  \end{split}\end{equation}
Note that the function
  $$r\mapsto \frac{\psi'(2l_0)\,r}{\psi(r)}=\frac{2c_1 r}{2c_1 r+\int_0^{2l_0} e^{-c_2 g(s)}\,d s- 2c_1 l_0}$$
is increasing on $(2l_0,\infty)$, since $\int_0^{2l_0} e^{-c_2 g(s)}\,d s\ge 2l_0 e^{-c_2 g(2l_0)}= 2 c_1 l_0.$ Thus for $r>2l_0$, we use again ${\mathbf B(\Phi_1(r), K_2 r, l_0)}$ to obtain
  \begin{equation}\label{rr3}
  \begin{split}
  -K_2 r\psi'(r) & =-K_2\psi'(2l_0)\,r\le  -K_2\frac{2l_0\psi'(2l_0)}{\psi(2l_0)}\psi(r)\\
  &\le -2 K_2\frac{2c_1 l_0}{2l_0(c_1+1)} \psi(r)\leq -\frac{c_1 c_2 }{c_1+1} \psi(r).
 \end{split}\end{equation}

We conclude from all the estimates above that ${\mathbf C(\lambda, \kappa, \infty)}$ holds with the positive constant $\lambda$ given by \eqref{pwcon}. Therefore, we can apply Theorem \ref{p-w} to get that for any $t>0$ and $x,y\in\R^d$,
  $$W_\psi(\delta_xP_t, \delta_y P_t)\leq \psi(|x-y|)e^{-\lambda t}. $$
Since $\psi$ is concave on $[0,\infty)$, it is clear that $(c_1+1)r\geq \psi(r)\geq \psi'(2l_0)r=2c_1 r$ for all $r\geq 0$. Hence the desired result holds with $C=(c_1+1)/(2c_1)$.
\end{proof}

Similar to Theorem \ref{thtpw}, we have the following statement about the exponential rates for total variation.

\begin{theorem}\label{thtvart}
Assume that the drift $b$ satisfies ${\mathbf B(K_1, K_2 r, l_0)}$ for some $K_1, l_0\ge 0$ and $K_2>0$, and that \eqref{th10} holds for the L\'{e}vy measure $\nu$ with some $\kappa_0>0$. Moreover, suppose that there is a nondecreasing and concave function $\sigma\in C([0,2l_0])\cap C^2((0,2l_0])$ such that for some $\kappa\in (0,\kappa_0\wedge l_0]$, one has
  $$ \sigma(r)\leq \frac1{2r} J(\kappa\wedge r) (\kappa\wedge r)^2, \quad r\in (0, 2l_0]; $$
and the function $g(r)=\int_{0}^r \frac{ds}{\sigma(s)}$ is well defined for all $r\in [0,2l_0]$. Then there exist constants $\lambda, c>0$ such that for any $x,y\in\R^d$ and $t>0$,
  $$ W_1(\delta_xP_t, \delta_yP_t)+ \|\delta_xP_t- \delta_yP_t\|_{{\rm Var}}\le c e^{-\lambda t}(1+|x-y|).$$
\end{theorem}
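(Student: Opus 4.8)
The strategy is to follow the pattern of the proof of Theorem~\ref{thtpw}: produce a family $(\psi_n)_{n\ge1}$ of admissible test functions, apply Theorem~\ref{p-w}, and then convert the resulting bound on $W_{\psi_\infty}$ into the total variation estimate. The new point is that, to capture the total variation, $\psi_\infty$ must be bounded below by a positive constant on $(0,\infty)$ while still satisfying $\psi_\infty(0)=0$; a continuous function cannot do this, but $\psi_\infty=\liminf_n\psi_n$ is allowed to jump at $0$. So I would arrange that on $[1/n,\infty)$ all the $\psi_n$ coincide with a single fixed concave increasing $\eta$ with $\eta(0^+)=:c_0>0$ and $\eta$ affine on $[2l_0,\infty)$, whereas on the short interval $[0,1/n]$ the function $\psi_n$ rises concavely and steeply from $(0,0)$ to $(1/n,\eta(1/n))$ with matching slope $\eta'(1/n)$. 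Then $\psi_n\in C^1([0,\infty))$ is increasing with $\psi_n(0)=0$ and is concave (so \eqref{p-w-1} holds), and $\psi_\infty(r)=\eta(r)$ for $r>0$ while $\psi_\infty(0)=0$; in particular $c_0\,\I_{(0,\infty)}(r)\le\psi_\infty(r)\le C_1(1+r)$ for some constant $C_1$. Granting condition ${\mathbf C(\lambda,\kappa,n)}$ for all large $n$, Theorem~\ref{p-w} gives $W_{\psi_\infty}(\delta_xP_t,\delta_yP_t)\le\psi_\infty(|x-y|)e^{-\lambda t}$; since $\psi_\infty\ge c_0\,\I_{(0,\infty)}$ one gets $\tfrac{c_0}{2}\|\delta_xP_t-\delta_yP_t\|_{\rm Var}=c_0\,W_{\I_{(0,\infty)}}(\delta_xP_t,\delta_yP_t)\le W_{\psi_\infty}(\delta_xP_t,\delta_yP_t)$, and the bound $\psi_\infty(r)\le C_1(1+r)$ yields the claim with $c=2C_1/c_0$.

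For the fixed function I would take, on $[0,2l_0]$,
$$\eta(r)=c_0+\int_0^r e^{-c_2 g(s)}\,ds,\qquad g(r)=\int_0^r\frac{ds}{\sigma(s)},$$
and extend it affinely for $r>2l_0$. This mirrors Lemma~\ref{lem-test-funct} with an added constant $c_0$, and — crucially — uses only $g$, not the divergent integral $\int \Phi_1(s)/(s\sigma(s))\,ds$ that would enter via $\Phi_1\equiv K_1$ if one copied Theorem~\ref{thtpw} verbatim. The hypotheses on $\sigma$ give, exactly as in Step~1 of the proof of Theorem~\ref{thtpw}, that $\eta'>0$, $\eta''\le0$ and $\eta'''\ge0$ on $(0,2l_0]$, so that \eqref{p-w-1} and the second-difference bound of Lemma~\ref{lem-test-funct}(4) are available. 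Verifying ${\mathbf C(\lambda,\kappa,n)}$ then splits into three ranges: (ii) on $[l_0,n]$ reads $K_2r\psi_n'(r)\ge\lambda\psi_n(r)$, which holds for $\lambda$ small since $\psi_n$ is concave with bounded slope; for (i) with $r\in[1/n,\kappa)$ one uses $\psi_n(r-r\wedge\kappa)=\psi_n(0)=0$, so the second difference equals $\eta(2r)-2\eta(r)\le-c_0$ and $\tfrac12 J(\kappa\wedge r)[\,\cdots\,]\le-\tfrac{c_0}{2}\inf_{(0,\kappa_0]}J$ dominates $K_1\psi_n'(r)+\lambda\psi_n(r)$ once $c_0$ is large and $\lambda$ small; and for (i) with $r\in[\kappa,l_0)$ one uses the concavity of $\eta$, the bound $\tfrac12 J(\kappa\wedge r)(\kappa\wedge r)^2\ge r\sigma(r)$ from the hypothesis on $\sigma$, and Lemma~\ref{lem-test-funct}(4) to reduce (i) to $r\sigma(r)\eta''(r)+K_1\eta'(r)+\lambda\eta(r)\le0$; since the left-hand side equals $e^{-c_2 g(r)}(K_1-c_2 r)+\lambda\eta(r)$, this holds for $r\ge\kappa$ once $c_2>K_1/\kappa$ and $\lambda$ is small. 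The parameters are chosen in the order: $\kappa\in(0,\kappa_0\wedge l_0]$, then $c_2>K_1/\kappa$, then $c_0$ large, then $\lambda$ small.

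The point that needs the most care, I expect, is the behaviour of $\psi_n$ on the cutoff interval $[0,1/n]$ and its effect on condition~(i) in the transition zone $r\in[\kappa,\kappa+1/n)$, where the second difference $\psi_n(r+\kappa)+\psi_n(r-\kappa)-2\psi_n(r)$ involves the non-smooth piece of $\psi_n$ on $[0,1/n]$ rather than $\eta$. One has to check that this costs only an error $O(1/n)$ — concretely $\psi_n(s)\le\eta(s)+1/n$ on $[0,1/n]$, which holds because $\eta$ varies by at most $1/n$ there and $\psi_n$ is increasing — so that the strictly negative main term still survives for $n$ large; after that one passes to the limit $n\to\infty$ using Fatou's lemma exactly as in the proof of Theorem~\ref{p-w}. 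The remaining verifications of ${\mathbf C(\lambda,\kappa,n)}$ (in particular at $r$ near $l_0$, and for the affine piece beyond $2l_0$) and the bookkeeping of constants are routine, paralleling Theorem~\ref{thtpw}.
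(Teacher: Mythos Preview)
Your proposal is correct and follows essentially the same route as the paper's proof: build $\psi_n$ by adding a positive constant (your $c_0$, the paper's $a$) to a concave function of the Lemma~\ref{lem-test-funct} type on $[1/n,\infty)$, bridge to $0$ on $[0,1/n]$, verify ${\mathbf C(\lambda,\kappa,n)}$ by splitting into the three ranges $[1/n,\kappa]$, $[\kappa,l_0)$, $[l_0,\infty)$, and read off the total variation bound from $W_{\psi_\infty}$ via $\psi_\infty\ge c_0\I_{(0,\infty)}$.

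The only noteworthy difference is cosmetic. You take $\eta(r)=c_0+\int_0^r e^{-c_2g(s)}\,ds$, dropping the linear piece $c_1r$ that the paper retains from Lemma~\ref{lem-test-funct}; the paper keeps it only so as to reuse the calculations \eqref{rr1}--\eqref{rr3} verbatim and to extract the explicit rate \eqref{convar}, neither of which the theorem statement demands. Your worry about the transition zone $r\in[\kappa,\kappa+1/n)$ is slightly overcautious: since $\psi_n\le\eta$ on $[0,1/n]$ (indeed $\psi_n(s)\le\psi_n(1/n)=\eta(1/n)\le\eta(s)+1/n$, but more simply $\psi_n(s)\le\eta(1/n)$ while $\eta(s)\ge c_0$), the second difference there is \emph{at most} that of $\eta$, so Lemma~\ref{lem-test-funct}(4) applies without any loss. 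The paper handles this via the same one-line observation \eqref{p-var1}.
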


\begin{proof}\emph{Step $1$.} Let $\psi$ be the function defined by \eqref{lem-test-funct.1}. For any $n \ge1$, define $\psi_{n}\in C^2([0,\infty))$ such that $\psi_{n}$ is strictly increasing and
   $$\psi_{n}(r)\begin{cases}
      = \psi(r),        &  0\le r\le 1/(n+1);\\
      \le a+\psi(r),    & 1/(n+1)< r\le 1/n;\\
      = a+\psi(r),      &1/n\le r< \infty,
    \end{cases}$$
where $a>0$ and the constants $c_1, c_2$ in the definition of $\psi$ are determined later. For any $n\ge1$ and every $r\in [1/n,\infty)$, we have $\psi_{n}(r)=a+\psi(r)$ and $\psi'_{n}(r)=\psi'(r)$. Therefore, for any $\kappa\in (0,\kappa_0]$,
  \begin{equation}\label{p-var1}
  \psi_n(r-\kappa\wedge r) =\psi_n(r-\kappa \wedge r)\I_{\{r>\kappa\}}\leq \big[a+\psi(r-\kappa \wedge r)\big]\I_{\{r>\kappa\}}.
  \end{equation}
This along with (3) in Lemma \ref{lem-test-funct} implies that $\psi_n$ fulfills \eqref{p-w-1}.

Below we prove that by proper choices of $c_1$, $c_2$ and $a>0$, for $n\ge l_0^{-1}\vee l_0$ large enough, $\psi_{n}$ satisfies ${\mathbf C(\lambda, \kappa, n)}$ with some constants $\lambda>0$ and $\kappa\in (0,\kappa_0]$  (indeed for all $r\in [1/n,\infty)$). Once this is done, then, by Theorem \ref{p-w} and the fact that
  $$\lim_{n\to \infty} \psi_n=a\I_{(0,\infty)}+\psi,$$
we have for any $x,y\in\R^d$ with $x\neq y$,
  $$W_{a\I_{(0,\infty)}+\psi}(\delta_xP_t, \delta_y P_t)\le e^{-\lambda t}(a+\psi(|x-y|)). $$
This implies that \begin{align*}W_1(\delta_xP_t, \delta_yP_t)+\|
\delta_xP_t- \delta_y P_t\|_{\var}&\le \Big(\frac{1}{2c_1} \vee
\frac{2}{a}\Big)
W_{a\I_{(0,\infty)}+\psi}(\delta_xP_t, \delta_y P_t)\\
&\le \Big(\frac{1}{2c_1} \vee \frac{2}{a}\Big) e^{-\lambda
t}\Big(a+\psi(|x-y|)\Big)\\
&\le \Big(\frac{1}{2c_1} \vee \frac{2}{a}\Big) \Big((1+c_1)\vee
a\Big) e^{-\lambda t}\big(1+|x-y|\big),\end{align*} which proves the
desired assertion.

\emph{Step $2$.} In the proof below we also aim to give an explicit expression for the exponential rate $\lambda$ in the theorem. First, by \eqref{th10}, for any $0<\kappa\le \kappa_0$,
  $$J_{\kappa}:=\inf_{0<s\le \kappa} J(s)>0.$$
Note that the drift term $b$ satisfies ${\mathbf B(K_1,K_2r, l_0)}$ for some $K_1, l_0\ge0$ and $K_2>0$. According to \eqref{p-var1}, for $r\in (1/n,l_0]$, we have
  \begin{equation}\label{proof-var-1}
  \begin{split}
  \Theta_n(r) &:= \frac{1}{2}J(\kappa \wedge r) \big[\psi_n(r+r\wedge \kappa)+\psi_n(r-r\wedge \kappa)-2\psi_n(r) \big] +K_1\psi_n'(r)\\
  &\, \le \frac{1}{2}J(\kappa \wedge r) \big[\psi(r+r\wedge \kappa)+\psi(r-r\wedge \kappa)-2\psi(r) \big] +K_1\psi'(r)\\
  &\, \quad- \frac{a}{2}J(\kappa \wedge r)\I_{\{r\le \kappa \wedge l_0\}}.
  \end{split}
  \end{equation}

In the following, let $\kappa\in (0,\kappa_0\wedge l_0]$ be the constant in assumptions of the theorem. By \eqref{proof-var-1} and (4) in Lemma \ref{lem-test-funct}, we find that for all $r\in(\kappa, l_0]$,
  \begin{align*}
  \Theta_n(r) &\le \frac{1}{2} J(\kappa)\kappa^2\psi''(r)+K_1\psi'(r)\le \frac{1}{2} J(\kappa)\kappa^2\psi''(r)+\frac{K_1}{\kappa}r\psi'(r).
  \end{align*}
Taking $c_1= e^{-c_2g(2l_0)}$ and $c_2=2K_1/\kappa+ \big[(2K_2)\wedge g(2l_0)^{-1}\big] $, and following the argument of \eqref{rr1},
we obtain that for all $r\in(\kappa, l_0]$,
\begin{equation}\label{sim-prof1}
  \Theta_n(r) \le -\frac{ c_1}{c_1+ 1} \big[(2K_2)\wedge g(2l_0)^{-1}\big]\psi(r).
\end{equation}
On the other hand, we can deduce from \eqref{proof-var-1} and (3) in Lemma \ref{lem-test-funct} that for all $r\in[1/n,\kappa]$,
  \begin{align*}
  \Theta_n(r) &\le K_1(c_1+ e^{-c_2g(r)})-\frac{a}{2}J_{\kappa} \le K_1(c_1+1)-\frac{a}{2}J_{\kappa}.
  \end{align*}
Then, choosing
  $$a=\frac{2}{J_{\kappa}}\left(K_1(c_1+ 1)+\frac{ c_1}{c_1+ 1} \big[(2K_2)\wedge g(2l_0)^{-1}\big]\psi(\kappa)\right),$$
we find that for all $r\in[1/n,\kappa]$,
  \begin{equation}\label{sim-prof2}
  \Theta_n(r) \le - \frac{c_1}{c_1+ 1} \big[(2K_2)\wedge g(2l_0)^{-1}\big] \psi(\kappa).
  \end{equation}
Furthermore, using ${\mathbf B(K_1,K_2r, l_0)}$ and following the arguments of \eqref{rr2} and \eqref{rr3}, it is easy to see that for all $r\ge l_0$,
  $$ -K_2r\psi_n'(r)=-K_2r\psi'(r) \le -\frac{2K_2 c_1}{c_1+ 1} \psi(r).$$

Combining all the estimates above, we can see that $\psi_{n}$ satisfies ${\mathbf C(\lambda, \kappa, n)}$ with
  \begin{equation}\label{convar}\begin{split}
  \lambda&=\frac{c_1}{c_1+ 1}\big[(2K_2)\wedge g(2l_0)^{-1}\big] \inf_{r>0} \frac{\psi(r\vee \kappa )}{a+\psi(r)}\\
  &=\frac{c_1}{c_1+ 1} \big[(2K_2)\wedge g(2l_0)^{-1}\big] \left(1+\frac{a}{\psi(\kappa)}\right)^{-1}>0.
  \end{split}\end{equation}
Then, the proof is complete.
\end{proof}

\begin{remark} Suppose that $$\lim_{\kappa\to0} \inf_{0<s\le \kappa} J(s)=\infty,$$ which holds true under \eqref{th13}.
If the drift term $b$ satisfies ${\mathbf B(K_1,K_2r, l_0)}$ with $K_1=0$, then for any $x,y\in\R^d$,
  $$\langle b(x)-b(y),x-y\rangle\le 0.$$
In this case, the exponential rate $\lambda$ given by \eqref{convar} is reduced into
  \begin{align*}\lambda= &\frac{(2K_2)\wedge{g(2l_0)}^{-1}}{1+ \exp\big\{g(2l_0)\big[(2K_2)\wedge {g(2l_0)}^{-1}\big]\big\}}\Bigg(1+\frac{2}{J_\kappa}\cdot\frac{(2K_2)\wedge {g(2l_0)}^{-1}}{1+ \exp\big\{g(2l_0)\big[(2K_2)\wedge {g(2l_0)}^{-1}\big]\big\}}\Bigg)^{-1}.
  \end{align*}
Note that, as $\kappa \to0$,
  $$\frac{2}{J_\kappa}\cdot\frac{(2K_2)\wedge {g(2l_0)}^{-1}}{1+ \exp\big\{g(2l_0)\big[(2K_2)\wedge {g(2l_0)}^{-1}\big]\big\}}\le \frac{4K_2}{J_\kappa} \to 0,$$
thanks to  $\lim_{\kappa\to0}J_\kappa=\infty$. Therefore, the quantity in the big round brackets tends to 1 as $\kappa \to0$, which implies that the exponential rate with respect to the total variation can be arbitrarily close to the one with respect to the $L^1$-Wasserstein distance (by choosing $\kappa$ small enough), provided that the condition ${\mathbf B(K_1,K_2r, l_0)}$ holds with $K_1=0$.
\end{remark}

We can now present the

\begin{proof}[Proof of  Theorem $\ref{th1}$]
Our strategy is to deduce the assertions (a) and (b) from Theorems \ref{thtpw} and \ref{thtvart}, respectively. It is obvious that $\Phi_1(r)=K_1 r^\beta$ with $\beta\in(0,1]$ satisfies (b) in Theorem \ref{thtpw}. Hence, it suffices to show that, under the condition \eqref{th13}, there exists a function $\sigma\in C([0,2l_0])\cap C^2((0,2l_0])$ satisfying the conditions in Theorems \ref{thtpw} and \ref{thtvart}.

Let $b_0=2l_0 e^{(1+\theta)/(1-\alpha)}$. It is easy to see that the function $s\mapsto s^{1-\alpha} \left(\log \frac{b_0}{s} \right)^{1+\theta}$ is concave and increasing on the interval $[0,2l_0]$. Under the condition \eqref{th13}, there exist constants $\kappa\in (0,\kappa_0\wedge l_0 \wedge1]$ and $b_1>0$ such that for all $s\in (0,\kappa]$, it holds
  $$J(s)\geq b_1 s^{-\alpha}\left(\log\frac{1}{s}\right)^{1+\theta}.$$
By taking a smaller $b_1$ we also have
  $$J(s)\geq b_1 s^{-\alpha} \left(\log\frac{b_0}{s}\right)^{1+\theta}, \quad s\in (0,\kappa],$$
which is equivalent to
  $$\frac12 s J(s)\geq \frac{b_1}2 s^{1-\alpha}\left(\log\frac{b_0}{s}\right)^{1+\theta},\quad s\in (0,\kappa].$$
For $s\in (\kappa,2l_0]$, we have
  $$\frac1{2s} J(\kappa) \kappa^2\geq \frac1{4l_0}J(\kappa) \kappa^2>0.$$
From the above two inequalities, we deduce that there is a small enough constant $b_2\in (0,b_1/2)$ such that
  $$b_2 s^{1-\alpha}\left(\log\frac{b_0}{s}\right)^{1+\theta}\leq \frac1{2s} J(\kappa\wedge s) (\kappa\wedge s)^2 \quad \mbox{for all } s\in (0,2l_0].$$
That is, \eqref{thtpw.1} holds with $\sigma(s)=b_2 s^{1-\alpha}\left(\log\frac{b_0}{s}\right)^{1+\theta}$. It is clear that the integrals $\int_0^r \frac{1}{\sigma(s)}\, ds$ and $\int_0^r \frac{K_1 s^\beta}{s\sigma(s)}\, ds$ are well defined since $\alpha\geq 0$ and $\alpha+\beta\geq 1$. Therefore the function $\sigma$ satisfies all the requirements in Theorem \ref{thtpw}. Note that $\int_0^r \frac{1}{\sigma(s)}\, ds$ still makes sense when $\alpha=0$, thus it fulfills also the conditions in Theorem \ref{thtvart}.
\end{proof}

To conclude this subsection, we present the proofs of Examples
\ref{exa} and \ref{est-stable}.

\begin{proof}[Proof of Example $\ref{exa}$]
Denote by $q(z)=\I_{\{0<z_1\leq 1\}} \frac{c_{d,\alpha}}{|z|^{d+\alpha}}$ for any $z\in\R^d$. Then
  $$q(z)\wedge q(x+z)= \bigg(\I_{\{0<z_1\leq 1\}} \frac{c_{d,\alpha}}{|z|^{d+\alpha}}\bigg) \wedge \bigg(\I_{\{0<x_1+z_1\leq 1\}}  \frac{c_{d,\alpha}}{|x+z|^{d+\alpha}}\bigg).$$
We assume $|x|\leq 1/4$. If $x_1\geq 0$, then
  $$\aligned q(z)\wedge q(x+z)&\geq \I_{\{0<z_1\leq 1-x_1\}} \frac{c_{d,\alpha}}{(|x|+|z|)^{d+\alpha}}\geq \I_{\{0<z_1\leq 1-x_1\}\cap \{|x|\leq |z|\}} \frac{c_{d,\alpha}}{(2|z|)^{d+\alpha}}\\
  &\geq \I_{\{z_1>0\}\cap \{|x|\leq |z|\leq 1-|x|\}} \frac{c_{d,\alpha}}{(2|z|)^{d+\alpha}}.
  \endaligned $$
Therefore, denoting by $S^{d-1}_+ =\{\theta\in \R^d: |\theta|=1 \mbox{ and } \theta_1>0\}$ the half sphere and $\sigma( d\theta)$ the spherical measure, we have
  \begin{align*}
  \int_{\R^d} q(z)\wedge q(x+z)\, dz &\geq \frac{c_{d,\alpha}}{2^{d+\alpha}} \int_{\{z_1>0\}\cap \{|x|\leq |z|\leq 1-|x|\}} \frac{1}{|z|^{d+\alpha}}\, dz\\
  &= \frac{c_{d,\alpha}}{2^{d+\alpha}} \int_{|x|}^{1-|x|} r^{d-1} dr \int_{S^{d-1}_+}\frac{\sigma(d\theta)}{|r\theta|^{d+\alpha}}= \frac{c_{d,\alpha}\, \omega_d}{2^{d+1+\alpha}\, \alpha}\bigg( \frac1{|x|^\alpha}- \frac1{(1-|x|)^\alpha}\bigg),
  \end{align*}
where $\omega_d=\sigma(S^{d-1})$ is the area of the sphere. Since  $|x|\leq 1/4$, it is clear that
  \begin{equation}\label{proof-exa-1}
  \int_{\R^d} q(z)\wedge q(x+z)\, dz\geq \frac{c_{d,\alpha}\, \omega_d}{2^{d+1+\alpha}\, \alpha}\Big(1-\frac1{3^\alpha}\Big)\frac1{|x|^\alpha}.
  \end{equation}

If $x_1<0$, then
  \begin{align*}
  q(z)\wedge q(x+z)&\geq \I_{\{-x_1< z_1\leq 1\}} \frac{c_{d,\alpha}}{(|x+z|+|x|)^{d+\alpha}}\geq \I_{\{-x_1< z_1\leq 1\}\cap \{|x|\leq |x+z|\}} \frac{c_{d,\alpha}}{(2|x+z|)^{d+\alpha}}\\
  &\geq \I_{\{z_1+ x_1>0\}\cap \{|x|\leq |x+z|\leq 1-|x|\}} \frac{c_{d,\alpha}}{(2|x+z|)^{d+\alpha}}.
  \end{align*}
Hence, similar to the argument for the case that $x_1\ge0$, we have
  \begin{align*}
  \int_{\R^d} q(z)\wedge q(x+z)\, dz &\geq \frac{c_{d,\alpha}}{2^{d+\alpha}} \int_{\{z_1+ x_1>0\}\cap \{|x|\leq |x+z|\leq 1-|x|\}} \frac{1}{|x+z|^{d+\alpha}}\, dz\\
  &= \frac{c_{d,\alpha}}{2^{d+\alpha}} \int_{\{z_1 >0\}\cap \{|x|\leq |z|\leq 1-|x|\}} \frac{1}{|z|^{d+\alpha}}\, dz= \frac{c_{d,\alpha}\, \omega_d}{2^{d+1+\alpha}\, \alpha}\Big(1-\frac1{3^\alpha}\Big)\frac1{|x|^\alpha}.
  \end{align*}
Combining this with \eqref{proof-exa-1}, we get that for all $0<s\le 1/4$,
$$J(s)\ge \inf_{|x|=s}  \int_{\R^d} q(z)\wedge q(x+z)\, dz\ge \frac{c_{d,\alpha}\, \omega_d}{2^{d+1+\alpha}\, \alpha}\Big(1-\frac1{3^\alpha}\Big) s^{-\alpha},$$ which finishes the proof.
\end{proof}

\begin{proof}[Proof of Example $\ref{est-stable}$]
According to the proof of Example $\ref{exa}$, we can take $\sigma(r)=a_1r^{1-\alpha}$ and so $g_1(r)=a_2 r^\alpha$ for some $a_1,a_2>0$ in Theorem \ref{thtpw}. Therefore, the required estimates follow from Remark \ref{r:thtpw}(2).
\end{proof}

\subsection{Proofs of results related to strong ergodicity}\label{section4}
Similar to Theorem \ref{th1}(b), Theorem \ref{thm-strong-ergo} is a consequence of the following result.
\begin{theorem}\label{thtvarst}
Assume that the drift $b$ satisfies ${\mathbf B(K_1, \Phi_2(r), l_0)}$ for some $K_1, l_0\ge 0$ and some positive measurable function $\Phi_2$ such that $\Phi_2(r)$ is bounded from below for $r$ large enough and satisfies \eqref{erg-2}, and that \eqref{th10} holds for the L\'{e}vy measure $\nu$ with some $\kappa_0>0$.
Moreover, suppose that there is a nondecreasing and concave function $\sigma\in C([0,2l_0])\cap C^2((0,2l_0])$ such that for some $\kappa\in (0,\kappa_0\wedge l_0]$, one has
  $$
  \sigma(r)\leq \frac1{2r} J(\kappa\wedge r) (\kappa\wedge r)^2, \quad r\in (0, 2l_0];
  $$
and the function $g(r)=\int_{0}^r \frac{ds}{\sigma(s)}$ is well defined for all $r\in [0,2l_0]$. Then there exist constants $\lambda, c>0$ such that for any $x,y\in\R^d$ and $t>0$,
  $$  \|\delta_xP_t- \delta_yP_t\|_{{\rm Var}}\le c e^{-\lambda t}.$$
\end{theorem}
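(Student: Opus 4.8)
The plan is to follow the blueprint of Theorem~\ref{thtvart}, replacing the linear-growth test function outside $[0,2l_0]$ by one that grows fast enough to absorb the superlinear dissipativity encoded in \eqref{erg-2}. Concretely, I would keep the inner part of the test function as $\psi(r)=c_1 r+\int_0^r e^{-c_2 g(s)}\,ds$ on $[0,2l_0]$ exactly as in Lemma~\ref{lem-test-funct}, so that properties (1)--(4) of that lemma are available on $(0,2l_0]$, and in particular \eqref{p-w-1} and the one-sided estimate $\psi(r+\delta)+\psi(r-\delta)-2\psi(r)\le\psi''(r)\delta^2$ for $r\le l_0$ still hold. On the region $[2l_0,\infty)$ I would \emph{not} extend $\psi$ linearly; instead I would define, for $r\ge 2l_0$,
\[
  \psi(r)=\psi(2l_0)+\psi'(2l_0)\int_{2l_0}^r \exp\!\Big(-\int_{2l_0}^u \frac{\lambda_0}{\Phi_2(s)}\,ds\Big)\,du
\]
for a constant $\lambda_0>0$ to be fixed, where $\psi'(2l_0)=c_1+e^{-c_2 g(2l_0)}$ matches the left derivative so that $\psi\in C^1([0,\infty))$. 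Because $\Phi_2$ is bounded below for large $r$ and \eqref{erg-2} holds, $\int_{2l_0}^\infty \Phi_2(s)^{-1}\,ds<\infty$, so the inner exponential integral converges to a finite limit and hence $\psi$ is \emph{bounded} on $[0,\infty)$; this boundedness is precisely what upgrades \eqref{thsrvar-1} to \eqref{thm-strong-ergo-1}. One checks $\psi$ is concave, increasing, with $\psi''\le 0$ on $(2l_0,\infty)$, so \eqref{p-w-1} continues to hold for all large $r$.

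Next I would run the two-region verification of condition ${\mathbf C(\lambda,\kappa,n)}$ from Theorem~\ref{p-w}, using the approximating functions $\psi_n$ obtained by adding a jump of height $a>0$ across $[1/(n+1),1/n]$ exactly as in the proof of Theorem~\ref{thtvart}. On $[1/n,l_0]$ the computation is \emph{identical} to \eqref{proof-var-1}--\eqref{sim-prof2}: with $c_2=2K_1/\kappa+\big[(2K_2)\wedge g(2l_0)^{-1}\big]$ (here one may harmlessly take any fixed $K_2>0$, or simply use that $\Phi_2$ is bounded below on $[l_0,2l_0]$ to get a dissipativity constant there) and $a$ chosen as in that proof, one gets $\Theta_n(r)\le -\lambda_1\psi(r)$ for $r\in[1/n,l_0]$ with an explicit $\lambda_1>0$. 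The new point is the region $r>l_0$: there \eqref{proofth2544} gives $\widetilde L\psi_n(r)\le -\Phi_2(r)\psi_n'(r)=-\Phi_2(r)\psi'(r)$. For $r\in[l_0,2l_0]$ one bounds $\Phi_2(r)\psi'(r)\ge (\inf_{[l_0,2l_0]}\Phi_2)\psi'(2l_0)\ge \mathrm{const}\cdot\psi(r)$ since $\psi$ is bounded there. For $r>2l_0$, by the very definition of the extension, $\Phi_2(r)\psi'(r)=\Phi_2(r)\psi'(2l_0)\exp(-\int_{2l_0}^r\lambda_0\Phi_2^{-1})\,$; comparing with $\psi(r)-\psi(2l_0)=\psi'(2l_0)\int_{2l_0}^r\exp(-\int_{2l_0}^u\lambda_0\Phi_2^{-1})du$ and integrating by parts (or just using $\psi(r)\le \psi(\infty)<\infty$) one sees $\Phi_2(r)\psi'(r)\ge\lambda_0(\psi(r)-\psi(2l_0))$, and since $\psi(2l_0)$ is a fixed fraction of $\sup\psi$ one can absorb the additive constant, getting $\Phi_2(r)\psi'(r)\ge\lambda_2\psi(r)$ for a suitable $\lambda_2>0$ possibly after enlarging $\lambda_0$ or choosing $c_1$ appropriately. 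Taking $\lambda=\min\{\lambda_1,\lambda_2\}\wedge(\text{the }[l_0,2l_0]\text{ rate})$, condition ${\mathbf C(\lambda,\kappa,n)}$ holds for all $n$ large.

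Then Theorem~\ref{p-w} yields $W_{a\I_{(0,\infty)}+\psi}(\delta_xP_t,\delta_yP_t)\le e^{-\lambda t}\big(a+\psi(|x-y|)\big)$; bounding $\psi(|x-y|)\le\|\psi\|_\infty<\infty$ uniformly in $x,y$, and using $\|\delta_xP_t-\delta_yP_t\|_{\mathrm{Var}}\le 2a^{-1}W_{a\I_{(0,\infty)}+\psi}(\delta_xP_t,\delta_yP_t)$, gives $\|\delta_xP_t-\delta_yP_t\|_{\mathrm{Var}}\le 2a^{-1}(a+\|\psi\|_\infty)e^{-\lambda t}=:ce^{-\lambda t}$, which is exactly the claim. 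I expect the main obstacle to be the region $r>l_0$: one must verify that the chosen superlinear extension is simultaneously (i) $C^1$-matched at $2l_0$, (ii) bounded (this forces the shape $\exp(-\int\lambda_0/\Phi_2)$ and uses \eqref{erg-2} crucially), (iii) concave so that \eqref{p-w-1} survives, and (iv) satisfies the Lyapunov inequality $\Phi_2(r)\psi'(r)\ge\lambda\psi(r)$ with the \emph{additive} constant $\psi(2l_0)$ correctly absorbed — the last point is where the lower bound on $\Phi_2$ for large $r$ is needed, since without it $\Phi_2(r)\psi'(r)$ could decay faster than $\psi(r)-\psi(2l_0)$ near infinity. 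A secondary bookkeeping point is that here $\Phi_2$ is only measurable on $[0,l_0]$ so one cannot invoke Lemma~\ref{lem-test-funct}'s smoothness requirements there; but since the inner test function is built from $g=g_1$ (not involving $\Phi_2$) this causes no difficulty, and the one-sided Lipschitz bound \eqref{th111} with $\Phi_1\equiv K_1$ on $[0,l_0]$ is all that is used.
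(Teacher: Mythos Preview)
Your proposal is correct and follows the same blueprint as the paper: use Theorem~\ref{p-w} with the approximating family $\psi_n=a\I_{(1/n,\infty)}+\psi$ as in Theorem~\ref{thtvart}, but replace the linear extension of $\psi$ on $(2l_0,\infty)$ by one that is \emph{bounded}, exploiting \eqref{erg-2}. The only substantive difference is the explicit choice of extension. The paper takes
\[
\psi(r)=\psi(2l_0)+\psi'(2l_0)\,\Phi_2(2l_0)\int_{2l_0}^r\frac{ds}{\Phi_2(s)},\quad r>2l_0,
\]
after first replacing $\Phi_2$ by its nondecreasing envelope $\Phi_2^*(r)=\inf_{s\ge r}\Phi_2(s)$ and enlarging $l_0$ so that $\inf_{r\ge l_0}\Phi_2(r)>0$. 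With this choice $\Phi_2(r)\psi'(r)\equiv \psi'(2l_0)\Phi_2(2l_0)$ is \emph{constant}, so the Lyapunov inequality on $(2l_0,\infty)$ is immediate and no further estimation is needed; concavity follows from the monotonicity of $\Phi_2^*$. Your exponential-integral extension also yields a bounded concave $\psi$, but your intermediate claim $\Phi_2(r)\psi'(r)\ge\lambda_0(\psi(r)-\psi(2l_0))$ is not generally valid (differentiate the difference and note it can be negative unless $\Phi_2'\ge 2\lambda_0$); fortunately your parenthetical fallback ``just use $\psi(r)\le\psi(\infty)<\infty$'' together with $\Phi_2(r)\psi'(r)\ge\inf_{s\ge l_0}\Phi_2(s)\cdot\psi'(2l_0)e^{-\lambda_0\int_{2l_0}^\infty\Phi_2^{-1}}>0$ is the correct route. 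Either way you still need the WLOG reduction (enlarge $l_0$, pass to $\Phi_2^*$) to secure a positive lower bound for $\Phi_2$ on all of $[l_0,\infty)$, not just for large $r$; without it the estimate on $[l_0,2l_0]$ could fail.
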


\begin{proof} Without loss of generality, we can and do assume that $l_0\ge 1$ is large enough such that $\inf_{r\ge l_0}\Phi_2(r)>0$ and $\Phi_2$ is increasing on $[l_0,\infty)$; otherwise, we can use $\Phi_2^*(r):=\inf_{s\ge r}\Phi_2(s)$ instead of $\Phi_2(r)$. Define
  $$\psi(r)=\begin{cases}
  c_1 r+ \int_0^r e^{-c_2 g(s)}\, ds ,& r\in [0, 2l_0]; \\
 \psi(2l_0)+ {\psi'(2l_0)}\Phi_2(2l_0)\int_{2l_0}^r\frac{1}{\Phi_2(s)}\,ds, & r\in(2l_0,\infty),
  \end{cases}$$
where $c_1,c_2>0$ are determined later. It is easy to see that $\psi\in C_b^1([0,\infty))$ is concave, due to (2) in Lemma \ref{lem-test-funct} and the increasing property of $\Phi_2$ on $[l_0,\infty)$.  For any $n \ge1$, define $\psi_{n}\in C^1([0,\infty))$ such that $\psi_{n}$ is strictly increasing and
  $$\psi_{n}(r)\begin{cases}
      = \psi(r)        &  0\le r\le 1/(n+1);\\
        \le a+\psi(r), & 1/(n+1)< r\le 1/n;\\
       = a+\psi(r),    &1/n\le r< \infty,
  \end{cases}$$
where $a>0$ is determined below. We still have \eqref{p-var1}, hence the function $\psi_n$ satisfies \eqref{p-w-1} for all $n\ge1$.

Let $\kappa\in (0,\kappa_0\wedge l_0]$ be the constant in the statement of the theorem, and $K_2>0$. On the one hand, take $c_1= e^{-c_2g(2l_0)}$, $c_2=2(K_2+K_1/\kappa)$ and
  $$a=\frac{2}{J_{\kappa}}\left(K_1(c_1+ 1)+\frac{2K_2 c_1}{c_1+ 1} \psi(\kappa)\right),$$ where $J_{\kappa}:=\inf_{0<s\le \kappa} J(s)>0$, thanks to \eqref{th10}.
Using ${\mathbf B(K_1, \Phi_2(r), l_0)}$ and following the arguments of \eqref{sim-prof1} and \eqref{sim-prof2}, we can get that for all $r\in [1/n, l_0]$,
  $$ \Theta_n(r) \le - \frac{2K_2 c_1}{c_1+ 1} \psi(r\vee \kappa)\le -\frac{2K_2 c_1}{c_1+ 1} \psi(\kappa).$$
On the other hand, by ${\mathbf B(K_1, \Phi_2(r), l_0)}$ again, if $r\in (l_0,2l_0]$,
  $$-\Phi_2(r)\psi_n'(r)= -\Phi_2(r) \psi'(r)= -\Phi_2(r) \big(c_1+ e^{-c_2 g(r)}\big)\le -2c_1 \Phi_2(l_0);$$
while for $r>2l_0$,
  \begin{align*}
  -\Phi_2(r)\psi_n'(r)= & -\Phi_2(r) \psi'(r)= -  {\psi'(2l_0)}\Phi_2(2l_0)=-2c_1\Phi_2(2l_0),
  \end{align*}
where the last two equalities follow from the definition of $\psi$. Combining all conclusions above with the fact that $\psi_n$ is uniformly bounded with respect to $n$, $\psi_{n}$ satisfies ${\mathbf C(\lambda, \kappa, n)}$ with some constant $\lambda>0$ for all $n\ge 1$ large enough.

Therefore, by Theorem \ref{p-w}, for any $x,y\in\R^d$,
  \begin{align*}
  \| \delta_xP_t- \delta_y P_t\|_{\var} &\le 2a^{-1} W_{a\I_{(0,\infty)}+\psi}(\delta_xP_t, \delta_y P_t)\le 2e^{-\lambda t}\Big(1+\frac{1}{a}\psi(|x-y|)\Big)\le c e^{-\lambda t}.
  \end{align*}
By now we have proved the desired assertion.
\end{proof}

At the end of this section, we give the

\begin{proof}[Proof of Proposition $\ref{p-ergo}$] Under ${\mathbf B(K_1r, \Phi_2(r), l_0)}$, it holds that for any $x\in\R^d$ with $|x|$ large enough,
  $$\frac{\langle b(x),x\rangle}{|x|}\le -\Phi_2(|x|)+\frac{\langle b(0),x\rangle }{|x|}\le -\frac{1}{2}\Phi_2(|x|),$$
where in the last inequality we have used the fact that $\liminf_{r\to\infty}\frac{\Phi_2(r)}{r}=\infty$. Let $f\in C^2(\R^d)$ such that $f(x)=\log (1+|x|)$ for all $|x|\ge 1$. Then, by \eqref{red}, we can easily establish the following Foster--Lyapunov type condition:
  \begin{equation}\label{ly}
  L_X f(x)\le - c_1 \frac{\Phi_2(|x|)}{1+|x|}+c_2,\quad x\in \R^d,
  \end{equation}
where $L_X$ is the generator of the process $(X_t)_{t\ge0}$ given by \eqref{SDE-generator}, and $c_1,c_2$ are two positive constants. On the other hand, since $b$ satisfies ${\mathbf B(K_1r, \Phi_2(r), l_0)}$ and $\liminf_{r\to\infty}\frac{\Phi_2(r)}{r}=\infty$, $b$ satisfies ${\mathbf B(K_1r, K_2r, l'_0)}$ for some constants $K_2,l_0'>0$, and so Theorem \ref{th1} holds, also thanks to the fact that the associated L\'evy measure $\nu$ satisfies \eqref{th13-11}. Then, there exist constants $\lambda,c>0$ such that for any $x,y\in\R^d$ and $t>0$,
  $$W_1(\delta_x P_t,\delta_yP_t)\le c e^{-\lambda t} |x-y|.$$
This implies that (e.g. see \cite[Theorem 5.10]{Chen1})
  $$\|P_tf\|_{{\rm Lip}}\le c e^{-\lambda t}\|f\|_{{\rm Lip}}$$
holds for any $t>0$ and any Lipschitz continuous function $f$, where $\|f\|_{{\rm Lip}}$ denotes the Lipschitz semi-norm with respect to the Euclidean norm $|\cdot|$. By the standard approximation, we know that the semigroup $(P_t)_{t\ge0}$ is Feller, i.e. for every $t>0$, $P_t$ maps $C_b(\R^d)$ into $C_b(\R^d)$. (Indeed, by \eqref{th13-11} and Corollary \ref{th22-1}(4) below, the semigroup $(P_t)_{t\ge0}$ is strongly Feller, i.e. for every $t>0$, $P_t$ maps $B_b(\R^d)$ into $C_b(\R^d)$, where $B_b(\R^d)$ denotes the class of bounded measurable functions on $\R^d$.) This along with  \eqref{ly}, $\liminf_{r\to\infty}\frac{\Phi_2(r)}{r}=\infty$ and \cite[Theorems 4.5]{MT} yields that the process $(X_t)_{t\ge0}$ has an invariant probability measure.

Furthermore, under the assumptions Theorem \ref{thm-strong-ergo} holds. Then, we can deduce from \eqref{thm-strong-ergo-1} that the process $(X_t)_{t\ge0}$ has at most one invariant probability measure, so by the above arguments, it admits a unique one. Indeed, let $\mu_1$ and $\mu_2$ be invariant probability measures of the process $(X_t)_{t\ge0}$. Then,
  $$\|\mu_1-\mu_2\|_{{\rm Var}}=\iint\|\delta_xP_t-\delta_yP_t\|_{{\rm Var}}\,\mu_1(dy)\,\mu_2(dx)\le ce^{-\lambda t}.$$
Letting $t\to\infty$, we find that $\mu_1=\mu_2$. Denote by $\mu$ the unique invariant probability measure. Therefore, by \eqref{thm-strong-ergo-1}, we have
  $$\|\delta_xP_t-\mu\|_{{\rm Var}}\le \int  \|\delta_xP_t-\delta_yP_t\|_{{\rm Var}}\,\mu(dy)\le ce^{-\lambda t}.$$
The proof is complete.
 \end{proof}

\section{Further applications of the refined basic coupling}

\subsection{Spatial regularity of semigroups}\label{section5}

As another application of the refined basic coupling for L\'evy processes, we shall study in this subsection the regularity of the semigroup $(P_t)_{t\ge0}$ for SDEs with L\'{e}vy noises, a topic which has attracted lots of interests in recent years. For instance, the Bismut--Elworthy--Li's derivative formula and gradient estimates for SDEs driven by (multiplicative) L\'{e}vy noise have been established in \cite{Zhang13, WXZ}. Note that, when the L\'{e}vy noise is reduced to a symmetric $\alpha$-stable process, the statement of Corollary \ref{th22-1} below is weaker than those in \cite{Zhang13, WXZ}; however, it works for more general L\'{e}vy noises. Besides, the drift term $b$ in our setting only satisfies the one-sided Lipschitz condition; while in \cite{Zhang13, WXZ} it is required to be in $C_b^1(\R^d)$, which is essentially due to the fact that the Malliavin calculus was used there.

Throughout this part, we assume that \eqref{th10} holds for the L\'{e}vy measure $\nu$ with some $\kappa_0>0$, and the drift term $b$ satisfies the following one-sided Lipschitz condition, i.e.\ there is a constant $ K_1>0$ such that for any $x,y\in\R^d$,
  $$\langle b(x)-b(y),x-y\rangle\le K_1|x-y|^2.$$

\begin{theorem}\label{p-regg} Assume that \eqref{th10} holds and $b$ satisfies the one-sided Lipschitz condition.
For some fixed $\varepsilon_0\in(0,\kappa_0]$, let $\phi\in C^1([0,2\varepsilon_0])$ be such that $\phi(0)=0$, $\phi'\ge 0$, and for all $0<\varepsilon\le \varepsilon_0$
$$A_\varepsilon (\phi):=\inf_{0<r\le \varepsilon} \Big\{ \frac{1}{2}J(r)(2\phi(r)-\phi(2r))-K_1 \phi'(r) r  \Big\}>0.$$
Then, for any $f\in B_b(\R^d)$ and $t>0$,
  \begin{equation}\label{e:reg}
  \sup_{x\neq y}\frac{{|P_t f(x)-P_t f(y)|}}{\phi(|x-y|)}
  \le 2\|f\|_\infty\inf_{\varepsilon\in(0,\varepsilon_0]}
  \bigg[\frac{1}{\phi(\varepsilon)}+\frac{1}{tA_{\varepsilon}(\phi)}\bigg].
  \end{equation}\end{theorem}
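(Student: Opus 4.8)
The plan is to run the refined basic coupling process $(X_t,Y_t)_{t\ge0}$ of Subsection~\ref{sectioncousde} with coupling parameter $\kappa=\varepsilon_0$ (admissible since $\varepsilon_0\le\kappa_0$), and to use the elementary bound
$$|P_tf(x)-P_tf(y)|=\Big|\widetilde{\Ee}^{(x,y)}\big[(f(X_t)-f(Y_t))\I_{\{T>t\}}\big]\Big|\le 2\|f\|_\infty\,\Pp^{(x,y)}(T>t),$$
where $T=\inf\{t\ge0:X_t=Y_t\}$ is the coupling time. Thus the whole statement reduces to proving that, for every $x\ne y$ and every $\varepsilon\in(0,\varepsilon_0]$,
$$\Pp^{(x,y)}(T>t)\le\phi(|x-y|)\Big[\tfrac{1}{\phi(\varepsilon)}+\tfrac{1}{t\,A_\varepsilon(\phi)}\Big].$$
When $|x-y|>\varepsilon$ this is trivial, since $\phi'\ge0$ gives $\phi(|x-y|)\ge\phi(\varepsilon)>0$ (note $A_\varepsilon(\phi)>0$ forces $\phi(\varepsilon)>0$) and the left side is $\le1$; so from now on I would assume $0<|x-y|\le\varepsilon$.

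\emph{Step 1: a Lyapunov inequality for $\widetilde{L}_X\phi$.} I would first extend $\phi$ to a $C^1$ function on $[0,\infty)$ with $\phi'\ge0$ (this changes nothing in the statement and makes the It\^o formula of Subsection~\ref{sec-pc} applicable). Plugging $f=\phi$ into the closed formula~\eqref{proofth2544} and using that $r:=|x-y|\le\varepsilon\le\kappa$ forces $\kappa\wedge r=r$, $(x-y)_\kappa=x-y$, and $\mu_{(x-y)_\kappa}(\R^d)\ge J(r)$, one gets
$$\widetilde{L}_X\phi(r)=\tfrac12\mu_{x-y}(\R^d)\big(\phi(2r)-2\phi(r)\big)+\tfrac{\phi'(r)}{r}\langle b(x)-b(y),x-y\rangle.$$
Since $A_\varepsilon(\phi)>0$ implies $2\phi(r)-\phi(2r)>0$, and since the one-sided Lipschitz condition together with $\phi'\ge0$ gives $\tfrac{\phi'(r)}{r}\langle b(x)-b(y),x-y\rangle\le K_1\phi'(r)r$, it follows that
$$\widetilde{L}_X\phi(r)\le\tfrac12 J(r)\big(\phi(2r)-2\phi(r)\big)+K_1\phi'(r)r\le-A_\varepsilon(\phi),\qquad 0<r\le\varepsilon.$$

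\emph{Step 2: stopping, Dynkin, and conclusion.} Put $r_t=|X_t-Y_t|$, $\tau_\varepsilon=\inf\{t\ge0:r_t\ge\varepsilon\}$, and $S=T\wedge\tau_\varepsilon$. Applying the It\^o/Dynkin formula to $\phi(r_{t\wedge S})$ and invoking Step~1 on the set $\{s<S\}$ (where $r_s\in(0,\varepsilon)$), one obtains
$$0\le\widetilde{\Ee}^{(x,y)}\big[\phi(r_{t\wedge S})\big]\le\phi(|x-y|)-A_\varepsilon(\phi)\,\widetilde{\Ee}^{(x,y)}[t\wedge S].$$
Hence $\widetilde{\Ee}^{(x,y)}[t\wedge S]\le\phi(|x-y|)/A_\varepsilon(\phi)$, and Markov's inequality gives $\Pp^{(x,y)}(S\ge t)\le\phi(|x-y|)/(t\,A_\varepsilon(\phi))$. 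On the other hand, on $\{\tau_\varepsilon\le t\wedge T\}$ one has $t\wedge S=\tau_\varepsilon$ and $r_{\tau_\varepsilon}\ge\varepsilon$, so $\phi(r_{t\wedge S})\ge\phi(\varepsilon)$ and therefore $\Pp^{(x,y)}(\tau_\varepsilon\le t\wedge T)\le\phi(|x-y|)/\phi(\varepsilon)$. Finally, on $\{T>t\}$ one has either $S\ge t$ or else $S<t$, in which case necessarily $S=\tau_\varepsilon<t$ and $\tau_\varepsilon<T$, i.e. $\tau_\varepsilon\le t\wedge T$; consequently
$$\Pp^{(x,y)}(T>t)\le\Pp^{(x,y)}(S\ge t)+\Pp^{(x,y)}(\tau_\varepsilon\le t\wedge T)\le\frac{\phi(|x-y|)}{t\,A_\varepsilon(\phi)}+\frac{\phi(|x-y|)}{\phi(\varepsilon)}.$$
Combining with $|P_tf(x)-P_tf(y)|\le2\|f\|_\infty\Pp^{(x,y)}(T>t)$, dividing by $\phi(|x-y|)$, and taking the supremum over $x\ne y$ and the infimum over $\varepsilon\in(0,\varepsilon_0]$ yields~\eqref{e:reg}.

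The one genuinely delicate point I expect is the rigorous justification of the Dynkin formula in Step~2: the test function $(x,y)\mapsto\phi(|x-y|)$ is only $C^1$ and the coupling kernel $\mu_{(x-y)_\kappa}(\R^d)$ is unbounded as $|x-y|\to0$, so the display above should be obtained first with $S$ replaced by $S\wedge\inf\{t:r_t\le1/m\}$ and then passed to the limit $m\to\infty$ (Fatou on the left, monotone convergence on the right); everything else is routine bookkeeping with the three stopping times $T$, $\tau_\varepsilon$, $S$.
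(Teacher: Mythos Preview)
Your proposal is correct and follows essentially the same route as the paper: derive the Lyapunov bound $\widetilde L_X\phi(r)\le -A_\varepsilon(\phi)$ for $0<r\le\varepsilon$ from \eqref{proofth2544}, apply Dynkin with the localized stopping times to get both the expected hitting-time bound and the exit-probability bound, and decompose $\{T>t\}$ accordingly. The only cosmetic differences are your choice $\kappa=\varepsilon_0$ (the paper takes $\kappa=\kappa_0$, which is equivalent here since $r\le\varepsilon_0\le\kappa_0$) and your slightly tighter event $\{\tau_\varepsilon\le t\wedge T\}$ in place of the paper's $\{T>S_\varepsilon\}$; the localization issue you flag at the end is exactly what the paper handles with the auxiliary stopping times $T_n=\inf\{t:r_t\le 1/n\}$.
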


\begin{proof} Let $\widetilde{L}=\widetilde{L}_X $ be the coupling operator given in \eqref{SDE-coup-op}.
For any $x,y\in\R^d$ with $0<|x-y|\le \varepsilon \le \varepsilon_0$, by applying \eqref{proofth2544} with $\kappa=\kappa_0$ and noticing that $\varepsilon_0\le \kappa_0$, we have
  \begin{equation}\label{e:sregu}
  \begin{split}
  \widetilde L\phi(|x-y|)
  &=\frac{1}{2} \mu_{x-y}(\R^d)\big[\phi(2|x-y|)-2\phi(|x-y|)\big]+ K_1{\phi'(|x-y|)}{|x-y|}\\
  &\le \frac{1}{2}J(|x-y|) \big[\phi(2|x-y|)-2\phi(|x-y|)\big]+ K_1\phi'(|x-y|)|x-y|\\
  &\le-A_\varepsilon (\phi)<0.
  \end{split}
  \end{equation}

Below we follow the same argument as in the proof of \cite[Theorem 1.2]{LW14}. We still use the coupling process $(X_t,Y_t)_{t\ge0}$ constructed
in Section \ref{sectioncousde},  and denote by $\widetilde{\Pp}^{(x,y)}$ and
$\widetilde{\Ee}^{(x,y)}$ the distribution and the expectation of
$(X_t,Y_t)_{t\ge0}$ starting from $(x,y)$, respectively. For any
$n\ge1$ and $\varepsilon\in(0,\varepsilon_0]$, we set
  $$\aligned S_\varepsilon&:=\inf\{t\ge0: |X_t-Y_t|>\varepsilon\},\\
  T_n&:=\inf\{t\ge0: |X_t-Y_t|\le 1/n\},\\
  T_{n,\varepsilon}&:=T_n\wedge S_\varepsilon.\endaligned$$
Furthermore, we still use the coupling time defined by
 $$T:=\inf\{t\ge0: X_t=Y_t\}.$$
Note that $T_n\uparrow T$ as $n\uparrow\infty.$
For any $x,$ $y\in\R^d$ with $0<|x-y|<\varepsilon\le \varepsilon_0$, we take $n$ large enough such that $|x-y|>1/n$. Then, by \eqref{e:sregu},
  $$\aligned
  0&\le\widetilde{\Ee}^{(x,y)}\phi\big(|X_{t\wedge T_{n,\varepsilon}}-Y_{t\wedge T_{n,\varepsilon}}|\big)=\phi(|x-y|)+\widetilde{\Ee}^{(x,y)}\bigg(\int_0^{t\wedge T_{n,\varepsilon}} \widetilde{L} \phi\big(|X_{s}-Y_{s}|\big)\,ds\bigg)\\
  &\le \phi(|x-y|)-A_\varepsilon (\phi)\widetilde{\Ee}^{(x,y)}(t\wedge
  T_{n,\varepsilon}).\endaligned$$
Therefore
  $$\widetilde{\Ee}^{(x,y)}(t\wedge T_{n,\varepsilon})\le
  \frac{ \phi(|x-y|)}{ A_\varepsilon(\phi)}.$$
Letting $t\rightarrow\infty$ and then $n\rightarrow\infty$, we
arrive at
  \begin{equation}\label{e:sregu-1}
  \widetilde{\Ee}^{(x,y)}(T_{}\wedge S_\varepsilon) \le \frac{ \phi(|x-y|)}{ A_\varepsilon(\phi)}.
  \end{equation}

On the other hand, again by \eqref{e:sregu}, for any $x$, $y\in\R^d$ with $1/n\le |x-y|<\varepsilon\le \varepsilon_0$,
  $$\aligned \widetilde{\Ee}^{(x,y)}\phi\big(|X_{t\wedge T_{n,\varepsilon}}-Y_{t\wedge T_{n,\varepsilon}}|\big)=\phi(|x-y|)+\widetilde{\Ee}^{(x,y)}\bigg(\int_0^{t\wedge T_{n,\varepsilon}}  \widetilde{L} \phi(| X_u-Y_u|)\,du\bigg)\le \phi(|x-y|),\endaligned$$
which yields that
  $$\phi(\varepsilon)\widetilde{\Pp}^{(x,y)}(S_\varepsilon<T_n\wedge t)\le \phi(|x-y|).$$
Letting $t\rightarrow\infty$ and then $n\to \infty$ leads to
  \begin{equation}\label{e:sregu-2}
  \widetilde{\Pp}^{(x,y)}(T>S_\varepsilon)\le\frac{ \phi(|x-y|)}{\phi(\varepsilon)}.
  \end{equation}

Therefore, for any $x$, $y\in\R^d$ with $0<|x-y|<\varepsilon\le \varepsilon_0$, by \eqref{e:sregu-1} and \eqref{e:sregu-2},
  $$\aligned
   \widetilde{\Pp}^{(x,y)}(T> t)
    &\le \widetilde{\Pp}^{{(x,y)}}(T\wedge S_\varepsilon>t)+\widetilde{\Pp}^{{(x,y)}}(T>S_\varepsilon)\\
  &\le \frac{\widetilde{\Ee}^{(x,y)}(T\wedge S_\varepsilon)}{t}+\frac{\phi(|x-y|)}{\phi(\varepsilon)}\le \phi(|x-y|)\bigg[\frac{1}{\phi(\varepsilon)}+\frac{1}{tA_{\varepsilon}(\phi)}\bigg].
  \endaligned$$
Hence, for any $f\in B_b(\R^d)$, $t>0$ and any $x$, $y\in\R^d$ with $0<|x-y|<\varepsilon\le \varepsilon_0$,
  \begin{align*}
  {|P_t f(x)-P_t f(y)|}&={|\Ee^xf(X_t)-\Ee^yf(Y_t)|}\\
  &={\big|\widetilde{{\Ee}}^{(x,y)}(f(X_t)-f(Y_t))\big|}={\big|\widetilde{{\Ee}}^{(x,y)}(f(X_t)-f(Y_t))\I_{\{T> t\}}\big|}\\
  &\le 2\|f\|_\infty {\widetilde{{\Pp}}^{(x,y)}(T> t)}\le 2\|f\|_\infty\phi(|x-y|)\bigg[\frac{1}{\phi(\varepsilon)}+\frac{1}{tA_{\varepsilon}(\phi)}\bigg].
  \end{align*}
As a result,
  $$\sup_{|x-y|\le\varepsilon}\frac{{|P_t f(x)-P_t f(y)|}}{\phi(|x-y|)}
  \le 2\|f\|_\infty\bigg[\frac{1}{\phi(\varepsilon)}+\frac{1}{tA_{\varepsilon}(\phi)}\bigg].$$
This along with the fact that
  $$\sup_{|x-y|\ge\varepsilon}\frac{{|P_t f(x)-P_t f(y)|}}{\phi(|x-y|)}   \le \frac{2\|f\|_\infty}{\phi(\varepsilon)}$$
further gives us that for all $\varepsilon\in(0,\varepsilon_0]$,
  $$\sup_{x\neq y}\frac{{|P_t f(x)-P_t f(y)|}}{\phi(|x-y|)}\le
  2\|f\|_\infty \bigg[\frac{1}{\phi(\varepsilon)}+\frac{1}{tA_{\varepsilon}(\phi)}\bigg]. $$
The desired assertion follows from the inequality above by taking infimum with respect to $\varepsilon\in(0,\varepsilon_0]$ in the right hand side.
\end{proof}

As a consequence of Theorem \ref{p-regg}, we have the following result.
\begin{proposition}\label{th22}
Assume that \eqref{th10} holds and $b$ satisfies the one-sided Lipschitz condition. If there exist a constant $\varepsilon_0 \in(0,\kappa_0]$ and  a function $\phi\in C^3([0,2\varepsilon_0])$ such that $\phi(0)=0$, $\phi'\ge0$, $\phi''\le 0$ and $\phi'''\ge0$, and that
  \begin{equation}\label{e:th22}
  \lim_{\varepsilon\to0}\sup_{0<r\le \varepsilon} J(r) r^2 \phi''(2r)<0,
  \end{equation}
then there are constants $C>0$ and $\varepsilon'_0\in(0,\varepsilon_0]$ such that for any $f\in B_b(\R^d)$ and $t>0$,
  \begin{equation}\label{e:th2e1}
  \sup_{x\neq y}\frac{{|P_t f(x)-P_t f(y)|}}{\phi(|x-y|)}
  \le C\|f\|_\infty\inf_{\varepsilon\in(0,\varepsilon'_0]}
  \bigg[\frac{1}{\phi(\varepsilon)}+\frac{1}{tB_{\varepsilon}(\phi)}\bigg],
  \end{equation} where $$  B_{\varepsilon}(\phi):=-\sup_{0<r\le \varepsilon} J(r) r^2 \phi''(2r).$$
\end{proposition}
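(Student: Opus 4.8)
The plan is to obtain Proposition~\ref{th22} as a corollary of Theorem~\ref{p-regg}. The hypotheses on $\phi$ are designed precisely so that the quantity $A_\varepsilon(\phi)$ appearing in Theorem~\ref{p-regg} dominates a fixed multiple of $B_\varepsilon(\phi)$ for all sufficiently small $\varepsilon$; granting this, \eqref{e:th2e1} follows at once from \eqref{e:reg} after shrinking the range of $\varepsilon$ over which the infimum is taken.

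The first ingredient is the elementary inequality
\[
  2\phi(r)-\phi(2r)\ \ge\ -\,r^2\,\phi''(2r),\qquad 0<r\le \varepsilon_0/2,
\]
which is where $\phi''\le 0$ and $\phi'''\ge 0$ are used. To prove it, put $g(r)=2\phi(r)-\phi(2r)$; then $g(0)=0$ and $g'(r)=2\big(\phi'(r)-\phi'(2r)\big)=-2\int_r^{2r}\phi''(s)\,ds$. Since $\phi'''\ge 0$ makes $\phi''$ nondecreasing, $\phi''(s)\le\phi''(2r)$ for $s\in[r,2r]$, whence $g'(r)\ge -2r\,\phi''(2r)\ge 0$; integrating once more and using that $u\mapsto-\phi''(2u)$ is nonincreasing gives $g(r)=\int_0^r g'(u)\,du\ge -2\int_0^r u\,\phi''(2u)\,du\ge -r^2\phi''(2r)$. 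Consequently $\tfrac12 J(r)\big(2\phi(r)-\phi(2r)\big)\ge \tfrac12\big(-J(r)\,r^2\,\phi''(2r)\big)$ for such $r$.

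Next I would absorb the drift term. By \eqref{e:th22} there are $\varepsilon_1\in(0,\varepsilon_0/2]$ and $\delta>0$ with $-J(r)r^2\phi''(2r)\ge\delta$ for all $0<r\le\varepsilon_1$, and $M:=\sup_{[0,\varepsilon_0]}\phi'<\infty$ since $\phi\in C^3([0,\varepsilon_0])$. Set $\varepsilon_0':=\varepsilon_1\wedge\frac{\delta}{4K_1M}$ if $K_1M>0$, and $\varepsilon_0':=\varepsilon_1$ otherwise. Then for $0<r\le\varepsilon_0'$ one has $K_1\phi'(r)r\le K_1Mr\le\tfrac{\delta}{4}\le\tfrac14\big(-J(r)r^2\phi''(2r)\big)$, so together with the previous step,
\[
  \tfrac12 J(r)\big(2\phi(r)-\phi(2r)\big)-K_1\phi'(r)r\ \ge\ \tfrac14\big(-J(r)r^2\phi''(2r)\big),\qquad 0<r\le\varepsilon_0'.
\]
Taking the infimum over $0<r\le\varepsilon$ yields $A_\varepsilon(\phi)\ge\tfrac14 B_\varepsilon(\phi)>0$ for every $\varepsilon\in(0,\varepsilon_0']$.

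Finally, $\phi$ restricted to $[0,\varepsilon_0']$ still meets all the requirements of Theorem~\ref{p-regg} (with $\varepsilon_0'$ playing the role of $\varepsilon_0$, and $\varepsilon_0'\le\kappa_0$), so that theorem gives, for all $f\in B_b(\R^d)$ and $t>0$,
\[
  \sup_{x\neq y}\frac{|P_tf(x)-P_tf(y)|}{\phi(|x-y|)}
  \le 2\|f\|_\infty\inf_{\varepsilon\in(0,\varepsilon_0']}\Big[\tfrac1{\phi(\varepsilon)}+\tfrac1{tA_\varepsilon(\phi)}\Big]
  \le 8\|f\|_\infty\inf_{\varepsilon\in(0,\varepsilon_0']}\Big[\tfrac1{\phi(\varepsilon)}+\tfrac1{tB_\varepsilon(\phi)}\Big],
\]
which is \eqref{e:th2e1} with $C=8$. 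There is no serious obstacle here: the only point requiring some care is the Taylor--monotonicity estimate of the first step, together with the observation that \eqref{e:th22} forces $-J(r)r^2\phi''(2r)$ to stay bounded away from $0$ near the origin, which is exactly what allows the bounded drift contribution $K_1\phi'(r)r$ to be swallowed; the rest is bookkeeping.
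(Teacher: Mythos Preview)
Your proof is correct and follows essentially the same route as the paper: establish $2\phi(r)-\phi(2r)\ge -r^2\phi''(2r)$ from $\phi'''\ge 0$, absorb the drift term for small $r$, conclude $A_\varepsilon(\phi)\ge\tfrac14 B_\varepsilon(\phi)$, and invoke Theorem~\ref{p-regg}. The only cosmetic differences are that the paper obtains the key inequality via the double integral $2\phi(r)-\phi(2r)=-\int_0^r\!\int_s^{r+s}\phi''(u)\,du\,ds$ and absorbs the drift using the concavity bound $\phi'(r)r\le\phi(r)$ rather than your uniform bound $\phi'\le M$.
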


\begin{proof}
Since $\phi'''\geq 0$, we have
  $$2\phi(r)-\phi(2r)=-\int_0^r\int_s^{r+s} \phi''(u)\,du\,ds\ge -\phi''(2r) r^2.$$
On the other hand, by $\phi''\le 0$ and the fact that $\phi(0)=0$,
  $$\phi'(r)r\le \int_0^r \phi'(s)\,ds =\phi(r).$$
Therefore,
  $$\aligned \frac12 J(r)(2\phi(r)-\phi(2r))- K_1\phi'(r)r &\geq -\frac12 J(r)r^2 \phi''(r) -K_1\phi(r) \geq \frac12 B_\varepsilon(\phi) -K_1\phi(r). \endaligned$$
According to \eqref{e:th22}, we know that there is a constant $\varepsilon'_0\in(0,\varepsilon_0]$ such that for all $\varepsilon\in(0,\varepsilon_0']$,
  $$A_\varepsilon (\phi)\ge \frac{1}{4} B_\varepsilon(\phi)>0.$$
Then, the desired assertion \eqref{e:th2e1} follows immediately from Theorem \ref{p-regg}.
\end{proof}

Furthermore, we have the following more explicit regularity properties of the semigroup $(P_t)_{t\ge0}$.

\begin{corollary}\label{th22-1}
Assume that \eqref{th10} holds for some $\kappa_0>0$ and $b$ satisfies the one-sided Lipschitz condition.
\begin{itemize}
\item[(1)]
If for some $\theta>0$, $$\lim_{\varepsilon\to0}\inf_{0<r\le \varepsilon} J(r) r \left(\log\frac{1}{r}\right)^{-(1+\theta)}>0,$$ then there exist constants $C>0$ and $\varepsilon'_0 \in (0,\kappa_0]$ such that for all $f\in B_b(\R^d)$ and $t>0$,
$$
  \sup_{x\neq y}\frac{{|P_t f(x)-P_t f(y)|}}{|x-y|}
  \le C\|f\|_\infty\inf_{\varepsilon\in(0,\varepsilon'_0]}
  \bigg[\frac{1}{\varepsilon}+\frac{1}{t\inf_{0<r\le \varepsilon} J(r) r\left(\log\frac{1}{r}\right)^{-(1+\theta)}}\bigg].
  $$

\item[(2)]
If for some $\theta>0$,
  $$\lim_{\varepsilon\to0}\inf_{0<r\le \varepsilon} J(r) r\left(\log\frac{1}{r}\right)^{\theta-1} >0,$$
then there exist constants $C>0$ and $\varepsilon_0'\in(0,\kappa_0]$ such that for all $f\in B_b(\R^d)$ and $t>0$,
$$
  \sup_{x\neq y}\frac{{|P_t f(x)-P_t f(y)|}}{|x-y|\, \big|\!\log|x-y|\big|^\theta}
  \le C\|f\|_\infty\inf_{\varepsilon\in(0,\varepsilon_0']}
  \bigg[\frac{1}{\varepsilon |\log \varepsilon|^\theta}+\frac{1}{t\inf_{0<r\le \varepsilon} J(r) r\left(\log\frac{1}{r}\right)^{\theta-1}}\bigg].
  $$

\item[(3)] If for some $\theta\in(0,1)$
  $$\lim_{\varepsilon\to0}\inf_{0<r\le \varepsilon} J(r) r^\theta>0,$$
then there exist constants $C>0$ and $\varepsilon_0' \in(0,\kappa_0]$ such that for all $f\in B_b(\R^d)$ and $t>0$,
$$
  \sup_{x\neq y}\frac{{|P_t f(x)-P_t f(y)|}}{|x-y|^\theta}
  \le C\|f\|_\infty\inf_{\varepsilon\in(0,\varepsilon_0']}
  \bigg[\frac{1}{\varepsilon^\theta}+\frac{1}{t\inf_{0<r\le \varepsilon} J(r) r^\theta}\bigg].
  $$

\item[(4)] If for some $\theta>0$,
  $$\lim_{\varepsilon\to0}\inf_{0<r\le \varepsilon} J(r)\Big(\log \frac{1}{r}\Big)^{-(1+\theta)}>0,$$
then there exist constants $C>0$ and $\varepsilon'_0\in(0,\kappa_0]$ such that for all $f\in B_b(\R^d)$ and $t>0$,
$$
  \sup_{x\neq y}\frac{{|P_t f(x)-P_t f(y)|}}{\big|\log |x-y|\big|^{-\theta}}
  \le C\|f\|_\infty\inf_{\varepsilon\in(0,\varepsilon_0']}
  \bigg[\frac{1}{|\log\varepsilon|}+\frac{1}{t\inf_{0<r\le \varepsilon} J(r) \left(\log\frac{1}{r}\right)^{-(1+\theta)}}\bigg].
  $$
\end{itemize}
\end{corollary}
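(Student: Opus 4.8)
The plan is to obtain all four items as applications of Proposition \ref{th22}: in each case one supplies a concave test function $\phi$ whose behaviour near the origin matches, up to a multiplicative constant, the modulus of continuity standing in the denominator of the asserted estimate. Concretely I would take $\phi(r)=r^{\theta}$ for (3), $\phi(r)=r\,(\log\tfrac1r)^{\theta}$ for (2), and $\phi(r)=(\log\tfrac1r)^{-\theta}$ for (4); for (1) the naive candidate $\phi(r)=r$ is not admissible, since then $\phi''\equiv0$ and \eqref{e:th22} fails, so I would instead use a concave perturbation with $\phi'(r)=1-c_{0}(\log\tfrac1r)^{-\theta}$ on a small interval near the origin (for a suitable small $c_{0}>0$), so that $\phi'(0^{+})=1$, $\phi(r)$ is comparable to $r$, and the logarithmic correction forces $\phi''(2r)$ to be negative of the right size. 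In every case $\phi$ is continued to a bounded nondecreasing function on $[\varepsilon_{0}',\infty)$ for a sufficiently small $\varepsilon_{0}'\in(0,\kappa_{0}]$.

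Once $\phi$ is fixed the verification reduces to two routine differentiations. First one checks, on a small enough $(0,\varepsilon_{0}']$, the sign chain $\phi'\ge0$, $\phi''\le0$, $\phi'''\ge0$ required by Proposition \ref{th22}; for the logarithmic profiles this rests on $\tfrac{d}{dr}(\log\tfrac1r)^{a}=-\tfrac{a}{r}(\log\tfrac1r)^{a-1}$, so that each further differentiation flips the overall sign and, for $r$ small, the dominant term carries the correct sign as soon as $\varepsilon_{0}'$ is chosen so small that $\log\tfrac1r$ swamps the fixed constants $\theta$ and $\theta\pm1$. Second one computes $\phi''(2r)$ and reads off that $J(r)r^{2}\phi''(2r)$ is comparable to $-J(r)r(\log\tfrac1r)^{-(1+\theta)}$ in case (1), to $-J(r)r(\log\tfrac1r)^{\theta-1}$ in case (2), to $-J(r)r^{\theta}$ in case (3), and to $-J(r)(\log\tfrac1r)^{-(1+\theta)}$ in case (4); thus the hypothesis imposed on $J$ in each item is exactly the requirement $\lim_{\varepsilon\to0}\sup_{0<r\le\varepsilon}J(r)r^{2}\phi''(2r)<0$ of \eqref{e:th22}, while $B_{\varepsilon}(\phi)=-\sup_{0<r\le\varepsilon}J(r)r^{2}\phi''(2r)$ is comparable to the infimum quantity appearing in the corresponding bound and $\phi(\varepsilon)$ is comparable to $\varepsilon$, to $\varepsilon(\log\tfrac1\varepsilon)^{\theta}$, to $\varepsilon^{\theta}$ and to $(\log\tfrac1\varepsilon)^{-\theta}$ respectively. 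Substituting these comparabilities into the conclusion \eqref{e:th2e1} of Proposition \ref{th22} and absorbing all constants into $C$ produces the four displayed estimates.

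The step I expect to be most delicate — although it is still only a computation, with no new idea needed — is checking the positivity of $\phi'''$ near the origin for the two genuinely logarithmic profiles $\phi(r)=r(\log\tfrac1r)^{\theta}$ (with $0<\theta<1$) and $\phi(r)=(\log\tfrac1r)^{-\theta}$: differentiating twice yields a finite sum of terms $c\,r^{-2}(\log\tfrac1r)^{b}$ of mixed signs, and one must confirm that the positive term bearing the largest power of $\log\tfrac1r$ dominates on a small enough interval, which is precisely what pins down how small $\varepsilon_{0}'$ must be taken. A secondary point worth recording is that in each case the chosen $\phi$ lies in $C^{3}((0,\varepsilon_{0}'])$ and is only continuous, with value $0$, at the origin, rather than being of class $C^{3}$ on the closed interval; but the proofs of Theorem \ref{p-regg} and Proposition \ref{th22} use $\phi$ only through the double integral $\int_{0}^{r}\!\int_{s}^{r+s}\phi''(u)\,du\,ds$ and the monotonicity of $\phi'$ and $\phi''$, so this causes no difficulty.
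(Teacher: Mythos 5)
Your proposal is correct and follows essentially the same route as the paper, which likewise deduces all four items from Proposition \ref{th22} with the test functions $\phi(r)=r\big(1-\log^{-\theta}(1/r)\big)$, $\phi(r)=r\log^{\theta}(1/r)$, $\phi(r)=r^{\theta}$ and $\phi(r)=\log^{-\theta}(1/r)$ for $r$ small. Your choice for item (1), defined through $\phi'(r)=1-c_{0}\log^{-\theta}(1/r)$, is only a cosmetic variant of the paper's function (same asymptotics of $\phi$ and $\phi''$), and your closing remark about $\phi$ being $C^{3}$ only on $(0,\varepsilon_{0}']$ correctly notes, and resolves, a point the paper passes over silently.
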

\begin{proof}
The assertions follow from Proposition \ref{th22} by taking $\phi(r)=r(1-\log^{-\theta}(1/r))$, $\phi(r)=r\log^\theta(1/r)$, $\phi(r)=r^{\theta}$ and $\phi(r)=\log^{-\theta}(1/r)$ for $r>0$ small enough, respectively.
\end{proof}

\subsection{Some discussions on results related to $L^1$-Wasserstein
distance}\label{5.3}

In this part, motivated by \cite[Subsections 2.3, 2.4 and Section
3]{Eberle}, we discuss some variations of Theorem \ref{thtpw},
including perturbations of the drift, local versions and  generalizations to
product spaces. Since the estimates in Theorem \ref{thtpw} are
explicit, based on the results in this subsection, applications to
overdamped Langevin SDEs with jumps and systems of weakly
interacting SDEs with jumps can be treated in the same way as in
\cite{Eberle}.

\subsubsection{Perturbations of the drift}
Let
  \begin{equation}\label{per}
  b^\ast(x)=b(x)+\gamma(x),\quad x\in \R^d,
  \end{equation}
where $b$ satisfies condition (b) in Theorem \ref{thtpw}. Let $l_0$ be the constant given there. We now suppose that $\gamma$ satisfies the assumption ${\mathbf B(\Phi_\gamma(r),0, l_0)}$ with some nonnegative measurable function $\Phi_{\gamma}$, i.e.\  for any $x,y\in\R^d$,
  $$\frac{\langle \gamma(x)-\gamma(y), x-y\rangle}{|x-y|}\leq
  \begin{cases}
  \Phi_\gamma(|x-y|), & |x-y|< l_0;\\
  0, & |x-y|\ge l_0.
  \end{cases} $$
Moreover, we suppose that $\Phi_\gamma$ also satisfies condition (b) in Theorem \ref{thtpw}, i.e.\  $\Phi_\gamma \in C([0,2l_0])\cap C^2((0,2l_0])$ is a nonnegative concave function such that $\Phi_\gamma(0)=0$ and $\Phi''_\gamma$ is nondecreasing.
 Then, the following perturbation result for Theorem \ref{thtpw} holds.

\begin{proposition}\label{P:per}
Let the drift $b^\ast$ be given by \eqref{per} with $b$ and $\gamma$ satisfying the assumptions stated above, and let $(P_t^\ast)_{t\ge 0}$ be the transition semigroup corresponding to the SDE \eqref{s1} by replacing $b$ with $b^\ast$. Under conditions $(a)$ and $(c)$ in Theorem $\ref{thtpw}$, if the function $g_\gamma(r)= \frac2{c_2} \int_0^r \frac{\Phi_\gamma(s)}{s \sigma(s)}\,ds$ is well defined for all $r\in [0,2l_0]$, then there is a constant $C^\ast>0$ such that for  any $x,y\in\R^d$ and $t>0$,
  $$  W_1(\delta_x P^\ast_t, \delta_y P^\ast_t)\le C^\ast e^{-\lambda^\ast t} |x-y|,$$
where
  $$\lambda^\ast=\frac{(2K_2)\wedge g_1(2l_0)^{-1}}{1+ \exp\big\{ \big[g(2l_0)+g_\gamma(2l_0)\big] \big[ (2K_2)\wedge g_1(2l_0)^{-1} \big] \big\}},$$
and $g_1$ and $g$ are defined in Theorem $\ref{thtpw}$. In particular,
  $$\lambda^\ast\ge \lambda\, e^{-g_\gamma(2l_0)[ (2K_2)\wedge g_1(2l_0)^{-1} ]} ,$$
where $\lambda$ is given by \eqref{pwcon}.
\end{proposition}

\begin{proof}
It is easy to see that, under the assumptions of the proposition, the drift $b^\ast$ satisfies ${\mathbf B(\Phi_1(r)+ \Phi_\gamma(r), K_2r, l_0)}$. Then, the desired assertion follows from Theorem \ref{thtpw}.
\end{proof}

\subsubsection{Local exponential contractivity}

Sometimes, the drift $b$ does not satisfy \eqref{th111}, but the process $(X_t)_{t\ge0}$ will stay with high probability inside some ball for a long time. Similar to \cite[Theorem 6]{Eberle}, we are still able to obtain the exponential contractivity up to an error term determined by the exit probabilities of the process $(X_t)_{t\ge0}$ from the ball.

In the following, we assume that for some fixed $R>0$, there are constants
$l_0:=l_0(R)\in [0,R]$, $K_2:=K_2(R)>0$ and a nonnegative measurable
function $\Phi_1:=\Phi_{1,R}$ satisfying  condition (b) in Theorem
\ref{thtpw} such that for any $x,y\in\R^d$ with $|x-y|\leq R$,
  $$ \frac{\langle b(x)-b(y), x-y\rangle}{|x-y|}\le \Phi_1(|x-y|)-\big[\Phi_1(|x-y|) +K_2|x-y| \big]\I_{\{l_0<|x-y|\le R\}}.$$
Then, we have the following statement for local exponential contractivity.

\begin{proposition}\label{local}
Suppose that the drift $b$ satisfies the assumptions above for some $R>0$, and assume that conditions $(a)$ and $(c)$ in Theorem $\ref{thtpw}$ are also fulfilled. Then, for any $t>0$ and $x,y\in\R^d$,
  $$W_{\psi_R}(\delta_xP_t,\delta_yP_t)\le e^{-\lambda t}\psi_R(|x-y|)+\psi_R(R)\big[\Pp^x(\tau_{B(0,R/2)}\le t)+\Pp^y(\tau_{B(0,R/2)}\le t)\big],$$
where $\psi_R(r)=\psi(r\wedge R)$,
$\tau_{B(0,R)}=\inf\{t>0:|X_t|>R\}$, and $\psi$ and $\lambda$ the
function and the constant defined in Theorem $\ref{thtpw}$
respectively.
\end{proposition}

\begin{proof}
Following the argument of Theorem \ref{thtpw}, we have
  $$\widetilde L \psi_{R}(|x-y|)\le -\lambda  \psi_R(|x-y|),\quad 0<|x-y|\le R,$$
where $\widetilde{L}=\widetilde{L}_X $ is the coupling operator
given in \eqref{SDE-coup-op}.  Let $(X_t,Y_t)_{t\ge0}$ be the
coupling process constructed in Subsection \ref{sectioncousde}.
According to the proof of Theorem \ref{p-w}, we can get that for any
$x,y\in\R^d$ with $0<|x-y|\leq R$ and any $t\ge0$,
  $$\widetilde{\Ee}^{(x,y)}[e^{\lambda(t\wedge\tilde\tau_R)} \psi_R(|X_{t\wedge \tilde\tau_R}-Y_{t\wedge \tilde\tau_R}|)]\leq \psi_R(|x-y|),$$
where $\tilde \tau_R=\inf\{t>0:|X_t-Y_t|\ge R\}.$ Therefore,
  \begin{align*}
  \widetilde{\Ee}^{(x,y)}\psi_R(|X_t-Y_t|) &\le \widetilde{\Ee}^{(x,y)}(\psi_R(|X_t-Y_t|):\tilde \tau_R>t)+ \psi(R)\widetilde\Pp^{(x,y)}(\tilde \tau_R\le t)\\
  &\le e^{-\lambda t}\widetilde{\Ee}^{(x,y)}[e^{\lambda(t\wedge\tilde\tau_R)} \psi_R(|X_{t\wedge \tilde\tau_R}-Y_{t\wedge \tilde\tau_R}|)] \\
  &\hskip14pt + \psi(R)\big(\Pp^x(\tau_{B(0,R/2)}\le t)+\Pp^y(\tau_{B(0,R/2)}\le t)\big)\\
  &\le \psi_R(|x-y|)e^{-\lambda t}+\psi(R)\big(\Pp^x(\tau_{B(0,R/2)}\le t)+\Pp^y(\tau_{B(0,R/2)}\le t)\big),
  \end{align*}
which proves the desired assertion.
\end{proof}

As an application of Proposition \ref{local}, we can consider the
local exponential contractivity for the equation \eqref{s1} in $\R^{d-1}$, with $b(x)=(b^1(x),\ldots, b^{d-1}(x))$ given by
  $$b^i(x)=d^2(x^{i+1}-2x^i+x^{i-1})+V'(x^i),\quad i=1,\ldots,d-1,$$
and $x^0=x^d=0$, where $x=(x^1, x^2, \ldots, x^{d-1})\in\R^{d-1}$ and $V:\R\to \R$ is a  $C^2$-function such that $V''\ge -L$ for some finite constant $L\in\R$. Such an equation is called a stochastic heat equation with jumps. See the related discussions for diffusions in \cite[Example 5]{Eberle}.

\subsubsection{Exponential contractivity on product spaces}

We consider a system
  \begin{equation}\label{interlevy}
  dX_t^i=b^i(X_t)+dZ_t^i,\quad X^i_0=x^i,\quad i=1,\ldots,n
  \end{equation}
of $n$ interacting SDEs with jumps. Here $(Z_t^i)_{t\ge0}$ $(i=1,\ldots,n)$ are independent L\'evy processes in $\R^{d_i}$, and $b^i:\R^d\to \R^{d_i}$ $(i=1,\ldots,n)$ are measurable functions with $d=\sum_{i=1}^nd_i$. We assume that the system \eqref{interlevy} has a unique strong solution, and denote by $X=(X_t)_{t\ge0}$ this unique solution.

In the following, we assume that
  $$b^i(x)=b_0^i(x^i)+\gamma^i(x),\quad i=1, \ldots,n,$$
where for any $1\le i\le n$, $b^i_0:\R^{d_i}\to \R^{d_i}$ is a
measurable function, and $\gamma^i:\R^d\to \R^{d_i}$ is a
sufficiently small perturbation. If $\gamma^i\equiv0\,
(i=1,\ldots,n)$, then the components $X^1, \ldots, X^n$ of the
process $X$ are independent. To study contraction properties of the
process $X$, we follow \cite[Section 3.1]{Eberle} and consider the
distance function on $\R^d$ of the form
  $$d_{\psi,w}(x,y)=\sum_{i=1}^n w_i \psi_i(|x^i-y^i|),$$
where $w_i\in(0,1]$ are positive weights, and $\psi_i:[0,\infty)\to [0,\infty)$ are strictly increasing concave $C^1$-functions with $\psi_i(0)=0$. In many applications, one can choose $w_i=1$ for all $i=1,\ldots,n$. The corresponding distance will then be denoted by $d_{\psi,1}$. In particular,
  $$d_{l^1}(x,y):=\sum_{i=1}^n|x^i-y^i|.$$
We shall denote by $W_{\psi,w}$ and $W_{l^1}$ the Wasserstein distances on $\mathscr P(\R^d)$ corresponding to $d_{\psi,w}$ and $d_{l^1}$, respectively.

For any $1\le i\le n$, let $\nu_i$ be the L\'evy measure for the L\'evy process $(Z_t^i)_{t\ge0}$. We assume that $\nu_i$ and $b_0^i$ satisfy conditions (a), (b) and (c) in Theorem \ref{thtpw}. Let $\psi_i$, $c_i=c_{1,i}$ and $\lambda_i$ be the corresponding function and constants defined in Theorem \ref{thtpw}, respectively. Note that, $ \psi_i'(0)>0$ and $\psi_i'(r)\le \psi_i'(0)$ for all $r\ge0$ and $1\le i\le n$.

Now, we can state the main result in this part.

\begin{proposition}\label{P:prod}
Under all the assumptions above, if there exist constants $\varepsilon_i\in[0,\lambda_i),\, 1\le i\le n$ such that for any $x,y\in\R^d$,
  \begin{equation}\label{con-remin}
  \sum_{i=1}^n w_i\psi_i'(0)|\gamma^i(x)-\gamma^i(y)|\le \sum_{i=1}^n \varepsilon_i w_i\psi_i(|x^i-y^i|),
  \end{equation}
then for any $t\ge0$ and $x,y\in \R^d$,
  $$W_{\psi, w}(\delta_xP_t, \delta_y P_t)\le e^{-\lambda t}d_{\psi,w}(x,y)$$
and
  $$W_{l^1}(\delta_xP_t, \delta_y P_t)\le C e^{-\lambda t}d_{l^1}(x,y),$$
where
  $$\lambda=\min_{1\le i\le n} (\lambda_i-\varepsilon_i),\quad C=\frac{\max_{1\le i\le n}(c_i+1)}{\min_{1\le i\le n} c_iw_i}.$$
 \end{proposition}

We need some preparations for the proof, especially the coupling on
the product space. For any $x=(x^1,x^2,\ldots, x^n)\in \R^d$ and
$z^i\in \R^{d_i}$ with $1\le i\le n$, we write $x+z^i$ for
$(x^1,\ldots, x^{i-1}, x^i+z^i, x^{i+1},\ldots, x^n)$. Then the
generator of the process $X$ acting on $C_b^2(\R^d)$ is given by
  $$Lf(x)=\langle b(x),\nabla f(x)\rangle +\sum_{i=1}^n \int_{\R^{d_i}}\Big(f(x+z^i)-f(x)-\langle\nabla_{x^i} f(x), z^i\rangle\I_{\{|z^i|\le 1\}}\Big)\,\nu_i(dz^i),$$
where $\nabla_{x^i} f(x)$ is the partial gradient of $f(x)$. For any
$1\leq i\leq n$, let $\mu^i_{x^i}=\nu_i\wedge (\delta_{x^i}\ast
\nu_i)$ and $\kappa^i\in (0,\kappa_0^i]$, where $\kappa_0^i$ is the
constant in the condition \eqref{th10} for $\nu_i$. Motivated by
\eqref{proofth24}, we define the operator $\widetilde L$ as follows:
for any $h\in C_b^2(\R^d\times \R^d)$,
  \begin{align*}
  \widetilde L h(x,y)  &= \<b(x),\nabla_x h(x,y)\> + \<b(y),\nabla_y h(x,y)\>\\
  &\quad +\sum_{i=1}^n \bigg[\frac{1}{2}\int_{\R^{d_i}}\! \Big( h(x+z^i,y+ z^i+(x^i-y^i)_{\kappa^i})-h(x,y)-\langle\nabla_{x^i} h(x,y), z^i\rangle \I_{\{|z^i|\le 1\}}\cr
    &\hskip65pt -\langle\nabla_{y^i} h(x,y), z^i+(x^i-y^i)_{\kappa^i}\rangle \I_{\{|z^i+(x^i-y^i)_{\kappa^i}|\le 1\}}\Big)\,\mu^i_{(y^i-x^i)_{\kappa^i}}(dz^i)\cr
    &\hskip45pt +\frac{1}{2}\int_{\R^{d_i}}\! \Big( h(x+z^i,y+ z^i+(y^i-x^i)_{\kappa^i})-h(x,y)-\langle\nabla_{x^i} h(x,y), z^i\rangle \I_{\{|z^i|\le 1\}}\\
    &\hskip65pt -\langle\nabla_{y^i} h(x,y), z^i+(y^i-x^i)_{\kappa^i}\rangle \I_{\{|z^i+(y^i-x^i)_{\kappa^i}|\le 1\}}\Big)\,\mu^i_{(x^i-y^i)_{\kappa^i}}(dz^i)\cr
    &\hskip45pt +\int_{\R^{d_i}}\! \Big( h(x+z^i,y+z^i)-h(x,y)-\langle\nabla_{x^i} h(x,y), z^i\rangle \I_{\{|z^i|\le 1\}}\cr
    &\hskip65pt -\langle\nabla_{y^i} h(x,y), z^i\rangle \I_{\{|z^i|\le 1\}}\Big)\,\Big(\nu_i -\frac{1}{2}\mu^i_{(x^i-y^i)_{\kappa^i}}
    -\frac{1}{2}\mu^i_{(y^i-x^i)_{\kappa^i}}\Big)(dz^i)\bigg].
  \end{align*}
Following the arguments at the end of Subsection \ref{coupling-op},
it is easy to show that $\widetilde L$ is indeed a coupling operator
of the generator $L$. Note that if $h(x,y)=\sum_{i=1}^n
h_i(x^i,y^i)$, where $h_i\in C_b^2(\R^{d_i} \times \R^{d_i})\,
(1\leq i\leq n)$, then
  \begin{equation}\label{coupling-product-1}
  \widetilde L h(x,y)= \sum_{i=1}^n \Big[\<\gamma^i(x),\nabla_{x^i} h_i(x^i,y^i)\> + \<\gamma^i(y),\nabla_{y^i} h_i(x^i,y^i)\> +\widetilde L_i h_i(x^i,y^i)\Big],
  \end{equation}
where $\widetilde L_i$ is the refined basic coupling operator of the
generator corresponding to the following SDE
  $$d\bar{Y}^i_t= b^i_0(\bar{Y}^i_t)\,dt+dZ^i_t,\quad 1\leq i\leq n.$$

Next we shall present the coupling equation of \eqref{interlevy}
corresponding to the coupling operator $\widetilde L$ given above.
For $1\le i\le n$, let $$\rho_i(x^i,z^i)= \frac{\mu^i_{x^i}(dz^i)}
{\nu_i(dz^i)}\in[0,1]$$ and $\bar N_i(dt,dz^i,du)$ be the Poisson
random measure defined in \eqref{Poisson-random-measure} associated
to $(Z^i_t)_{t\geq 0}$. Consider the equations for $1\le i\le n$
which are similar to \eqref{coup-SDE-2}:
  \begin{equation}\label{coupling-product-1.5}
  \begin{split}
  d Y^i_t&= b^i(Y_t)\,d t\\
  &\quad+ \int_{\R^{d_i}\times [0,1]} \Big[\big(z^i+(U^i_{t-})_{\kappa^i}\big) \I_{\{u\le \frac12 \rho_i((-U^i_{t-})_{\kappa^i},z^i)\}} \\
  &\hskip35pt +\big(z^i+(-U^i_{t-})_{\kappa^i}\big) \I_{\{\frac12 \rho_i((-U^i_{t-})_{\kappa^i},z^i)< u\le \frac12 [\rho_i((-U^i_{t-})_{\kappa^i},z^i) +\rho_i((U^i_{t-})_{\kappa^i},z^i)]\}}\\
  &\hskip35pt  + z^i \I_{\{\frac12 [\rho_i((-U^i_{t-})_{\kappa^i},z^i) +\rho_i((U^i_{t-})_{\kappa^i},z^i)]< u\le 1\}}\Big]  \bar{N}_i(dt,dz^i,du)\\
  &\quad -\! \int_{\R^{d_i}\times [0,1]} \!
  \Big[\! \big(z^i \!+\! (U^i_{t-})_{\kappa^i}\big)\! \big(\I_{\{|z^i+(U^i_{t-})_{\kappa^i}|\le 1\}} \! -\!\I_{\{|z^i|\le 1\}}\big)\! \I_{\{u\le \frac12 \rho_i((-U^i_{t-})_{\kappa^i},z^i)\}}\\
  &\hskip35pt +\big(z^i+ (-U^i_{t-})_{\kappa^i}\big)\big(\I_{\{|z^i+(-U^i_{t-})_{\kappa^i}|\le 1\}} -\I_{\{|z^i|\le 1\}}\big)\\
  &\hskip45pt \times\! \I_{\{\frac12 \rho_i((-U^i_{t-})_{\kappa^i},z^i)< u\le
  \frac12 [\rho_i((-U^i_{t-})_{\kappa^i},z^i) + \rho_i((U^i_{t-})_{\kappa^i},z^i)]\}}\Big] \,\nu_i(dz^i)\,du\,dt,
  \end{split}
  \end{equation}
where $U^i_t=X^i_t-Y^i_t$. Similar to the discussions in Subsection
\ref{cou:pro}, the above equation can be simplified as
  \begin{equation}\label{coupling-product-2}
  \begin{split}
  d Y^i_t&= b^i(Y_t)\,d t+ dZ^i_t + \int_{\R^{d_i}\times [0,1]} V^i_{t-}(z^i,u) \bar{N}_i(dt,dz^i,du),\quad Y^i_0=y^i,
  \end{split}
  \end{equation}
where
  $$\aligned
  V^i_{t}(z^i,u)&=(U^i_{t})_{\kappa} \big[ \I_{\{u\le \frac12 \rho_i((-U^i_{t})_{\kappa^i},z^i)\}} - \I_{\{\frac12 \rho_i((-U^i_{t})_{\kappa^i},z^i) < u\le \frac12 [\rho_i((-U^i_{t})_{\kappa^i},z^i) + \rho_i((U^i_{t})_{\kappa^i},z^i)] \}}\big].
  \endaligned$$
The following result is analogous to those in Propositions \ref{2-prop-1} and \ref{3-prop-2}.

\begin{lemma}\label{coupling-product-lem}
The systems of equations \eqref{interlevy} and \eqref{coupling-product-2} have a unique strong solution $(X_t,Y_t)_{t\geq 0}$ which is the coupling process associated to the coupling operator $\widetilde L$ above.
\end{lemma}

\begin{proof}
Recall that we assume the system of equations \eqref{interlevy} has a non-explosive and pathwise unique strong solution $(X_t)_{t\geq 0}$. As in Proposition \ref{2-prop-1}, we show that the sample paths of $(Y_t)_{t\geq 0}$ can be obtained by modifying those of the solution of the following equation:
  \begin{equation}\label{coupling-product-lem.1}
  d \tilde Y^i_t=b^i(\tilde Y_t)\,dt+ dZ^i_t,\quad \tilde Y^i_0=y^i,\quad 1\leq i\leq n.
  \end{equation}

Without loss of generality, we assume $n=2$. Denote by $Y^{(1)}_t$ the solution to \eqref{coupling-product-lem.1}.
For $i=1,2$, take independent random variables $\zeta_1^i$ and $\zeta_2^i$ which are uniformly distributed on $[0,1]$. Define the stopping times
  \begin{align*}
  \sigma_1^i=\inf\Big\{t>0: &\,
  \zeta_1^i\leq \frac12\Big[\rho_i\big(\big(Y^{(1),i}_t-X^i_t\big)_{\kappa^i}, \Delta Z^i_t\big)+ \rho_i\big(\big(X^i_t- Y^{(1),i}_t\big)_{\kappa^i}, \Delta Z^i_t\big)\Big]\Big\}
  \end{align*}
and
  \begin{align*}
  \sigma_2^i=\inf\!\Big\{t>\! \sigma_1^1\wedge \sigma_1^2: &\,
  \zeta_2^i\leq \frac12\Big[\rho_i\big(\big(Y^{(2),i}_t-X^i_t\big)_{\kappa^i}, \Delta Z^i_t\big)+ \rho_i\big(\big(X^i_t- Y^{(2),i}_t\big)_{\kappa^i}, \Delta Z^i_t\big)\Big]\Big\}  \end{align*}
for $i=1,2$. Then, using the equations \eqref{interlevy} and \eqref{coupling-product-1.5}, we can follow the proof of Proposition \ref{2-prop-1} with $\sigma_j$ replaced by $\sigma_j^1\wedge \sigma_j^2$ for $j=1,2$ respectively,  and also the argument of Proposition \ref{3-prop-2} to show the desired assertion.
\end{proof}

We can now present the

\begin{proof}[Proof of Proposition $\ref{P:prod}$]
By \eqref{coupling-product-1}, we have
  \begin{align*}
  \widetilde L d_{\psi,w}(|x-y|)&=\sum_{i=1}^n w_i\widetilde L_{i} \psi_i(|x^i-y^i|)+\sum_{i=1}^n w_i\psi'_i(|x^i-y^i|) \frac{\langle \gamma^i(x)-\gamma^i(y), x^i-y^i\rangle}{|x^i-y^i|}\\
  &\le \sum_{i=1}^n w_i\widetilde L_{i} \psi_i(|x^i-y^i|)+\sum_{i=1}^n w_i\psi'_i(0) |\gamma^i(x)-\gamma^i(y)|,
  \end{align*}
where in the inequality above we have used the fact that $\psi_i'(r)\le \psi_i'(0)$ for all $r\ge0$. Next, \eqref{con-remin} implies
  \begin{align*}
  \widetilde L d_{\psi,w}(|x-y|)&\le -\sum_{i=1}^n \lambda_i w_i\psi_i(|x^i-y^i|)+ \sum_{i=1}^n w_i \varepsilon_i\psi_i(|x^i-y^i|)\\
  &\le -\lambda\sum_{i=1}^n  w_i\psi_i(|x^i-y^i|)=-\lambda d_{\psi, w}(|x-y|).
  \end{align*}
The inequality above along with Theorem \ref{p-w} yields the first desired assertion. The second assertion just follows from the first one and the definition of $d_{\psi,w}$.
\end{proof}

In many applications, the perturbation $\gamma=(\gamma^1, \ldots, \gamma^n)$ satisfies an $l^1$-Lipschitz condition
$$\sum_{i=1}^n|\gamma^i(x)-\gamma^i(y)|\le \lambda\sum_{i=1}^n|x^i-y^i|,\quad x,y\in\R^d.$$ Using Propositions \ref{P:per} and \ref{P:prod},
we can easily get exponential contractivity in terms of
$W_{d_{\psi,1}}$ on product spaces and the corresponding
perturbation assertions of product models with respect to $W_{l^1}$.
These can be applied to the following system for interacting SDEs
with jumps
  $$dX_t^i=-\frac{1}{2}\nabla U(X_t^i)\,dt-\sum_{j=1}^na_{ij}\nabla V(X_t^i-X_t^j)\,dt+ dZ_t^i,\quad 1\le i\le n,$$
where $(Z_t^i)_{t\ge0}$ $(i=1,\ldots,n)$ are independent L\'evy processes in $\R^{k}$, $U\in C^2(\R^k)$ is strictly convex outside a given ball, the interaction potential $V$ is in $C^2(\R^k)$ with bounded second derivatives, and $a_{i,j}$, $1\le i,j\le n$, are finite real constants.

\section{Appendix: Properties of $\nu\wedge(\delta_x \ast \nu)$}

Recall that for any two finite measures $\mu_1$ and $\mu_2$ on $(\R^d,\Bb(\R^d))$,
  $$\mu_1\wedge\mu_2:=\mu_1-(\mu_1-\mu_2)^+,$$
where $(\mu_1-\mu_2)^{\pm}$ refers to the Jordan--Hahn decomposition of the signed measure $\mu_1-\mu_2$. In particular, $\mu_1\wedge\mu_2=\mu_2\wedge\mu_1$ and for any $A\in\mathscr{B}(\R^d)$,
  $$(\mu_1-\mu_2)^{+}(A)=\sup\{\mu_1(B)-\mu_2(B): B\subset A, B\in \mathscr{B}(\R^d)\}.$$
Thus, for any $A\in \mathscr{B}(\R^d)$,
  \begin{align*}
  (\mu_1\wedge\mu_2)(A) =&\ \mu_1(A)-\sup\{\mu_1(B)-\mu_2(B): B\subset A, B\in \mathscr{B}(\R^d)\}\\
  =&\inf\{\mu_1(A\setminus B)+\mu_2(B): B\subset A,  B\in \mathscr{B}(\R^d)\}.
  \end{align*}
The expression above can be extended to any measures (not necessarily finite) $\mu_1$ and $\mu_2$. From this, we can easily claim that

\begin{lemma}\label{L:mea}
Let $\mu_1$ and $\mu_2$ be two measures on $(\R^d,\Bb(\R^d))$. For any $x\in\R^d$, it holds that
  $$\delta_x\ast (\mu_1\wedge\mu_2)= (\delta_x\ast \mu_1)\wedge (\delta_x\ast \mu_2).$$
\end{lemma}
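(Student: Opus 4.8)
The plan is to exploit the fact that convolution with the Dirac mass $\delta_x$ is nothing but the pushforward of a measure under the translation map $\tau_x\colon\R^d\to\R^d$, $\tau_x(y)=y+x$; that is, $(\delta_x*\mu)(A)=\mu(A-x)=\mu(\tau_x^{-1}(A))$ for every $A\in\mathscr{B}(\R^d)$ and every bounded measure $\mu$. Since $\tau_x$ is a measurable bijection of $\R^d$ with measurable inverse $\tau_{-x}$, the map $\mu\mapsto\delta_x*\mu$ is a linear, order-preserving bijection on the cone of bounded measures, and it is this structural fact that makes the identity essentially tautological once unwound.

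First I would recall the representation of the minimum through the Jordan--Hahn decomposition stated just before the lemma, namely $\mu_1\wedge\mu_2=\mu_1-(\mu_1-\mu_2)^+$, where for any $A\in\mathscr{B}(\R^d)$
$$(\mu_1-\mu_2)^+(A)=\sup\{\mu_1(B)-\mu_2(B):B\subset A,\ B\in\mathscr{B}(\R^d)\}.$$
By linearity of the pushforward, $\delta_x*(\mu_1-\mu_2)=(\delta_x*\mu_1)-(\delta_x*\mu_2)$. The key step is then to verify that $\delta_x*\big((\mu_1-\mu_2)^+\big)=\big((\delta_x*\mu_1)-(\delta_x*\mu_2)\big)^+$. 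To this end, fix $A\in\mathscr{B}(\R^d)$ and compute
$$\delta_x*\big((\mu_1-\mu_2)^+\big)(A)=(\mu_1-\mu_2)^+(A-x)=\sup\{\mu_1(B)-\mu_2(B):B\subset A-x\}.$$
Writing $B=B'-x$ and using that $B'\mapsto B'-x$ is a bijection between the Borel subsets of $A$ and those of $A-x$, the right-hand side equals $\sup\{\mu_1(B'-x)-\mu_2(B'-x):B'\subset A\}=\sup\{(\delta_x*\mu_1)(B')-(\delta_x*\mu_2)(B'):B'\subset A\}$, which is precisely $\big((\delta_x*\mu_1)-(\delta_x*\mu_2)\big)^+(A)$.

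Combining the two observations gives
$$\delta_x*(\mu_1\wedge\mu_2)=\delta_x*\mu_1-\delta_x*\big((\mu_1-\mu_2)^+\big)=(\delta_x*\mu_1)-\big((\delta_x*\mu_1)-(\delta_x*\mu_2)\big)^+=(\delta_x*\mu_1)\wedge(\delta_x*\mu_2),$$
which is the assertion. The only point needing a little care is the legitimacy of the change of variables in the supremum, i.e.\ that translation carries the Borel $\sigma$-algebra onto itself and preserves inclusions; this is standard, and alternatively one may run the whole argument through a measurable Hahn set $P$ for the signed measure $\mu_1-\mu_2$, observing that $\tau_x(P)=P+x$ is then a Hahn set for $(\delta_x*\mu_1)-(\delta_x*\mu_2)$. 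I do not expect a genuine obstacle here: the lemma is in essence a bookkeeping statement about how the lattice operations on measures behave under a measurable bijection.
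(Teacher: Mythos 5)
Your proof is correct and follows essentially the same route as the paper: both reduce the claim via $\mu_1\wedge\mu_2=\mu_1-(\mu_1-\mu_2)^+$ to the identity $\delta_x*\big((\mu_1-\mu_2)^+\big)=\big(\delta_x*\mu_1-\delta_x*\mu_2\big)^+$, verified through the supremum formula and the change of variables $B\mapsto B-x$. The paper merely reads the same computation in the opposite direction, so there is no substantive difference.
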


As a consequence of Lemma \ref{L:mea}, we have the following statement.

\begin{corollary}\label{C:mea}
Let $\nu$ be a L\'{e}vy measure on $(\R^d,\Bb(\R^d))$. Then, for any $x\in\R^d$,
  $$\delta_x\ast (\nu\wedge(\delta_{-x}\ast \nu))=\nu\wedge(\delta_x\ast \nu)$$
and so
  $$[\nu\wedge(\delta_{-x}\ast \nu)](\R^d)=[\nu\wedge(\delta_x\ast \nu)](\R^d).$$
\end{corollary}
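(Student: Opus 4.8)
The plan is to reduce everything to Lemma~\ref{L:mea} by way of the truncations $\nu_n(dz)=\I_{\{|z|\ge 1/n\}}\nu(dz)$ already introduced above. Fix $x\in\R^d$; the case $x=0$ is trivial since $\delta_0$ is the identity for convolution, so assume $x\neq 0$. For each $n\ge 1$, both $\nu_n$ and $\delta_{-x}*\nu_n$ are finite measures (the latter being a translate of a finite measure), so Lemma~\ref{L:mea} applies with $\mu_1=\nu_n$ and $\mu_2=\delta_{-x}*\nu_n$, giving
$$\delta_x*\big(\nu_n\wedge(\delta_{-x}*\nu_n)\big)=(\delta_x*\nu_n)\wedge(\delta_x*\delta_{-x}*\nu_n)=(\delta_x*\nu_n)\wedge\nu_n,$$
where in the last step I use associativity of convolution of finite measures together with $\delta_x*\delta_{-x}=\delta_0$.

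Next I would pass to the limit $n\to\infty$ on both sides. By the construction recalled just above Corollary~\ref{C:mea} (applied to both $x$ and $-x$), one has $\nu_n\wedge(\delta_{-x}*\nu_n)\uparrow\nu\wedge(\delta_{-x}*\nu)$ and $\nu_n\wedge(\delta_x*\nu_n)\uparrow\nu\wedge(\delta_x*\nu)$ as $n\to\infty$. Since convolution with $\delta_x$ is merely the translation $A\mapsto\mu(A-x)$, it commutes with increasing limits of measures: if $\mu_n\uparrow\mu$ then $\delta_x*\mu_n\uparrow\delta_x*\mu$. Hence the left-hand side of the displayed identity converges to $\delta_x*(\nu\wedge(\delta_{-x}*\nu))$ and the right-hand side to $\nu\wedge(\delta_x*\nu)$, which is the first assertion. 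For the second assertion I would simply evaluate the first at $\R^d$: for any measure $\mu$ on $(\R^d,\mathscr{B}(\R^d))$ one has $(\delta_x*\mu)(\R^d)=\mu(\R^d-x)=\mu(\R^d)$, so taking $\mu=\nu\wedge(\delta_{-x}*\nu)$ yields $\nu\wedge(\delta_{-x}*\nu)(\R^d)=\delta_x*(\nu\wedge(\delta_{-x}*\nu))(\R^d)=\nu\wedge(\delta_x*\nu)(\R^d)$.

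I do not expect any genuine obstacle here; the only point requiring a moment's care is that $\nu$ itself may be infinite, which is precisely why the argument is routed through the finite truncations $\nu_n$ before taking the monotone limit. Once that bookkeeping is in place, everything follows from Lemma~\ref{L:mea} and the elementary fact that translation of a measure is continuous along increasing sequences and preserves total mass.
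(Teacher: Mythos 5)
Your proposal is correct and follows essentially the same route as the paper: truncate to the finite measures $\nu_n$, apply Lemma~\ref{L:mea} to the pair $\nu_n$, $\delta_{-x}*\nu_n$, let $n\to\infty$ using the defining monotone limit $\nu_n\wedge(\delta_{\pm x}*\nu_n)\uparrow\nu\wedge(\delta_{\pm x}*\nu)$, and then evaluate at $\R^d$ via translation invariance of total mass. The only difference is that you spell out the limiting step (translation commuting with increasing limits) which the paper leaves implicit.
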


\begin{proof}
 The first assertion follows from Lemma \ref{L:mea}. Moreover,
  \[ [\nu\wedge(\delta_{-x}\ast \nu)](\R^d)= [\delta_x\ast (\nu\wedge(\delta_{-x}\ast \nu))](\R^d)=[\nu\wedge(\delta_{x}\ast \nu)](\R^d). \qedhere \]
\end{proof}

We can also justify the following example.

\begin{example} \label{exnu}\it
Let $\nu(dz)=g(z)\,dz$ for some nonnegative measurable function $g$. Then for any $x\in\R^d$,
  $$\nu\wedge(\delta_{x}\ast \nu)(dz)=(g(z)\wedge g(z-x))\,dz.$$
\end{example}

Before proving the main result of this part, we present the following simple result.

\begin{lemma}\label{app-lem}
Assume that $\mu_1,\mu_2$ and $\nu$ are $\sigma$-finite measures on $(\R^d, \mathscr{B}(\R^d))$. If $\mu_1$ and $\mu_2$ are singular to each other, then
  $$(\mu_1 +\mu_2)\wedge \nu = \mu_1 \wedge \nu +\mu_2\wedge \nu.$$
\end{lemma}

\begin{proof}
There exists a Borel set $S\subset \R^d$ such that $\mu_1$ and
$\mu_2$ are supported on  $S$ and $S^c= \R^d\setminus S$,
respectively. Fix a set $A\in \mathscr{B}(\R^d)$. Then, for any
$B\in \mathscr{B}(\R^d)$ with $B\subset A$,
  \begin{align*}
  \mu_1(B) + \mu_2(B) + \nu(A\setminus B)
  =\ &\mu_1(B\cap S) + \mu_2(B\cap S^c) \\
  &+ \nu\big((A\cap S)\setminus (B\cap S)\big) + \nu\big((A\cap S^c)\setminus (B\cap S^c)\big).
  \end{align*}
Therefore,
  \begin{align*}
  \ [(\mu_1 +\mu_2)\wedge \nu](A) = &\inf\big\{\mu_1(B) + \mu_2(B) + \nu(A\setminus B): B\subset A,  B\in \mathscr{B}(\R^d)\big\} \\
  = & \inf\big\{\mu_1(B_1) + \mu_2(B_2) + \nu\big((A\cap S)\setminus B_1\big) + \nu\big((A\cap S^c)\setminus B_2\big): \\ &\hskip100pt B_1\subset A\cap S, B_2\subset A\cap S^c,  B_1, B_2\in \mathscr{B}(\R^d)\big\} \\
  = & \inf\big\{\mu_1(B_1) + \nu\big((A\cap S)\setminus B_1\big) : B_1\subset A\cap S, B_1\in \mathscr{B}(\R^d)\big\} \\
  &+ \inf\big\{\mu_2(B_2) + \nu\big((A\cap S^c)\setminus B_2\big) : B_2\subset A\cap S^c, B_2\in \mathscr{B}(\R^d)\big\} \\
  =& \ (\mu_1 \wedge \nu)(A\cap S) + (\mu_2 \wedge \nu)(A\cap S^c)\\
  =&\ (\mu_1 \wedge \nu)(A) + (\mu_2 \wedge \nu)(A),
  \end{align*}
where the last step is due to the facts that $\mu_1 \wedge \nu$ and
$\mu_2 \wedge \nu$ are supported respectively on $S$ and $S^c$.
\end{proof}

Finally we can prove

\begin{proposition}\label{ppp}
Assume that the L\'evy measure $\nu$ satisfies \eqref{th10} for some constant $\kappa_0>0$. Then, there is a nonnegative measurable function $\rho$ on $\R^d$ such that
  $$\nu(dz)\ge \rho(z)\,dz$$
and
  $$\inf_{x\in\R^d, |x|\le \kappa_0} \int_{\R^d}[\rho(z)\wedge \rho(z+x)]\,dz>0.$$
\end{proposition}

\begin{proof}
(1) We first prove that $\nu$ has a non-zero absolutely continuous part with respect to the Lebesgue measure. For any $x\in\R^d$, we have $\nu\ge \nu \wedge (\delta_x \ast \nu)$, so
  $$\nu\ge \int_{\overline{B(0,\kappa_0)}} [\nu \wedge (\delta_x \ast \nu)]\,U(dx)=:\tilde \nu,$$
where $U(dx)$ is the uniform distribution on the closed ball $\overline{B(0,\kappa_0)}$. Since, by \eqref{th10},
  $$\tilde \nu(\R^d)=\int_{\overline{B(0,\kappa_0)}}[\nu \wedge (\delta_x \ast \nu)](\R^d)\,U(dx)\ge \inf_{|x|\le \kappa_0}[\nu \wedge (\delta_x \ast \nu)](\R^d)>0,$$
$\tilde \nu$ is a non-zero measure on $(\R^d,\mathscr{B}(\R^d))$. For any $A\in \mathscr{B}(\R^d)$, we denote by ${\rm Leb}(A)$ the Lebesgue measure of $A$. If ${\rm Leb}(A)=0$, then
  $$\tilde \nu(A)=\int_{\overline{B(0,\kappa_0)}}[\nu \wedge (\delta_x \ast\nu)](A)\,U(dx)\le \int_{\overline{B(0,\kappa_0)}}(\delta_x \ast\nu)(A)\,U(dx)=\nu\ast U(A)=0.$$
Combining both conclusions above, we prove the desired assertion.

(2) We deduce from step (1) that there is a nonnegative measurable function $\rho$ on $\R^d$ such that
  $$\nu(dz)= \rho(z)\,dz+\nu^{\rm s}(dz),$$
where $\nu^{\rm s}$ is the singular part with respect to the Lebesgue measure. Suppose that
  $${\rm Leb}\big(\{x\in \overline{B(0,\kappa_0)}:[\nu^{\rm s} \wedge (\delta_x \ast \nu^{\rm s})](\R^d)>0\} \big)>0.$$
Following the same argument above with the measure $U$ replaced by the uniform distribution on the set $\{x\in \overline{B(0,\kappa_0)}:[\nu^{\rm s} \wedge (\delta_x \ast \nu^{\rm s})](\R^d)>0 \}$, we can claim that $\nu^{\rm s}$ has a non-zero absolutely continuous part with respect to the Lebesgue measure, which is a contradiction. Therefore,
  \begin{equation}\label{ppps1}
  \essinf\limits_{{x\in\R^d, |x|\le \kappa_0}}[\nu^{\rm s} \wedge (\delta_x \ast\nu^{\rm s})](\R^d)=0.
  \end{equation}

Next, we denote by $\nu^{\rm ac}(dz) = \rho(z)\,dz$ the absolutely continuous part of $\nu$. It is clear that $(\delta_x \ast \nu)^{\rm ac} = \delta_x \ast \nu^{\rm ac}$ and $(\delta_x \ast \nu)^{\rm s} = \delta_x \ast \nu^{\rm s}.$ These properties along with the shift invariance of the Lebesgue measure yield that $\nu^{\rm s}\wedge (\delta_x \ast \nu^{\rm ac})=0$ and $\nu^{\rm ac}\wedge (\delta_x \ast \nu^{\rm s})=0$. By Lemma \ref{app-lem},
  \begin{equation}\label{ppps2}
  \nu\wedge(\delta_x\ast \nu) =\nu^{\rm ac} \wedge (\delta_x\ast \nu^{\rm ac})+\nu^{{\rm s}} \wedge (\delta_x\ast \nu^{{\rm s}}).
  \end{equation}
Therefore, we deduce from \eqref{th10}, \eqref{ppps1}, \eqref{ppps2} and Example \ref{exnu}
that
  \begin{align*}
  \essinf\limits_{{x\in\R^d, |x|\le \kappa_0}} [\nu\wedge (\delta_x\ast \nu)](\R^d)=&\essinf\limits_{{x\in\R^d, |x|\le \kappa_0}} [\nu^{\rm ac} \wedge (\delta_x\ast \nu^{\rm ac})](\R^d) \\
  = &\essinf\limits_{{x\in\R^d, |x|\le \kappa_0}} \int_{\R^d}[\rho(z)\wedge \rho(z+x)]\,dz>0.
  \end{align*}

To conclude the last assertion, it suffices to prove that
  \begin{equation}\label{ppp-0}
  \lim_{x\to x_0} \int_{\R^d} \rho(z)\wedge \rho(x+z)\, dz= \int_{\R^d} \rho(z)\wedge \rho(x_0+z)\, dz\quad \mbox{for any } 0<|x_0|\leq \kappa_0.
  \end{equation}
Fix any $\delta\in (0,\kappa_0)$ and $x_0\in \R^d$ such that $\delta\leq |x_0|\leq \kappa_0$. For any $x\in \R^d$ with $|x-x_0|\leq \delta/4$, we have
  $$\aligned
  &\hskip4pt \bigg|\int_{\R^d} \rho(z)\wedge \rho(x+z)\, dz- \int_{\R^d} \rho(z)\wedge \rho(x_0+z)\, dz\bigg|\\
  &\leq \bigg|\int_{\{|z|\leq \frac\delta 2\}}\big[ \rho(z)\wedge \rho(x+z)- \rho(z)\wedge \rho(x_0+z)\big]\, dz\bigg|\\
  &\hskip14pt + \bigg|\int_{\{|z|> \frac\delta 2\}}\big[ \rho(z)\wedge \rho(x+z)- \rho(z)\wedge \rho(x_0+z)\big]\, dz\bigg|\\
  &=: I_1+I_2.\endaligned$$

We first estimate $I_1$. Using the simple formulae $a\wedge b=\frac12(a+b-|a-b|)$ and $\big||a|-|b|\big|\leq |a-b|$, we obtain
  $$|\rho(z)\wedge \rho(x+z)- \rho(z)\wedge \rho(x_0+z)|\leq |\rho(x+z)-\rho(x_0+z)|.$$
Hence
  $$\aligned
  I_1& \leq \int_{\{|z|\leq \frac\delta 2\}} |\rho(x+z)-\rho(x_0+z)| \, dz =\int_{\{|y-x_0|\leq \frac\delta 2\}} |\rho(x-x_0+y)-\rho(y)| \, dy\\
  &\leq \int_{\{|y|\geq \frac\delta 2 \}} |\rho(x-x_0+y)-\rho(y)| \, dy.
  \endaligned$$
Since the function $\R^d\ni y\mapsto \I_{\{|y|\ge \frac\delta 4 \}} \rho(y)$ is integrable, for any $\varepsilon>0$, there exists a function $\varphi:=\varphi_\varepsilon\in C_c(\R^d)$ such that
  $$\int_{\R^d}\big|\I_{\{|y|\ge \frac\delta 4 \}} \rho(y) -\varphi(y)\big|\,d y<\varepsilon.$$
Note that
  $$\aligned
 \int_{\{|y|\geq \frac\delta 2 \}} |\rho(x-x_0+y)-\varphi(x-x_0+y)| \, dy&=\int_{\{|z-(x-x_0)|\geq \frac\delta 2 \}} |\rho(z)-\varphi(z)| \, dz \\
 &\leq \int_{\{|z|\geq \frac\delta 4 \}} |\rho(z)-\varphi(z)| \, dz,
  \endaligned$$
where the last inequality is due to $|x-x_0|\leq \delta/4$. Thus,
  $$\aligned
  I_1&\leq \int_{\{|y|\geq \frac\delta 2 \}} |\rho(x-x_0+y)-\varphi(x-x_0+y)| \, dy\\
  &\quad +\int_{\{|y|\geq \frac\delta 2 \}} |\varphi(x-x_0+y)-\varphi(y)| \, dy +\int_{\{|y|\geq \frac\delta 2 \}} |\varphi(y)-\rho(y)| \, dy\\
  &\leq 2\int_{\R^d}\big|\I_{\{|y|\ge \frac\delta 4 \}} \rho(y) -\varphi(y)\big|\,d y +\int_{\{|y|\geq \frac\delta 2 \}} |\varphi(x-x_0+y)-\varphi(y)| \, dy\\
  &< 2\varepsilon +  \int_{\R^d} |\varphi(x-x_0+y)-\varphi(y)| \, dy.
  \endaligned$$
Therefore,
  \begin{equation}\label{ppp-1}
  \limsup_{x\to x_0} I_1 \leq 2\varepsilon.
  \end{equation}

Next,
  $$I_2=\bigg|\int_{\{|y-x|> \frac\delta 2\}} \rho(y-x)\wedge \rho(y)\, dy - \int_{\{|y-x_0|> \frac\delta 2\}} \rho(y-x_0)\wedge \rho(y)\, dy\bigg|.$$
To simplify the notations, denote by $B_1=\{|y-x|> \frac\delta 2\}$ and $B_2=\{|y-x_0|> \frac\delta 2\}$. Then
  $$\aligned
  I_2&\leq \int_{B_1\setminus B_2} \rho(y-x)\wedge \rho(y)\, dy + \int_{B_2\setminus B_1} \rho(y-x_0)\wedge \rho(y)\, dy\\
  &\hskip14pt + \bigg|\int_{B_1\cap B_2}\big[ \rho(y-x)\wedge \rho(y)- \rho(y-x_0)\wedge \rho(y)\big]\, dy\bigg|\\
  &=:\! I_{2,1}+I_{2,2}+I_{2,3}.
  \endaligned$$
It is easy to see that
  $$I_{2,1} \leq \int_{B_1\setminus B_2} \rho(y-x)\, dy= \int_{\{|z|>\frac\delta 2 \}\cap \{|x-x_0+z|\leq \frac\delta 2 \}} \rho(z)\, dz\to 0$$
as $x\to x_0$, since $\rho(z) \I_{\{|z|>\frac\delta 2 \}}$ is integrable. Similarly,  $I_{2,2}\to 0$ as $x\to x_0$. Finally, analogous to the treatment of $I_1$, we have
  $$\aligned
  I_{2,3}&\leq \int_{B_1\cap B_2} |\rho(y-x) - \rho(y-x_0)|\, dy =\int_{\{|z-x+x_0|>\frac\delta 2\}\cap \{|z|>\frac\delta 2 \}} |\rho(z-x+x_0)-\rho(z)|\, dz\\
  &\leq \int_{\{|z|>\frac\delta 2 \}} |\rho(z-x+x_0)-\rho(z)|\, dz.
  \endaligned$$
In the same way as for \eqref{ppp-1}, we have $\limsup_{x\to x_0} I_{2,3}\leq 2 \varepsilon$, which, combined with the assertions for $I_{2,1}$ and $I_{2,2}$, implies
  $$\limsup_{x\to x_0} I_{2}\leq 2 \varepsilon.$$
This together with \eqref{ppp-1} again leads to \eqref{ppp-0}, since $\varepsilon>0$ is arbitrary.
\end{proof}

\medskip

\noindent \textbf{Acknowledgements.} The authors would like to thank
Professors Andreas Eberle and Feng-Yu Wang for their valuable comments  on an early version of this paper.
They are also very grateful to the two referees for careful reading of the manuscript and numerous suggestions of corrections;
many of them are adopted by us in the discussions of main results in Section \ref{section1} and in the comments on the construction of our coupling in Section \ref{sectionse2}.
The research of Dejun Luo is supported by the National Natural Science
Foundation of China (Nos. 11571347, 11688101), the Seven Main
Directions (Y129161ZZ1) and the Special Talent Program of the
Academy of Mathematics and Systems Science, Chinese Academy of
Sciences.   The research of Jian Wang is supported by National
Natural Science Foundation of China (No.\ 11522106), Fok Ying Tung
Education Foundation (No.\ 151002), and the Program for Probability and Statistics: Theory and Application (No. IRTL1704).

\end{document}